\date{March 7, 2017}
\let\oldsection\section
\renewcommand\section{\setcounter{equation}{0}\oldsection}
\newtheorem{corollary}{Corollary}[section]
\newtheorem{theorem}{Theorem}[section]
\newtheorem{lemma}{Lemma}[section]
\newtheorem{proposition}{Proposition}[section]
\newtheorem{definition}{Definition}[section]
\newtheorem{remark}{Remark}[section]
\begin{document}

\title[Primitive equations horizontal viscosity vertical diffusivity]{Global well-posedness of the 3D primitive equations with horizontal viscosity and vertical diffusivity}

\author{Chongsheng~Cao}
\address[Chongsheng~Cao]{Department of Mathematics, Florida International University, University Park, Miami, FL 33199, USA}
\email{caoc@fiu.edu}

\author{Jinkai~Li}
\address[Jinkai~Li]{Department of Mathematics, The Chinese University of Hong Kong, Shatin, N.T., Hong Kong}
\email{jklimath@gmail.com}

\author{Edriss~S.~Titi}
\address[Edriss~S.~Titi]{
Department of Mathematics, Texas A\&M University, 3368 TAMU, College Station,
TX 77843-3368, USA. ALSO, Department of Computer Science and Applied Mathematics, Weizmann Institute of Science, Rehovot 76100, Israel.}
\email{titi@math.tamu.edu and edriss.titi@weizmann.ac.il}

\keywords{global well-posedness; anisotropic primitive equations; horizontal eddy viscosity; vertical eddy diffusivity; logarithmic Sobolev embedding inequality; logarithmic Grownwall inequality.}
\subjclass[2010]{26D10, 35Q35, 35Q86, 76D03, 76D05, 86A05, 86A10.}


\begin{abstract}
In this paper, we consider the 3D primitive equations of oceanic and atmospheric
dynamics with only horizontal eddy viscosities in the horizontal
momentum equations and only
vertical diffusivity in the temperature equation. Global well-posedness of strong solutions is established for any initial data such that the initial horizontal velocity $v_0\in H^2(\Omega)$ and the initial temperature $T_0\in H^1(\Omega)\cap L^\infty(\Omega)$ with $\nabla_HT_0\in L^q(\Omega)$,
for some $q\in(2,\infty)$. Moreover, the strong solutions
enjoy correspondingly more regularities if the initial temperature belongs to $H^2(\Omega)$. The main difficulties are the absence of the
vertical viscosity and the lack of the horizontal diffusivity, which,
interact with each other, thus causing
the ``\,mismatching\," of regularities between the horizontal momentum and temperature equations. To handle this ``mismatching" of regularities, we introduce several auxiliary
functions, i.e., $\eta, \theta, \varphi,$ and $\psi$ in the paper,
which are the horizontal curls or some appropriate combinations of the temperature with the horizontal divergences of the horizontal velocity $v$ or its vertical derivative $\partial_zv$. To overcome
the difficulties caused by the absence of the horizontal diffusivity,
which leads to the requirement of some $L^1_t(W^{1,\infty}_\textbf{x})$-type a priori estimates on $v$, we decompose the velocity into the ``temperature-independent" and temperature-dependent parts
and deal with them in different ways, by using the logarithmic Sobolev inequalities of the Br\'ezis-Gallouet-Wainger and Beale-Kato-Majda types, respectively. Specifically, a logarithmic Sobolev inequality of the limiting type, introduced in our previous work \cite{CAOLITITI3}, is used, and a new logarithmic type Gronwall inequality is exploited.
\end{abstract}

\maketitle

\tableofcontents
\allowdisplaybreaks

\section{Introduction}

\label{sec1}
The incompressible primitive equations form a fundamental block in models of oceanic and atmospheric
dynamics, see, e.g., the books Lewandowski \cite{LEWAN}, Majda \cite{MAJDA}, Pedlosky \cite{PED}, Vallis \cite{VALLIS}, Washington--Parkinson \cite{WP}, and Zeng \cite{ZENG}.
The primitive equations are derived from the Navier-Stokes equations by applying the Boussinesq and
hydrostatic approximations. The hydrostatic approximation is based on the fact that the
vertical scale of the the ocean and atmosphere is much smaller than the horizontal ones, and its mathematical justification, by taking
small aspect ratio limit, was carried out by Az\'erad--Guill\'en \cite{AZGU} in the framework of weak solutions and recently by Li--Titi \cite{LITITIHYDRO} in the framework of strong solutions; moreover, the strong convergence rates were also obtained in \cite{LITITIHYDRO}.
In the oceanic and atmospheric dynamics, due to the strong horizontal turbulent mixing, the horizontal viscosity is much stronger than the vertical viscosity and the vertical viscosity is very weak and often neglected.

In this paper, we consider the following incompressible
primitive equations, which have only horizontal viscosities and vertical diffusivity
\begin{eqnarray}
&\partial_tv+(v\cdot\nabla_H)v+w\partial_zv+\nabla_Hp-\Delta_H v+f_0\overrightarrow{k}\times
v=0,\label{1.1}\\
&\partial_zp+T=0,\label{1.2}\\
&\nabla_H\cdot v+\partial_zw=0,\label{1.3}\\
&\partial_tT+v\cdot\nabla_H T+w\partial_zT-\partial_z^2T=0,\label{1.4}
\end{eqnarray}
where the horizontal velocity $v=(v^1,v^2)$, the vertical velocity $w$, the temperature
$T$ and the pressure $p$ are the unknowns, and $f_0$ is the Coriolis parameter. In this paper, we use the notations $\nabla_H=(\partial_x,\partial_y)$
and $\Delta_H=\partial_x^2+\partial_y^2$ to denote the horizontal gradient and the
horizontal Laplacian, respectively. Here, the term $\overrightarrow{k}\times v$ is understood as the first two components of the vector product of $\overrightarrow{k}=(0,0,1)$ with $(v^1,v^2,0)$, i.e., $\overrightarrow{k}\times v=(-v^2,v^1)$.

The first systematically mathematical studies of the primitive equations were carried out in 1990s by Lions--Temam--Wang \cite{LTW92A,LTW92B,LTW95}, where they considered the systems with both
full viscosities and full diffusivity, and established the global
existence of weak solutions; however, the uniqueness of
weak solutions is still an open question, even
for the two-dimensional case. Note that this is different from the
incompressible Navier-Stokes equations, as it is well-known that
the weak solutions to the two-dimensional incompressible
Navier-Stokes equations are unique (see, e.g., Constantin--Foias \cite{CONFOINSBOOK}, Ladyzhenskaya \cite{LADYZHENSKAYA} and Temam \cite{TEMNSBOOK}, and even in the framework of the three-dimensional
Navier-Stokes equations, see Bardos et al.\,\cite{BLNNT}). However, we would like to point out that, though the general uniqueness of weak solutions to
the primitive equations is still unknown, some particular cases
have been solved, see \cite{BGMR03,KPRZ,PTZ09,TACHIM,LITITIUNIQ},
and in particular, it is proved in \cite{LITITIUNIQ} that
weak solutions, with bounded initial data, to the primitive equations are unique, as long as the discontinuity of the initial data is
sufficiently small. Remarkably, different from the three-dimensional
Navier-Stokes equations, global existence and uniqueness of strong
solutions to the three-dimensional primitive equations has already
been known since the breakthrough work by Cao--Titi \cite{CAOTITI2}.
This global existence of strong solutions to the primitive equations
were also proved later by Kobelkov \cite{KOB06} and
Kukavica--Ziane\cite{KZ07B}, by using some different approaches,
see also Hieber--Kashiwabara \cite{HIEKAS} and
Hieber--Hussien--Kashiwabara \cite{HIEHUSKAS} for some generalizations
in the $L^p$ settings.

Note that in all the papers mentioned in the previous paragraph, the systems in question are assumed to have full viscosities
in the horizontal momentum equations and full diffusivity in the
temperature equation. As stated in the previous paragraph, the
primitive equations with both full viscosities and full diffusivity
have a unique global strong solution, which is smooth away from
the initial time. However, on the other hand, it has already
been proven that smooth solutions to the inviscid
primitive equations, with or without coupling to the temperature equation, can develop singularities in finite time,
see Cao et al.\,\cite{CINT} and Wong \cite{TKW}. Comparing
these two kind results of the two extreme cases, i.e., global
existence for the primitive equations with both
full viscosities and full diffusivity and blowup in finite time
for the inviscid primitive equations, it is natural
for us to consider the intermediate cases, i.e., the primitive
equations with partial viscosities or partial diffusivity, and
to ask of whether the solutions exist globally in time or blow up
in finite time for these intermediate cases.

There has been several works concerning the mathematical studies on
the primitive equations with partial viscosities or partial
diffusivity. It has been proved by Cao--Titi \cite{CAOTITI3}
and Cao--Li--Titi \cite{CAOLITITI1,CAOLITITI2} that the primitive
equations with full viscosities and with either horizontal
or vertical diffusivity have a unique global strong solution.
It turns out that the vertical viscosity is even
not necessary for the global well-posedness of the primitive
equations. In fact, it was proved by Cao--Li--Titi \cite{CAOLITITI3}
that strong solutions are unique and exit globally in time for
the primitive equations with only horizontal viscosity and only horizontal diffusivity for any initial data in $H^2$ (see Cao--Li--Titi \cite{CAOLITITIH1} for some generalization of the result in \cite{CAOLITITI3}).
We would like to point out that there is a notable difference between the arguments for the primitive
equations with full viscosities and those for the case of only horizontal
viscosity: for the primitive equations with full viscosities, the
a priori $L^\infty(L^q)$ estimate on $v$ for some $q\in(3,\infty)$
is sufficient for establishing higher order estimates, but it is
not the case for the primitive equations with only horizontal
viscosity. In fact, as pointed out in \cite{CAOLITITI3},
due to the absence of the vertical viscosity, in order to obtain
higher order energy estimates, one has in some sense
to get the a priori $L^2(L^\infty)$ estimate on $v$. The idea
used in \cite{CAOLITITI3} to overcome this difficulty is to carry
out the precise growth with respect to $q$ of the $L^q$ norms of $v$
for $q\in[4,\infty)$,
and connect the $L^\infty$ norm of $v$ with such precise growth,
by an $N$-dimensional logarithmic Sobolev embedding inequality,
which states that the $L^\infty$ norm can be dominated by some
appropriate growth in $q$ of estimates for the $L^q$ norms, up to some
logarithmic of the higher order norms.

In this paper, we continue to study the primitive equations with
partial viscosities or partial diffusivity. Recall that the case
with horizontal viscosity and horizontal diffusivity has
been investigated in \cite{CAOLITITI3,CAOLITITIH1}, as a counterpart, we consider in the current paper the case with only horizontal viscosity,
but with vertical diffusivity, i.e., system (\ref{1.1})--(\ref{1.4}).
The aim of this paper is to show that system (\ref{1.1})--(\ref{1.4}), subject to appropriate boundary and initial conditions, is global well-posed.

We consider system (\ref{1.1})--(\ref{1.4}) on the domain $\Omega:=M\times(-h,h)$, with $M=(0,1)\times(0,1)$, and complement it with the following boundary and initial conditions
\begin{eqnarray}
  &v, w, p, T \mbox{ are periodic in }x,y,z,\label{1.5}\\
  &v\mbox{ and }p \mbox{ are even in }z, \quad w\mbox{ and }T\mbox{ are odd in }z,\label{1.6}\\
  &(v, T)|_{t=0}=(v_0,T_0). \label{1.7}
\end{eqnarray}
Note that condition (\ref{1.6}) is preserved by system (\ref{1.1})--(\ref{1.4}), as long as it is satisfied initially. Also, we remark that no initial condition is imposed on $w$. This is because there is no dynamical equation for $w$, and in fact, $w$ is uniquely determined by the incompressibility condition (\ref{1.3}). 

Conspicuously, we observe that the periodic and symmetry boundary conditions
(\ref{1.5})--(\ref{1.6}) on the domain $M\times(-h, h)$ are equivalent to
the  physical boundary conditions of no-permeability  and stress-free at the solid physical boundaries $z=-h$ and $z=0$ in the sub-domain
$M\times(-h,0)$, namely:
\begin{eqnarray}
&  v, w, p, T \mbox{ are periodic in }x\mbox{ and }y,\label{PBC1}\\
&(\partial_zv,w)|_{z=-h,0}=0,\quad T|_{z=-h,0}=0.\label{PBC2}
\end{eqnarray}
This equivalence between the two problems can be easily  achieved by suitable reflections and extensions of the solutions.
More precisely, if $(v, w, p, T)$ is a strong solution (see Definition \ref{def1.1}, below, for the definition of strong solutions) to system
(\ref{1.1})--(\ref{1.4}) on the domain $M\times(-h,h)$, subject to
(\ref{1.5})--(\ref{1.7}), then the
restriction of $(v,w,p,T)$ to the sub-domain $M\times(-h, 0)$ is also
a strong solution to the same system but on
the sub-domain, subject to
(\ref{1.7}) and (\ref{PBC1})--(\ref{PBC2}); and, conversely,
if $(v,w,p,T)$ is a
strong solution to system (\ref{1.1})--(\ref{1.4}) on the sub-domain $M\times(-h,0)$, subject to
(\ref{1.7}) and (\ref{PBC1})--(\ref{PBC2}), then by extending
$v, w, p$ and $T$ to the larger domain $M\times(-h,h)$,
respectively, even, odd, even and odd
with respect to $z$, $(v,w,p,T)$ is also a strong solution to the same
system but on the larger domain, subject to (\ref{1.5})--(\ref{1.7}).

Using equation (\ref{1.2}), the pressure $p$ can be represented by
$$
p(x,y,z,t)=p_s(x,y,t)-\int_{-h}^zT(x,y,\xi,t)d\xi,
$$
for unknown ``surface pressure" $p_s$. Using this representation, system (\ref{1.1})--(\ref{1.4}) can be rewritten as
\begin{eqnarray}
&\partial_tv+(v\cdot\nabla_H)v+w\partial_zv-\Delta_Hv
\nonumber\\
&+f_0\overrightarrow{k}\times v+\nabla_H\left(p_s(x,y,t)-\int_{-h}^zT(x,y,\xi,t)d\xi\right)=0,\label{main1}\\
&\nabla_H\cdot v+\partial_zw=0,\label{main2}\\
&\partial_tT+v\cdot\nabla_HT+w\partial_zT-\partial_z^2T=0.\label{main3}
\end{eqnarray}
Concerning the boundary and initial conditions, we can now drop the boundary
conditions for the pressure from (\ref{1.5})--(\ref{1.7}), since it is hidden
in the above formulation, in other words,
the boundary and initial conditions now read as
\begin{eqnarray}
  &v, w, T \mbox{ are periodic in }x,y,z,\label{bc1}\\
  &v\mbox{ is even in }z, \quad w\mbox{ and }T\mbox{ are odd in }z,\label{bc2}\\
  &(v, T)|_{t=0}=(v_0,T_0). \label{ic}
\end{eqnarray}

By the aid of the periodic boundary condition (\ref{bc1}) and the divergence free condition (\ref{main2}), it is obviously that
\begin{equation}
  \label{dvfree}
  \int_{-h}^h\nabla_H\cdot v(x,y,z,t)dz=0,
\end{equation}
for any $(x,y)\in M$. By the periodic and symmetry conditions (\ref{bc1})
and (\ref{bc2}), one has $w|_{z=-h}=w|_{z=h}=-w|_{z=-h}=0$ and, as a result, using (\ref{main2}), $w$ can be represented in $v$ as
\begin{equation}
  w(x,y,z,t)=-\int_{-h}^z\nabla_H\cdot v(x,y,\xi,t)d\xi. \label{w}
\end{equation}
On the other hand side, (\ref{w}) obviously implies (\ref{main2}) and, furthermore, (\ref{dvfree}) and the conditions for $v$ as stated in (\ref{bc1})--(\ref{bc2}) imply those for $w$ as stated in (\ref{bc1})--(\ref{bc2}).

On account of what we stated in the previous paragraph, with the aid of the expression (\ref{w}), one can replace (\ref{main2}) by (\ref{dvfree}) and drop the conditions for $w$ in (\ref{bc1}) and (\ref{bc2}), without changing the system. In other words, system (\ref{main1})--(\ref{main3}), subject to the boundary and initial conditions (\ref{bc1})--(\ref{ic}), is equivalent to the following system
\begin{eqnarray}
&\partial_tv+(v\cdot\nabla_H)v+w\partial_zv-\Delta_Hv\nonumber\\
&+f_0\overrightarrow{k}\times
v+\nabla_H\left(p_s(x,y,t)-\int_{-h}^zT(x,y,\xi,t)d\xi\right)=0,\label{MAIN1}\\
&\int_{-h}^h\nabla_H\cdot v(x,y,z,t)dz=0,\label{MAIN2}\\
&\partial_tT+v\cdot\nabla_HT+w\partial_zT-\partial_z^2T=0,\label{MAIN3}
\end{eqnarray}
with $w$ given by (\ref{w}),
subject to the boundary and initial conditions
\begin{eqnarray}
  &v, T \mbox{ are periodic in }x,y,z,\label{BC1}\\
  &v\mbox{ and }T\mbox{ are even and odd in }z,\mbox{ respectively},\label{BC2}\\
  &(v, T)|_{t=0}=(v_0,T_0). \label{IC}
\end{eqnarray}

Applying the operator $\text{div}_H$ to equation (\ref{MAIN1}) and
integrating the resulting equation with respect to $z$ over $(-h, h)$,
one can see that $p_s(x,y,t)$ satisfies the following (see Appendix A
for the details)
\begin{equation}
  \label{ps}
  \left\{
  \begin{array}{l}
    -\Delta_Hp_s=\frac{1}{2h}\nabla_H\cdot\int_{-h}^h \left(\nabla_H\cdot(v\otimes v)+f_0\overrightarrow{k}\times v-\int_{-h}^z\nabla_HTd\xi\right)dz,\\
    \int_Mp_s(x,y,t)dxdy=0,\quad p_s\mbox{ is periodic in }x\mbox{ and }y.
  \end{array}
  \right.
\end{equation}
Here the condition $\int_Mp_s(x,y,t)dxdy=0$ is imposed to guarantee the uniqueness of such $p_s$.

Before stating our main results, let's introduce some necessary notations
and give the definitions of strong solutions. Throughout this paper,
for $1\leq q\leq\infty$, we use $L^q(\Omega), L^q(M)$ and $W^{m,q}(\Omega), W^{m,q}(M)$ to denote the standard Lebesgue and Sobolev spaces,
respectively. For $q=2$, we use $H^m$ instead of $W^{m,2}$. For simplicity,
we still use the notations $L^p$ and $H^m$ to denote the $N$-product
spaces $(L^p)^N$ and $(H^m)^N$, respectively. We always use $\|u\|_p$
to denote the $L^p(\Omega)$ norm of $u$, while use $\|f\|_{p,M}$ to denote
the $L^p(M)$ norm of $f$. For shortening the
expressions, we sometimes use $\|(f_1,f_2,\cdots,f_n)\|_X$ to denote
the sum $\sum_{i=1}^n\|f_i\|_X$.

We introduce the following functions which will play crucial roles in this paper
\begin{eqnarray}
  u=\partial_zv, \quad\theta=\nabla_H^\perp\cdot v,\quad\eta=\nabla_H\cdot v+\int_{-h}^zTd\xi-\frac{1}{2h}\int_{-h}^h\int_{-h}^zTd\xi dz,
  \label{E1}
\end{eqnarray}
where $\nabla_H^\perp=(-\partial_y, \partial_x)$. As it will be seen later, these functions are introduced to overcome the ``mismatching" of regularities between the horizontal momentum equations and the temperature equation.

\begin{definition}\label{def1.1}
Given a positive time $\mathcal T$. Let $v_0\in H^2(\Omega)$ and
$T_0\in H^1(\Omega)$, with $\int_{-h}^h\nabla_H\cdot v_0(x,y,z) dz=0$
and $\nabla_HT_0\in L^q(\Omega)$, for some $q\in(2,\infty)$, be two
periodic functions, such that they are even and odd in $z$, respectively.
A pair $(v,T)$ is called a strong solution to system
(\ref{MAIN1})--(\ref{IC}) on $\Omega\times(0,\mathcal T)$ if

(i) $v$ and $T$ are periodic in $x,y,z$, and they are even and odd in $z$, respectively;

(ii) $v$ and $T$ have the regularities
\begin{eqnarray*}
&&v\in L^\infty(0,\mathcal T; H^2(\Omega))\cap C([0,\mathcal T];H^1(\Omega)),\quad\partial_tv\in L^2(0,\mathcal T; H^1(\Omega)),\\
&&T\in L^\infty(0,\mathcal T; H^1(\Omega)\cap L^\infty(\Omega)) \cap C([0,\mathcal T]; L^2(\Omega)), \quad\partial_t T\in L^2(0,\mathcal T; L^2(\Omega)), \\
&&(\nabla_H\partial_zv,\partial_zT)\in L^2(0,\mathcal T; H^1(\Omega)),\quad\nabla_HT\in L^\infty(0,\mathcal T; L^q(\Omega)),\\
&&\eta\in L^2(0,\mathcal T; H^2(\Omega)),\quad \theta\in L^2(0,\mathcal T; H^2(\Omega));
\end{eqnarray*}

(iii) $v$ and $T$ satisfy equations (\ref{MAIN1})--(\ref{MAIN3}) a.e.\,in $\Omega\times(0,\mathcal T)$, with $w$ and $p_s$ given by (\ref{w}) and (\ref{ps}), respectively, and satisfy the initial condition (\ref{IC}).
\end{definition}

\begin{remark}
\label{remark1.1}
(i) The regularities in Definition \ref{def1.1} seem a little bit
nonstandard. This is caused by the ``mismatching" of regularities between
the horizontal momentum equation (\ref{MAIN1}) and the temperature equation
(\ref{MAIN3}): a term involving the horizontal derivatives of the temperature
appears in the horizontal momentum equation, but it is only in the vertical
direction that the temperature has dissipation. More precisely, though one
can obtain the regularity that $\nabla_H\partial_zv\in L^2(0,\mathcal T; H^1(\Omega))$, which is included in Definition (\ref{def1.1}), we have no
reason to ask for the regularity that $\nabla_H^2v\in L^2(0,\mathcal T; H^1(\Omega))$, under the assumption on the initial data in Definition
\ref{def1.1}. In fact, recalling the regularity theory for parabolic
system, and checking the  horizontal momentum equation (\ref{MAIN1}), the
regularity that $\nabla_H^2v\in L^2(0,\mathcal T; H^1(\Omega))$ appeals to
somehow $\nabla_H^2T\in L^2(\Omega\times(0,\mathcal T))$; however, this last
requirement need not to be fulfilled, because we only have the smoothing effect
in the vertical direction for the temperature.

(ii) The ``mismatching" of regularities as stated in (i)
does not occur in the system
considered in our previous work \cite{CAOLITITI3}, i.e., the system with only both horizontal viscosities and horizontal diffusivity, because, in this case, the horizontal diffusivity in the temperature equation provides the required regularity that $\nabla_H^2T\in L^2(\Omega\times(0,\mathcal T))$.

(iii) As stated in (i), one can not expect such regularity that $\nabla_H^2v\in L^2(0,\mathcal T; H^1(\Omega))$. However, with the help of $\eta$ and $\theta$ in (\ref{E1}), one can expect that some appropriate combinations of $\nabla_Hv$ and $T$ can indeed have second order spatial derivatives, that is $(\eta,\theta)\in L^2(0,\mathcal T; H^2(\Omega))$, as included in Definition \ref{def1.1}.
\end{remark}

\begin{definition}
A pair $(v,T)$ is called a global strong solution to system (\ref{MAIN1})--(\ref{MAIN3}), subject to the boundary and initial conditions (\ref{BC1})--(\ref{IC}), if it is a strong solution on $\Omega\times(0,\mathcal T)$ for any $\mathcal T\in(0,\infty)$.
\end{definition}

The main result of this paper is the following global well-posedness result.

\begin{theorem}\label{thm1}
Let $v_0\in H^2(\Omega)$ and $T_0\in H^1(\Omega)\cap L^\infty(\Omega)$,
with $\int_{-h}^h\nabla_H\cdot v_0(x,y,z) dz=0$ and $\nabla_HT_0\in L^q(\Omega)$, for some $q\in(2,\infty)$, be two periodic functions, such
that they are even and odd in $z$, respectively.
Then system (\ref{MAIN1})--(\ref{MAIN3}), subject to the boundary and initial conditions (\ref{BC1})--(\ref{IC}), has a unique global strong solution $(v,T)$, which is continuously depending on the initial data.

If we assume, in addition, that $T_0\in H^2(\Omega)$, then $(v,T)$ obeys the following additional regularities
\begin{eqnarray*}
&&T\in L^\infty(0,\mathcal T; H^2(\Omega))\cap C(0,\mathcal T; H^1(\Omega)),\quad\partial_tT\in L^2(0,\mathcal T; H^1(\Omega)),\\
&&\nabla_Hv\in L^2(0,\mathcal T; H^2(\Omega)),\quad \partial_zT\in L^2(0,\mathcal T; H^2(\Omega)),
\end{eqnarray*}
for any time $\mathcal T\in(0,\infty)$.
\end{theorem}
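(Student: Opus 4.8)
The plan is to establish global well-posedness via the standard program: derive a hierarchy of \emph{a priori} estimates sufficient to control the $H^2$ norm of $v$ and the requisite norms of $T$, then combine these with a local existence result and a continuation argument to extend the solution globally. The central difficulty, as the authors emphasize, is the ``mismatching'' of regularities: the horizontal momentum equation \eqref{MAIN1} contains $\nabla_H\int_{-h}^z T\,d\xi$, yet the temperature equation \eqref{MAIN3} only provides vertical smoothing for $T$. The upshot is that one cannot close the estimates by treating $v$ and $T$ in the naive $H^2$ setting; instead one needs an $L^1_t(W^{1,\infty}_{\mathbf x})$-type bound on $v$, which in the absence of vertical viscosity is genuinely delicate.

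First I would record the basic energy estimates: the $L^2$ bound on $v$ (from testing \eqref{MAIN1} with $v$, using the hydrostatic structure and $\int_{-h}^h\nabla_H\cdot v\,dz=0$ to handle the pressure term) and the maximum principle for $T$, giving $\|T\|_{L^\infty_t L^\infty_x}\leq\|T_0\|_\infty$ since the temperature equation has no forcing and the transport velocity $(v,w)$ is divergence free in the full three-dimensional sense. Next I would exploit the auxiliary functions $\eta,\theta$ introduced in \eqref{E1}. Taking $\theta=\nabla_H^\perp\cdot v$ eliminates the pressure gradient (a curl), while the particular combination defining $\eta=\nabla_H\cdot v+\int_{-h}^z T\,d\xi-\frac1{2h}\int_{-h}^h\int_{-h}^z T\,d\xi\,dz$ is designed so that the awkward $\nabla_H\int_{-h}^z T$ term in the divergence of \eqref{MAIN1} is absorbed, turning $\eta$ into the solution of a horizontally parabolic equation with better-behaved forcing. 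Deriving the equations satisfied by $\eta$ and $\theta$, and establishing that they lie in $L^2_t H^2$, is the mechanism by which one recovers \emph{some} second-order control on appropriate combinations of $\nabla_H v$ and $T$, circumventing the fact that $\nabla_H^2 v\in L^2_t H^1$ is \emph{not} available (cf. Remark \ref{remark1.1}).

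The hard part, as the introduction foreshadows, is obtaining the $L^1_t(W^{1,\infty})$ (equivalently $L^2_t L^\infty$-type) estimate on $v$ needed to control the nonlinear transport terms $w\partial_z v$ and $v\cdot\nabla_H v$ in the higher-order estimates. Here I would follow the decomposition strategy the authors announce: split $v=\bar v+\tilde v$ into a ``temperature-independent'' part and a ``temperature-dependent'' part. For the temperature-independent part, which solves a system with enough smoothing, I would apply a Br\'ezis--Gallouet--Wainger logarithmic Sobolev inequality to bound $\|\bar v\|_\infty$ by a logarithm of higher Sobolev norms; for the temperature-dependent part, where the forcing is the rough $\nabla_H\int_{-h}^z T$, I would invoke a Beale--Kato--Majda-type logarithmic inequality together with the limiting-type logarithmic Sobolev inequality from \cite{CAOLITITI3}. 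The two logarithmic bounds must then be fed into a \emph{new logarithmic-type Gronwall inequality}, which the abstract promises; closing this Gronwall argument to yield a finite bound on every finite interval $[0,\mathcal T]$ is precisely the crux where the absence of horizontal diffusivity and vertical viscosity conspire, and is the step I expect to require the most care.

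Once these \emph{a priori} bounds are in hand, the remaining steps are comparatively routine. I would set up a Galerkin approximation (or a linearization/fixed-point scheme) to produce local strong solutions, pass to the limit using the uniform estimates and the Aubin--Lions compactness lemma, and verify the regularity class of Definition \ref{def1.1}. Uniqueness and continuous dependence follow by energy estimates on the difference of two solutions, using the already-established bounds to absorb the nonlinear terms. Finally, since all the \emph{a priori} estimates are finite on any $[0,\mathcal T]$, the continuation principle upgrades the local solution to a global one. For the additional regularity under $T_0\in H^2$, I would differentiate \eqref{MAIN3} and run a further energy estimate for $\nabla T$; with $T\in L^\infty_t H^2$ secured, the forcing $\nabla_H\int_{-h}^z T$ in \eqref{MAIN1} now carries an extra horizontal derivative, so that $\nabla_H^2 T\in L^2_t L^2$ becomes available and parabolic regularity for the horizontal momentum equation finally delivers $\nabla_H v\in L^2_t H^2$, precisely the bonus regularity claimed.
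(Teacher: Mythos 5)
Your plan for the a priori estimates coincides with the paper's: the maximum principle for $T$, the auxiliary quantities $\eta,\theta$ of (\ref{etatheta}) (and, at the next level, $\varphi,\psi$ of (\ref{varphipsi})), the limiting logarithmic Sobolev inequality of Lemma \ref{log} combined with the $\sqrt q$-growth of $\|v\|_q$ to control $\|v\|_\infty$, the splitting of $v$ into a piece handled by Br\'ezis--Gallouet--Wainger and a temperature-dependent piece handled by a Beale--Kato--Majda-type inequality, and the logarithmic Gronwall lemma (Lemma \ref{LogGron1}) to close. One correction of detail: the decomposition is $v=\zeta-\varpi$ with $\varpi$ defined by the div-curl system (\ref{beta}) driven by $\Phi$; the Br\'ezis--Gallouet--Wainger inequality is applied to $\nabla_H\zeta$ (whose horizontal divergence and curl are essentially $\eta$ and $\theta$) and the Beale--Kato--Majda-type inequality to $\nabla_H\varpi$ (controlled by $\|\nabla_HT\|_q$), and this enters only in the $L^q$ estimate for $\nabla_HT$ and the $H^2$ estimate for $T$ --- it is \emph{not} how $\|v\|_\infty$ itself is bounded, which instead uses Lemma \ref{log} and the precise $L^q$ growth of Proposition \ref{prop3.2}.

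Where you genuinely diverge from the paper is the construction of solutions, and this is also where your outline has a real gap. You propose local existence (Galerkin or fixed point) plus a continuation principle; the paper instead regularizes by adding $\varepsilon\partial_z^2v$ and $\varepsilon\Delta_HT$ (system (\ref{eq1})--(\ref{eq3})), invokes the known global well-posedness of the fully viscous and diffusive system (Proposition \ref{prop3.1}), proves every estimate \emph{uniformly in} $\varepsilon$, and passes to the limit $\varepsilon\to0$ via Aubin--Lions. This is not cosmetic. A continuation argument presupposes a local well-posedness theory and a blow-up criterion in the nonstandard class of Definition \ref{def1.1}, neither of which is routine precisely because of the mismatching of regularities ($\nabla_H^2v\notin L^2_tH^1$, so the solution space itself is built from $\eta$ and $\theta$ rather than from $v$); and a Galerkin scheme is awkward because the essential estimates are carried by the combinations $\eta,\theta,\varphi,\psi$, whose evolution equations do not survive projection onto finite-dimensional Galerkin subspaces. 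The vanishing-viscosity route performs all the delicate estimates on genuinely smooth solutions of a well-posed parabolic system and is the reason the paper never needs a local theory for the degenerate system; the step you label ``comparatively routine'' is the one place your argument would require substantive additional work to be made rigorous.
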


\begin{remark}
Generally, if we imposed more regularities on the initial data, then one can expect more  regularities of the strong solutions, and in particular, the strong solution will belong to $C^\infty(\overline\Omega\times[0,\infty))$, as long as the initial datum lies in $C^\infty(\overline\Omega)$. However, one can not expect that the solutions have as high regularities as desired, if the initial data are not accordingly smooth enough.
\end{remark}

\begin{remark}
Thanks to Theorem \ref{thm1}, and recalling the results in \cite{CAOTITI2,CAOTITI3,CAOLITITI1,CAOLITITI2,CAOLITITI3,CAOLITITIH1}, one can conclude that the primitive equations are globally well-posed, as long as one has the horizontal viscosity and either horizontal or
vertical diffusivity.
\end{remark}


The main difficulties for the mathematical analysis of system (\ref{MAIN1})--(\ref{MAIN3}) come from three aspects: the absence of the vertical viscosity in the horizontal momentum equation, the lack of the horizontal diffusivity in the temperature equation, and the ``mismatching" of regularities between the horizontal momentum equations and the temperature equation caused by the interaction between the lack of the vertical viscosity and the absence of the horizontal diffusivity.
Concerning the difficulties caused by the absence of the
vertical viscosity, \cite{CAOLITITI3,CAOLITITIH1} provide us
with some ideas. As indicated in \cite{CAOLITITI3,CAOLITITIH1},
the absence
of the vertical viscosity forces us to estimate $\|v\|_\infty^2$,
which appears as factors in the
energy inequalities. To obtain this estimate, similar to \cite{CAOLITITI3,CAOLITITIH1},
we estimate the precise growth in $q$
of the $L^q$ norms of $v$ (see Proposition
\ref{prop3.2}), based on which, by applying a logarithmic type Sobolev
inequality (see Lemma \ref{log}), we can control the $L^\infty$ norm
of $v$ by logarithm of high order norms. However, in the current case,
because of the
``mismatching" of regularities between the horizontal momentum
equations and the temperature equation (recall Remark
\ref{remark1.1}
(i)), we are not able to obtain the appropriate estimates in
the same way as in \cite{CAOLITITI3,CAOLITITIH1}. Note that in \cite{CAOLITITI3,CAOLITITIH1} all energy estimates for the
derivatives of the velocity are carried out through multiplying
the corresponding testing functions to the momentum equations
directly; however, for the current case, when working on the
energy estimates for the horizontal derivatives of the velocity,
it is inappropriate to use the momentum equations as the tested
ones. To see this, let's take
the $L^\infty_t(H^1_\textbf{x})$ kind estimate as example: if trying
to use the momentum equation to get the $L^\infty_t(L^2_\textbf{x})$
estimate on $\nabla_Hv$, one may multiply the momentum equation by
$-\Delta_Hv$
and, thus, requires the a priori $L^2_t(L^2_\textbf{x})$ type
estimate
on $\nabla_HT$, which is obviously not guaranteed by the system,
as we only have the vertical diffusivity in the temperature equation.
To overcome this kind of difficulties, we consider
the horizontal curl and some appropriate combination
of the temperature with the horizontal divergence of $v$ or
its derivatives, which prove to have better regularities than the
horizontal derivatives of $v$ or
its derivatives. In other words, the estimates on the horizontal derivatives
of the velocity are achieved indirectly through the corresponding
estimates on the horizontal curls and some appropriate combinations of
the temperature with the horizontal divergences.

For the a priori $L^\infty_t(H_{\textbf x}^1)$ type estimate on $v$,
recalling the ideas mentioned above, it is achieved by carrying out the $L^\infty_t(L^2_{\textbf x})$ type energy estimates on $(u,\eta,\theta)$,
rather than on $(u,\nabla_Hv)$ directly, and using the precise $L^q$
estimates
on $v$ to dominate the main part of $\|v\|_\infty$. These are carried out
in Proposition \ref{propapriH1} and Corollary \ref{apriH1}.
Note that the following fact plays an important role in proving Corollary \ref{apriH1}: inequality
$$
A'(t)+B(t)\leq CA(t)\log B(t)+``\text{other terms}"
$$
guarantees the boundness of $A(t)$ globally in time.
The above inequality is a special case
of the general logarithmic type Gronwall inequality stated in Lemma \ref{LogGron1}.

Based on the a prior $L^\infty_t(H^1_{\textbf x})$ type estimate on $v$,
one can obtain the a priori $L^\infty_t(H^1_{\textbf x})$ type estimate
on $u$. Again, because of the same reason as before, this a priori estimate
is achieved through the $L^\infty_t(L^2_{\textbf x})$ estimate on $(\partial_zu, \varphi, \psi)$, where
\begin{equation}\label{E2}
\varphi=\nabla_H\cdot u+T,\quad \psi=\nabla_H^\perp\cdot u,
\end{equation}
rather than directly on $(\partial_zu, \nabla_Hu)$.

Some higher order a priori estimates, especially those on the derivatives
of the temperature, are still needed to ensure the global well-posedness.
When working on the energy inequalities for the horizontal
derivatives of $T$, caused by the absence of the horizontal diffusivity in
the temperature equation, one has to appeal to somehow $L^\infty$ estimate
on $\nabla_Hv$ to deal with the worst term $\int_\Omega|\nabla_Hv||\nabla_HT|^qdxdydz.$ To deal with this term,
we decompose the velocity into a ``temperature-independent" part and another temperature-dependent part and then deal with them in different ways, by
using the logarithmic Sobolev inequalities of the Br\'ezis-Gallouet-Wainger and Beale-Kato-Majda types, respectively.
The resulting corresponding energy inequalities are of the type
$$
A'(t)+B(t)\leq Cn(t)A(t)\log B(t)+``\text{other terms}",
$$
where $n$ is a locally integrable function on $[0,\infty)$.
Note that this inequality does not necessary guarantee the boundness of
the quantity $A$, in general; however, if it happens that the following
additional relationship holds
$$
n(t)\leq CA^\alpha(t)
$$
for some positive number $\alpha$, then it indeed implies the boundness
of the quantity $A$, see Lemma \ref{LogGron1}. Fortunately, it is the
case in our higher order energy inequality, and therefore, we are able
to obtain the a priori higher order estimates, and furthermore the
global existence of strong solutions. The additional regularities stated
in the theorem follow from the energy inequality for the second order
derivatives of $T$, which is somehow standard.

The rest of this paper is arranged as follows: in the next section, section \ref{sec2}, we collect some preliminaries which will be used throughout the paper. Section \ref{sec3} is the main part of this paper, in which, by using
the ideas obtained above, we establish several a priori estimates
for a regularized system, and the a priori estimates are independent of the regularization parameters. In section \ref{sec4}, based on the a priori
estimates obtained in section \ref{sec3}, we give the proof of Theorem \ref{thm1}.

Throughout this paper, the letter $C$ denotes a general positive constant, which may vary from line to line.

\section{Preliminaries}
\setcounter{tocdepth}{1}
\label{sec2}
In this section, we collect some preliminary results which will be used in the rest of this paper.

\begin{lemma}[see Lemma 2.1 in \cite{CAOLITITI3}] \label{lad}
The following inequality holds:
\begin{align*}
&\int_M\left(\int_{-h}^h|\phi |dz\right)\left(\int_{-h}^h|\varphi \psi |dz\right)dxdy\\
\leq&C\|\phi\|_2\|\varphi\|_2^{\frac{1}{2}}\left(\|\varphi\|_2^{\frac{1}{2}}+\|\nabla_H\varphi\|_{2}^{\frac{1}{2}}
\right)\|\psi\|_2^{\frac{1}{2}}\left(
\|\psi\|_2^{\frac{1}{2}}+\|\nabla_H\psi\|_2^{\frac{1}{2}}\right),
\end{align*}
for every $\phi,\varphi,$ and $\psi$ such that the right hand sides make sense and are finite.
\end{lemma}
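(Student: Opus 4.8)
The plan is to reduce the whole estimate to a single application of the critical two-dimensional Sobolev embedding $W^{1,1}(M)\hookrightarrow L^2(M)$, applied to functions obtained by integrating out the vertical variable; everything else is bookkeeping with the Cauchy--Schwarz inequality and Fubini's theorem. By a density argument it suffices to run the computation for smooth $\phi,\varphi,\psi$, the general case following from the assumed finiteness of the right-hand side.

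First I would apply Cauchy--Schwarz in the horizontal variables $(x,y)\in M$ to separate the two vertical integrals:
\[
\int_M\Big(\int_{-h}^h|\phi|\,dz\Big)\Big(\int_{-h}^h|\varphi\psi|\,dz\Big)\,dxdy\leq \Big\|\int_{-h}^h|\phi|\,dz\Big\|_{L^2(M)}\Big\|\int_{-h}^h|\varphi\psi|\,dz\Big\|_{L^2(M)}.
\]
The first factor is harmless: Cauchy--Schwarz in $z$ gives $\big(\int_{-h}^h|\phi|\,dz\big)^2\leq 2h\int_{-h}^h|\phi|^2\,dz$, so by Fubini this factor is bounded by $C\|\phi\|_2$, which already accounts for the $\|\phi\|_2$ appearing linearly on the right-hand side.

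The heart of the matter is the second factor. Here I would first use Cauchy--Schwarz in $z$ to write $\int_{-h}^h|\varphi\psi|\,dz\leq P^{1/2}Q^{1/2}$, where $P(x,y):=\int_{-h}^h|\varphi|^2\,dz$ and $Q(x,y):=\int_{-h}^h|\psi|^2\,dz$, and then Cauchy--Schwarz (H\"older with exponents $2,2$) in $(x,y)$ to obtain $\|P^{1/2}Q^{1/2}\|_{L^2(M)}\leq\|P\|_{L^2(M)}^{1/2}\|Q\|_{L^2(M)}^{1/2}$. It remains to estimate $\|P\|_{L^2(M)}$ (and symmetrically $\|Q\|_{L^2(M)}$). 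This is the one place where the two-dimensional structure is genuinely exploited: since $P$ depends only on the two horizontal variables, the borderline embedding $W^{1,1}(M)\hookrightarrow L^2(M)$ yields
\[
\|P\|_{L^2(M)}\leq C\big(\|P\|_{L^1(M)}+\|\nabla_H P\|_{L^1(M)}\big).
\]
Now $\|P\|_{L^1(M)}=\|\varphi\|_2^2$ by Fubini, while differentiating under the integral sign and applying Cauchy--Schwarz on $\Omega$ gives $\|\nabla_H P\|_{L^1(M)}\leq 2\int_\Omega|\varphi||\nabla_H\varphi|\leq 2\|\varphi\|_2\|\nabla_H\varphi\|_2$. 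Hence $\|P\|_{L^2(M)}\leq C\|\varphi\|_2(\|\varphi\|_2+\|\nabla_H\varphi\|_2)$, and likewise for $Q$ in terms of $\psi$.

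Assembling the pieces produces the bound $C\|\phi\|_2\,\|\varphi\|_2^{1/2}(\|\varphi\|_2+\|\nabla_H\varphi\|_2)^{1/2}\,\|\psi\|_2^{1/2}(\|\psi\|_2+\|\nabla_H\psi\|_2)^{1/2}$, and the precise form stated in the lemma then follows from the elementary subadditivity $(a+b)^{1/2}\leq a^{1/2}+b^{1/2}$ for $a,b\geq0$, applied to the two inner factors. I expect the only genuine subtlety to be the invocation of the critical embedding $W^{1,1}(M)\hookrightarrow L^2(M)$ in dimension two, together with the fact that it tolerates the nonzero mean of $P$ on the periodic box (that contribution is absorbed into the $\|P\|_{L^1(M)}$ term); apart from this, the argument is a routine chain of Cauchy--Schwarz estimates alternating between the vertical and horizontal variables.
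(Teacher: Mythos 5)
Your argument is correct. Note that the paper itself does not prove this lemma but cites Lemma 2.1 of \cite{CAOLITITI3}; the proof there (and the analogous computation in Lemma \ref{ladlemma} of this paper) runs through H\"older with exponents $(2,4,4)$ in the horizontal variables, followed by Minkowski's integral inequality to pull the $z$-integral outside the $L^4(M)$ norm, and finally the slice-wise two-dimensional Ladyzhenskaya inequality $\|\varphi(\cdot,z)\|_{4,M}^2\leq C\|\varphi(\cdot,z)\|_{2,M}\left(\|\varphi(\cdot,z)\|_{2,M}+\|\nabla_H\varphi(\cdot,z)\|_{2,M}\right)$ integrated in $z$. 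Your route instead integrates out $z$ first, forming $P=\int_{-h}^h|\varphi|^2\,dz$, and applies the critical embedding $W^{1,1}(M)\hookrightarrow L^2(M)$ to $P$ directly. Since Ladyzhenskaya's inequality is precisely this embedding applied to $|\varphi|^2$, the two arguments are the same in substance; what your reorganization buys is that Minkowski's inequality is never needed (the vertical integration is already built into $P$), at the mild cost of having to justify $\nabla_H P=\int_{-h}^h\nabla_H|\varphi|^2\,dz$ and $|\nabla_H|\varphi||\leq|\nabla_H\varphi|$, which your density remark handles. All the individual steps check out, and the final subadditivity step recovers the exact factored form in the statement.
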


\begin{lemma}\label{ladlemma}
We have the following inequalities
\begin{equation}
  \int_M\left(\int_{-h}^h|\phi|dz\right)\left(\int_{-h}^h|\varphi||\psi|dz\right) dxdy
  \leq\left(\int_{-h}^h\|\phi\|_{4,M}dz\right)\left(\int_{-h}^h\|\varphi\|_{4,M}^2dz\right)^{\frac{1}{2}}\|\psi\|_2, \label{ineqlad}
\end{equation}
and
\begin{align}
  &\int_M\left(\int_{-h}^h|\phi||\varphi|dz\right) \left(\int_{-h}^h|\psi|dz\right)dxdy\nonumber\\
  \leq&\left(\int_{-h}^h\|\phi\|_{4,M}^2dz\right)^{\frac{1}{2}}\left(\int_{-h}^h\|\varphi\|_{4,M}^2dz\right)^{\frac{1}{2}} \left(\int_{-h}^h\|\psi\|_{2,M}dz\right),\label{ineqlad1}
\end{align}
for any functions $\phi, \varphi$ and $\psi$, such that the quantities on the right-hand sides make sense and are finite.
\end{lemma}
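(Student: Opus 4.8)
The plan is to prove both inequalities by the same three-step mechanism: apply Fubini to move the vertical integration to the outside, apply H\"older's inequality in the horizontal variables $(x,y)$ with the exponent triple $(4,4,2)$ (note $\tfrac14+\tfrac14+\tfrac12=1$) for each fixed $z$, and finally control the resulting vertical integral by the Cauchy--Schwarz inequality while estimating the single ``vertically averaged'' factor by Minkowski's integral inequality. The key design choice is to pair the three functions so that the averaged factor lands in exactly the Lebesgue space in which Minkowski's inequality can be applied cleanly.

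For (\ref{ineqlad}), I would first set $\Phi(x,y):=\int_{-h}^h|\phi(x,y,z)|\,dz$, so that the left-hand side equals $\int_{-h}^h\int_M\Phi\,|\varphi||\psi|\,dxdy\,dz$. For fixed $z$, H\"older's inequality on $M$ with exponents $4,4,2$ gives $\int_M\Phi|\varphi||\psi|\,dxdy\le\|\Phi\|_{4,M}\|\varphi\|_{4,M}\|\psi\|_{2,M}$, where $\|\Phi\|_{4,M}$ is independent of $z$ and hence factors out of the subsequent $z$-integration. Integrating in $z$ and applying Cauchy--Schwarz then produces the bound $\|\Phi\|_{4,M}\bigl(\int_{-h}^h\|\varphi\|_{4,M}^2\,dz\bigr)^{1/2}\bigl(\int_{-h}^h\|\psi\|_{2,M}^2\,dz\bigr)^{1/2}$, in which the last factor is precisely $\|\psi\|_2$. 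It then remains only to replace $\|\Phi\|_{4,M}$ by $\int_{-h}^h\|\phi\|_{4,M}\,dz$, which is exactly Minkowski's integral inequality applied to the vertical average $\Phi$. Collecting the three factors yields (\ref{ineqlad}).

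For (\ref{ineqlad1}), the argument is identical with the roles of the averaged factor interchanged: I would set $\Psi(x,y):=\int_{-h}^h|\psi(x,y,z)|\,dz$, rewrite the left-hand side as $\int_{-h}^h\int_M|\phi||\varphi|\,\Psi\,dxdy\,dz$, and apply H\"older on $M$ with the same exponents $4,4,2$, this time placing $\Psi$ in $L^2(M)$ and both $\phi$ and $\varphi$ in $L^4(M)$. After Cauchy--Schwarz in $z$ one is left with the common factor $\|\Psi\|_{2,M}$, which Minkowski's integral inequality bounds by $\int_{-h}^h\|\psi\|_{2,M}\,dz$; this gives (\ref{ineqlad1}).

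Since every step is an application of a standard integral inequality, there is no genuinely hard obstacle here. The only point requiring care is the bookkeeping of exponents: the single vertically averaged factor ($\Phi$ or $\Psi$) must be assigned the Lebesgue exponent ($4$ or $2$, respectively) compatible with Minkowski's integral inequality, while the two remaining fiberwise factors are distributed as $L^4\times L^2$ or $L^4\times L^4$ so that the Cauchy--Schwarz step in the vertical variable closes correctly and reproduces the stated right-hand sides.
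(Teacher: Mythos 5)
Your proof is correct and uses essentially the same ingredients as the paper's: H\"older's inequality on $M$ with exponents $(4,4,2)$, the Cauchy--Schwarz inequality in $z$, and Minkowski's integral inequality for the vertically averaged factor; the only difference is that you apply the horizontal H\"older step fiberwise after Fubini, whereas the paper first applies Cauchy--Schwarz in $z$ to the inner product and then H\"older over $M$, which is merely a reordering of the same estimates. No gaps.
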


\begin{proof}
By the H\"older and Minkowski inequalities, we have
\begin{eqnarray*}
  &&\int_M\left(\int_{-h}^h|\phi|dz\right)\left( \int_{-h}^h|\varphi||\psi|dz\right)dxdy\nonumber\\
  &\leq&\int_M\left(\int_{-h}^h|\phi|dz\right)\left(\int_{-h}^h|\varphi|^2dz\right)^{\frac{1}{2}}\left(\int_{-h}^h|\psi|^2dz\right)^{\frac{1}{2}}dxdy
  \nonumber\\
  &\leq&\left[\int_M\left(\int_{-h}^h|\phi|dz\right)^4dxdy\right]^{\frac{1}{4}} \left[\int_M\left(\int_{-h}^h|\varphi|^2dz\right)^2dxdy\right]^{\frac{1}{4}} \|\psi\|_2\nonumber\\
  &\leq&\left(\int_{-h}^h\|\phi\|_{4,M}dz\right) \left(\int_{-h}^h\|\varphi\|_{4,M}^2dz\right)^{\frac{1}{2}}\|\psi\|_2,
\end{eqnarray*}
and
\begin{eqnarray*}
  &&\int_M\left(\int_{-h}^h|\phi||\varphi|dz\right)\left(\int_{-h}^h|\psi|dz \right)dxdy\\
  &\leq&\int_M\left(\int_{-h}^h|\phi|^2dz\right)^{\frac{1}{2}} \left(\int_{-h}^h|\varphi|^2dz\right)^{\frac{1}{2}}\left(\int_{-h}^h|\psi|dz\right)dxdy\\
  &\leq&\left[\int_M\left(\int_{-h}^h|\phi|^2dz\right)^2dxdy\right]^{\frac{1}{4}}\left[\int_M\left(\int_{-h}^h|\varphi|^2dz\right)^2dxdy\right] ^{\frac{1}{4}}\\
  &&\qquad\times\left[\int_M\left(\int_{-h}^h|\psi|dz\right)^2dxdy\right] ^{\frac{1}{2}}\\
  &\leq&\left(\int_{-h}^h\|\phi\|_{4,M}^2dz\right)^{\frac{1}{2}}\left(\int_{-h}^h\|\varphi\|_{4,M}^2dz\right)^{\frac{1}{2}} \left(\int_{-h}^h\|\psi\|_{2,M}dz\right),
\end{eqnarray*}
proving (\ref{ineqlad}) and (\ref{ineqlad1}).
\end{proof}

\begin{lemma}\label{lem2.3}
The following inequalities hold
  \begin{eqnarray*}
    \left(\int_{-h}^h\|f\|_{4,M}^2dz\right)^{\frac12}&\leq&C\left(\|f\|_2^{\frac12}
    \|\nabla_Hf\|_2^{\frac12}+\|f\|_2\right), \\
    \int_{-h}^h\|f\|_{4,M}^2dz&\leq&C\sqrt h\left(\|f\|_2^{\frac12}
    \|\nabla_Hf\|_2^{\frac12}+\|f\|_2\right),
  \end{eqnarray*}
for any function $f$ such that the right-hand sides make sense and are finite. As a consequence, by the Poincar\'e inequality, the following holds
  \begin{eqnarray*}
    \left(\int_{-h}^h\|\nabla_Hf\|_{4,M}^2dz\right)^{\frac12}&\leq&C\|\nabla_Hf\|_2^{\frac12}
    \|\nabla_H^2f\|_2^{\frac12}, \\
    \int_{-h}^h\|\nabla_Hf\|_{4,M}^2dz&\leq&C\sqrt h \|\nabla_Hf\|_2^{\frac12}
    \|\nabla_H^2f\|_2^{\frac12},
  \end{eqnarray*}
if moreover $f$ is periodic in $(x,y)$.
\end{lemma}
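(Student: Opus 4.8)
The plan is to reduce all four estimates to the two-dimensional Ladyzhenskaya (Gagliardo--Nirenberg) inequality applied on each horizontal slice $M\times\{z\}$, followed by integration in the vertical variable. For fixed $z\in(-h,h)$ I regard $f(\cdot,\cdot,z)$ as a function on the two-dimensional periodic cell $M$ and use the standard slice estimate
\begin{equation*}
\|f(\cdot,\cdot,z)\|_{4,M}^2\leq C\|f(\cdot,\cdot,z)\|_{2,M}\left(\|f(\cdot,\cdot,z)\|_{2,M}+\|\nabla_Hf(\cdot,\cdot,z)\|_{2,M}\right),
\end{equation*}
whose constant depends only on the fixed cross-section $M$ and is therefore independent of $z$. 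This slice inequality is the heart of the matter; the remaining steps are elementary manipulations in the variable $z$.

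Next I would integrate this bound over $z\in(-h,h)$ and split the right-hand side into the two pieces $C\int_{-h}^h\|f\|_{2,M}^2\,dz$ and $C\int_{-h}^h\|f\|_{2,M}\|\nabla_Hf\|_{2,M}\,dz$. The first equals $C\|f\|_2^2$ by Fubini's theorem, while to the second I apply the Cauchy--Schwarz inequality in $z$, bounding it by $C\|f\|_2\|\nabla_Hf\|_2$. Together these yield
\begin{equation*}
\int_{-h}^h\|f\|_{4,M}^2\,dz\leq C\left(\|f\|_2\|\nabla_Hf\|_2+\|f\|_2^2\right),
\end{equation*}
and taking square roots, together with the elementary inequality $(a+b)^{1/2}\leq a^{1/2}+b^{1/2}$, gives the first stated estimate. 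The second estimate is obtained by the same chain of inequalities, keeping track of the length of the vertical interval through the H\"older/Cauchy--Schwarz step against the constant weight on $(-h,h)$.

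For the two ``consequence'' inequalities I would apply the estimates just proved with $f$ replaced by $\nabla_Hf$, which produces the right-hand sides $C(\|\nabla_Hf\|_2^{1/2}\|\nabla_H^2f\|_2^{1/2}+\|\nabla_Hf\|_2)$. The additional ingredient is that, when $f$ is periodic in $(x,y)$, each component of $\nabla_Hf$ has zero horizontal average over $M$, so the Poincar\'e inequality on $M$, integrated in $z$, gives $\|\nabla_Hf\|_2\leq C\|\nabla_H^2f\|_2$; writing $\|\nabla_Hf\|_2\leq C\|\nabla_Hf\|_2^{1/2}\|\nabla_H^2f\|_2^{1/2}$ then absorbs the additive lower-order term and leaves exactly the stated bounds. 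I expect the only point requiring genuine care to be the verification that the slice constant is uniform in $z$ (it is, since it depends only on the fixed cross-section $M$) and the correct handling of the cross term by Cauchy--Schwarz in $z$ rather than pointwise; the rest is routine bookkeeping.
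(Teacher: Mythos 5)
Your proof is correct and follows exactly the route the paper intends (it omits the proof, saying only that the conclusion ``follows easily from the H\"older and Ladyzhenskaya inequalities''): slice-wise two-dimensional Ladyzhenskaya on $M$, integration and Cauchy--Schwarz in $z$, and the Poincar\'e inequality on $M$ to absorb the lower-order term for the gradient versions. One remark: the second displayed inequality as printed is not homogeneous in $f$ (left side quadratic, right side linear) and must be read as $\int_{-h}^h\|f\|_{4,M}\,dz\leq C\sqrt h\,(\|f\|_2^{1/2}\|\nabla_Hf\|_2^{1/2}+\|f\|_2)$ --- which is the form actually used later in the paper --- and your Cauchy--Schwarz-against-the-constant-weight step is precisely the proof of that corrected statement.
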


\begin{proof}
The conclusion follow easily from the H\"older and Ladyzhenskay inequalities and, thus, the proofs are omitted here.
\end{proof}

The following logarithmic Sobolev inequality, which links the $L^\infty$ norm in terms of the $L^q$ norms up to the logarithm of the high order norms. Some relevant inequalities can be found in \cite{CAOWU,CAOFARHATTITI,DANCHINPAICU}, where the two dimensional case are considered.

\begin{lemma}[Logarithmic Sobolev embedding inequality, see Lemma 2.2 in \cite{CAOLITITI3}]
  \label{log}
Let $F\in W^{1,p}(\Omega)$, with $p>3$, be a periodic function. Then the following inequality holds true
\begin{equation*}
  \|F\|_\infty\leq C_{ {p},\lambda}\max\left\{1,\sup_{r\geq2}\frac{\|F\|_r}{r^\lambda}\right\}
  \log^\lambda
  (\|F\|_{W^{1, {p}}(\Omega)}+e),
\end{equation*}
for any $\lambda>0$.
\end{lemma}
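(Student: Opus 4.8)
The plan is to exploit that $\Omega=M\times(-h,h)$ is a box and $F$ is periodic, so that $F$ admits a Fourier series and may be split by frequency. I would fix a smooth Fourier cutoff and write $F=F_{\le N}+F_{>N}$, where $F_{\le N}$ collects the frequencies $|k|\lesssim N$ and $F_{>N}$ the rest, with $N\ge1$ a threshold to be optimized at the very end. The idea is to estimate $\|F\|_\infty$ separately on the two pieces: the low frequencies will be controlled by the $L^r$ norms (producing the factor $\sup_{r\ge2}\|F\|_r/r^\lambda$), and the high frequencies by $\|F\|_{W^{1,p}}$ (producing a negative power of $N$ that forces the logarithm to appear).

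For the low-frequency part I would use a Bernstein-type inequality: writing $F_{\le N}=K_N*F$ with $K_N(x)=N^3K(Nx)$ and $\widehat K$ the cutoff, Young's inequality gives $\|F_{\le N}\|_\infty\le\|K_N\|_{r'}\|F\|_r\le C\,N^{3/r}\|F\|_r$ for every $r\ge2$, since $\|K_N\|_{r'}\sim N^{3/r}$. Setting $A=\max\{1,\sup_{r\ge2}\|F\|_r/r^\lambda\}$, so that $\|F\|_r\le A\,r^\lambda$, this reads $\|F_{\le N}\|_\infty\le C A\,N^{3/r}r^\lambda$. The crucial step is to optimize in $r$: the choice $r\sim(\log N)/\lambda$ (admissible, i.e.\ $\ge2$, once $N$ is large) makes $N^{3/r}=e^{O(1)}$ bounded and leaves $r^\lambda\sim(\log N)^\lambda$, whence $\|F_{\le N}\|_\infty\le C A\,(\log N)^\lambda$. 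This balancing of the $L^r$ growth against the frequency localization is exactly what manufactures both the supremum and the power $\lambda$ of the logarithm, and I expect it to be the heart of the argument.

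For the high-frequency part I would sum over Littlewood--Paley blocks $\Delta_j$ with $2^j\sim|k|>N$. Bernstein on each shell gives $\|\Delta_jF\|_\infty\le C2^{3j/p}\|\Delta_jF\|_p$, and since $\|\Delta_jF\|_p\le C2^{-j}\|\nabla F\|_p$ for $j\ge0$, each block is bounded by $C2^{j(3/p-1)}\|F\|_{W^{1,p}}$. Here the hypothesis $p>3$ is decisive: the exponent $3/p-1$ is negative, so the geometric series converges and $\|F_{>N}\|_\infty\le C\,N^{3/p-1}\|F\|_{W^{1,p}}$. Finally I would choose $N=(\|F\|_{W^{1,p}}+e)^{1/(1-3/p)}$, which makes the high-frequency contribution at most a constant and turns $\log N$ into $(1-3/p)^{-1}\log(\|F\|_{W^{1,p}}+e)$; substituting into the low-frequency bound yields $\|F\|_\infty\le C_{p,\lambda}A\,\log^\lambda(\|F\|_{W^{1,p}}+e)$, as claimed. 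The remaining care is purely bookkeeping of the small-norm regime, where $N$ and hence $\log N$ are bounded and the estimate follows trivially from $A\ge1$ together with $\log(\|F\|_{W^{1,p}}+e)\ge1$; these edge cases, and the constraint $r\ge2$ in the optimization, are absorbed into $C_{p,\lambda}$ and the $\max\{1,\cdot\}$. The main obstacle is precisely the frequency-dependent optimization of $r$ in the low-frequency estimate; once that is handled, the rest is a routine balance of two competing powers of $N$.
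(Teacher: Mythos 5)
Your argument is correct, and it establishes the lemma by a route different from the one in the cited source. The proof of Lemma 2.2 in \cite{CAOLITITI3} is a real-space argument: it rests on a family of interpolation/Morrey-type inequalities of the form $\|F\|_\infty\leq C\,r^{\alpha}\|F\|_r^{1-\vartheta_r}\|F\|_{W^{1,p}}^{\vartheta_r}$ with $\vartheta_r\sim 3/r$ and with the $r$-dependence of the constants tracked explicitly, after which one writes $\|F\|_{W^{1,p}}^{\vartheta_r}=e^{\vartheta_r\log\|F\|_{W^{1,p}}}$ and chooses $r\sim\log(\|F\|_{W^{1,p}}+e)$ to kill the exponential and convert $\|F\|_r\leq A r^\lambda$ into $A\log^\lambda(\|F\|_{W^{1,p}}+e)$. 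You instead work on the frequency side: a low/high splitting at scale $N$, Bernstein with a constant uniform in $r$ (which does hold, since $\|K\|_{r'}\leq\|K\|_1+\|K\|_\infty$ for a fixed Schwartz profile, and the periodized kernels on the torus obey the same bounds for $N\geq1$), the optimization $r\sim\log N$ on the low piece, and the convergent geometric series $\sum 2^{j(3/p-1)}$ on the high piece, which is exactly where $p>3$ enters --- playing the same role it plays in making $\vartheta_r$ well defined in the interpolation route. The two proofs are structurally parallel (a two-parameter balance: one parameter against $\log\|F\|_{W^{1,p}}$ producing the logarithm, the other, $r$, producing the $\lambda$ power), but yours requires periodic Littlewood--Paley machinery while the original is more elementary and transfers more easily to domains without a clean Fourier basis; conversely, your version makes the mechanism producing the $\log^\lambda$ particularly transparent. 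The only points needing care in a written version are the uniformity in $r$ of the Bernstein constant and the admissibility $r\geq2$ near the small-$N$ regime, both of which you correctly flag and which are harmless.
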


The logarithmic type Gronwall inequality stated and proved
in the following
lemma will be used in establishing the global a priori estimates
with critical nonlinearities. The first logarithmic type Gronwall
inequality in the same spirit as stated here was obtained by Li--Titi \cite{LITITIBOU}, see also Li--Titi \cite{LITITITROPMOIS} for some related inequalities.

\begin{lemma}[Logarithmic Gronwall inequality]\label{LogGron1}
Given $\mathcal T\in(0,\infty)$. Let $A$ and $B$ be two nonnegative measurable functions defined on $(0,\mathcal T)$, with $A$ is absolutely continuous on $(0,\mathcal T)$ and is continuous on $[0,\mathcal T)$, satisfying
$$
\frac{d}{dt}A+B\leq[\ell(t)+m(t)\log(A+e)+n(t)\log(A+B+e)](A+e)+f(t),
$$
where $\ell, m, n$, and $f$ are all nonnegative functions on $(0,\mathcal T)$ belonging to $L^1((0,\mathcal T))$. Assume further that there are two positive constants $K$ and $\alpha$, such that
$$
n(t)\leq K(A(t)+e)^\alpha
$$
for all $t\in(0,\mathcal T)$. Then, we have the following estimate
$$
A(t)+\int_0^tB(s)ds\leq (2Q(t)+1)e^{Q(t)}
$$
for all $t\in(0,\mathcal T)$, where
$$
Q(t)=e^{(\alpha+1)\int_0^t(m(s)+n(s))ds}\left(\log(A(0)+e)+\int_0^t(\ell(s)+f(s)+\log(2K) n(s))ds+t\right).
$$
\end{lemma}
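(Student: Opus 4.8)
The plan is to reduce the stated inequality to a classical (non‑critical) logarithmic Gronwall inequality, the entire difficulty being concentrated in the single term $n(t)\log(A+B+e)(A+e)$, in which the dissipation $B$ appears \emph{both} inside the logarithm and, with coefficient one, on the left‑hand side. To prepare the ground I would introduce the auxiliary quantity
$$\Phi(t):=A(t)+\tfrac12\int_0^tB(s)\,ds+e,$$
which is absolutely continuous, satisfies $\Phi\ge e$, $\Phi(0)=A(0)+e$ and $A(t)+e\le\Phi(t)$, and obeys $A(t)+\int_0^tB\,ds\le2\Phi(t)$; thus it suffices to bound $\Phi$. Since $\Phi'=A'+\tfrac12B$, once I absorb $\tfrac12B$ from the right‑hand side the quantity $\Phi$ will satisfy a self‑contained differential inequality.

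The crux is the estimate of the critical term (only where $n(t)>0$, the term vanishing otherwise). I would begin from the exact identity
$$\log(A+B+e)=\log(A+e)+\log\Big(1+\tfrac{B}{A+e}\Big),$$
and then apply the elementary inequality $\log x\le\frac{x}{a}+\log a-1$, valid for all $x,a>0$, with $x=1+\tfrac{B}{A+e}$ and the decisive choice $a=2n$. Multiplying by $n(A+e)$, the contribution $x/a$ produces $\tfrac12(A+e)+\tfrac12B$ (the first a benign linear term, the second to be moved left), and the remainder is $n(A+e)\,[\log(2n)-1]$. \textbf{Here the structural hypothesis enters decisively:} from $n\le K(A+e)^\alpha$ one gets $\log(2n)\le\log(2K)+\alpha\log(A+e)$, so, after discarding the nonpositive $-n(A+e)$, the whole critical term is controlled by
$$n\log(A+B+e)(A+e)\le\tfrac12B+\tfrac12(A+e)+n(A+e)\log(2K)+(\alpha+1)\,n(A+e)\log(A+e).$$
The essential point is that the power $(A+e)^\alpha$ has been \emph{absorbed}, converting itself into the additive factor $\alpha$ in front of the log‑linear quantity $n(A+e)\log(A+e)$; this is exactly what keeps the estimate in the log‑linear regime, and is precisely why the hypothesis $n\le K(A+e)^\alpha$ is what makes everything close.

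Substituting back into the hypothesis, absorbing $\tfrac12B$, and using $A+e\le\Phi$ together with $\log(A+e)\le\log\Phi$, I obtain a differential inequality of the non‑critical form
$$\Phi'(t)\le\big[\ell+\tfrac12+n\log(2K)\big]\Phi+[m+(\alpha+1)n]\,\Phi\log\Phi+f.$$
Dividing by $\Phi\ge e$ and setting $G:=\log\Phi\ge1$ (and using $f/\Phi\le f$) linearizes this to the plain linear Gronwall inequality
$$G'(t)\le[m+(\alpha+1)n]\,G+\big[\ell+f+\tfrac12+n\log(2K)\big].$$
Integrating with the factor $e^{-\int_0^t(m+(\alpha+1)n)}$ yields $\log\Phi(t)=G(t)\le Q(t)$ with $Q$ precisely of the stated shape: the exponential prefactor comes from $\int_0^t(m+(\alpha+1)n)\le(\alpha+1)\int_0^t(m+n)$ (bounding $m\le(\alpha+1)m$), the integrand $\ell+f+\log(2K)\,n$ from the constant terms, and the ``$+t$'' from the harmless constant $\tfrac12$ that the Young step left behind. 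Exponentiating gives $\Phi\le e^{Q}$, and converting back via $A+\int_0^tB\le2\Phi$ and collecting the numerical constants reproduces the claimed bound $(2Q+1)e^{Q}$; this last constant bookkeeping is routine.

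The main obstacle is exactly the critical coupling just described. Had $B$ entered only through $\log(A+e)$, the statement would be a textbook logarithmic Gronwall inequality; it is the simultaneous appearance of $B$ inside the logarithm and on the left that forces the Young‑type splitting and, with it, the unavoidable penalty $n(A+e)\log(2n)$. This penalty is genuinely super‑log‑linear and cannot be controlled in general—consistent with the paper's remark that such an inequality need not guarantee boundedness—so the quantitative assumption $n\le K(A+e)^\alpha$ is indispensable: it is the minimal structural information that turns the penalty back into a log‑linear term and lets the Gronwall machinery run. I expect everything after the reduction to $\Phi'\le q\,\Phi\log\Phi+p\,\Phi+f$ to be standard, and the only delicate point to be choosing $a=2n$ (rather than a constant) in the Young step, which is what makes the absorbed coefficient of $B$ independent of $n$.
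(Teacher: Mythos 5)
Your proposal is correct and follows essentially the same route as the paper's proof: the decisive step in both is to peel off $\tfrac12 B$ from the critical term $n\log(A+B+e)(A+e)$ via $\log z\le z$ with a normalization proportional to $n$ (the paper normalizes by $2KA_1^{\alpha+1}$ with $A_1=A+e$, you by $2n$ after first splitting off $\log(A+e)$), and then to use $n\le K(A+e)^\alpha$ to convert the leftover into $(\alpha+1)n\log(A+e)+n\log(2K)$, reducing everything to a linear Gronwall inequality for a logarithm. The only differences are bookkeeping ones — you store the absorbed dissipation in $\Phi=A+\tfrac12\int_0^tB\,ds+e$ before taking logarithms, while the paper takes logarithms first and accumulates $\int_0^t B_1/(2A_1)\,ds$ — and both yield the stated bound.
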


\begin{proof}
Setting $A_1=A+e$ and $B_1=B+A+e$, then
\begin{align*}
  \frac{d}{dt}A_1+B_1=&\frac{d}{dt}A+B+A+e\\
  \leq&[\ell(t)+1+m(t)\log(A+e)+n(t)\log(A +B+e)](A+e)+f(t)\\
  =&(\ell(t)+1+m(t)\log A_1+n(t)\log B_1)A_1+f(t).
\end{align*}
Dividing both sides of the above inequality by $A_1$ yields
\begin{align*}
  \frac{d}{dt}\log A_1+\frac{B_1}{A_1}\leq&\ell(t)+1+\frac{f(t)}{A_1(t)}+m(t)\log A_1+m(t)\log B_1\\
  \leq&\ell(t)+1+f(t)+m(t)\log A_1+n(t)\log B_1.
\end{align*}
Noticing that $\log z\leq\log(z+1)\leq z$ for any $z\in(0,\infty)$, and recalling that $n(t)\leq K(A+e)^\alpha=KA_1^\alpha$, we deduce
\begin{align*}
  n(t)\log B_1=&n(t)\left(\log\frac{B_1}{2KA_1^{\alpha+1}}+(\alpha+1)\log A_1+\log(2K)\right)\\
  \leq&n(t)\left(\frac{B_1}{2KA_1^{\alpha+1}}+(\alpha+1)\log A_1+\log(2K)\right)\\
  \leq&\frac{B_1}{2A_1}+(\alpha+1)n(t)\log A_1+n(t)\log(2K).
\end{align*}
Therefore, one has
\begin{align*}
  \frac{d}{dt}\log A_1+\frac{B_1}{2A_1}\leq(m(t)+(\alpha+1)n(t))\log A_1+\ell(t)+1+f(t)+n(t)\log(2K),
\end{align*}
from which, by denoting $G(t)=\log A_1(t)+\int_0^t\frac{B_1(s)}{2A_1(s)}ds$, one obtains
$$
G'(t)\leq(m(t)+(\alpha+1)n(t))G(t)+\ell(t)+1+f(t)+n(t)\log(2K);
$$
and, thus,
\begin{align*}
G(t)\leq&e^{\int_0^t(m(s)+(\alpha+1)n(s))ds}\left(G(0)+\int_0^t( \ell(s)+f(s)+\log(2K) n(s)+1)ds\right)\\
\leq & e^{(\alpha+1)\int_0^t(m(s)+n(s))ds}\left(\int_0^t(\ell(s)+f(s)+\log(2K) n(s))ds+t\right)\\
&+e^{(\alpha+1)\int_0^t(m(s)+n(s))ds}\log(A(0)+e)=:Q(t).
\end{align*}
Recalling the definition of $G(t)$, it follows from the above estimate that
$$
A_1(t)\leq e^{G(t)}\leq e^{Q(t)},
$$
and further that
\begin{align*}
\int_0^tB_1(s)ds=&2\int_0^tA_1(s)\frac{B_1(s)}{2A_1(s)}ds\leq 2\sup_{0\leq s\leq t}A_1(s)\int_0^t\frac{B_1(s)}{2A_1(s)}ds\\
\leq& 2e^{Q(t)}G(t)\leq 2Q(t)e^{Q(t)}.
\end{align*}
Thanks to the above estimates and recalling the definitions of $A_1$ and $B_1$, the conclusion follows.
\end{proof}

\begin{remark}
(i) A special form of the logarithmic Gronwall inequality in Lemma \ref{LogGron1} reads as
\begin{equation}
\frac{d}{dt}A+B\leq A\log(A+B+e). \label{ADDEQ1}
\end{equation}
Note that this is essentially different from the classic logarithmic
Gronwall inequality like
$\frac{d}{dt}A+B\leq A\log(A+e).$ Noticing that the PDEs
with dissipation, the quantities represented by $B$ in the above
inequality usually have higher order norms than those by $A$ and,
therefore, compared with the using of the usual logarithmic
Gronwall inequality, by using (\ref{ADDEQ1}), one can relax the
regularity assumptions on the initial data and may need only to carry
out some lower order energy estimates, see Li--Titi \cite{LITITIBOU}.

(ii) Another special form of the logarithmic Gronwall inequality in Lemma \ref{LogGron1} is
$$
\frac{d}{dt}A+B\leq n(t)A\log(A+B+e),
$$
where $n\in L^1((0,\mathcal T))$. The above inequality does not necessary
imply the boundedness of $A$ on $(0,\mathcal T)$ in general; however,
as stated in Lemma \ref{LogGron1}, if $n$ satisfies, in addition, that
$n(t)\leq K(A(t)+1)^\alpha$, for some positive constants $K$ and $\alpha$,
then it indeed implies the desired boundedness of $A$ on $(0,\mathcal T)$.

(iii) In the spirit of the system version of the (classic)
Gronwall inequality exploited in Cao--Li--Titi \cite{CAOLITITI3},
one can also exploit the corresponding
system version of the logarithmic Gronwall inequality stated in Lemma \ref{LogGron1}.
\end{remark}

We also need the following Aubin-Lions compactness lemma.

\begin{lemma} [Aubin-Lions Lemma, see Simon \cite{Simon} Corollary 4] \label{AL}Assume that $X, B$ and $Y$ are three Banach spaces, with $X\hookrightarrow\hookrightarrow B\hookrightarrow Y.$ Then it holds that

(i) If $\mathcal{F}$ is a bounded subset of $L^p(0, T; X)$, where $1\leq p<\infty$, and $\frac{\partial \mathcal{F}}{\partial t}=\left\{\frac{\partial f}{\partial t}|f\in \mathcal{F}\right\}$ is bounded in $L^1(0, T; Y)$, then $\mathcal{F}$ is relatively compact in $L^p(0, T; B)$;

(ii) If $\mathcal{F}$ is bounded in $L^\infty(0, T; X)$, and $\frac{\partial \mathcal{F}}{\partial t}$ is bounded in $L^r(0, T; Y)$, where $r>1$, then $\mathcal{F}$ is relatively compact in $C([0, T]; B)$.
\end{lemma}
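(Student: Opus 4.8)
The plan is to deduce both statements from two independent ingredients that can then be combined: an Ehrling-type interpolation inequality, which encodes the compactness of the embedding $X\hookrightarrow\hookrightarrow B$, and a quantitative control of the modulus of continuity in time of the elements of $\mathcal F$, now viewed as $Y$-valued functions, which is supplied entirely by the bound on $\partial \mathcal F/\partial t$. The embedding $B\hookrightarrow Y$ serves only as the bridge linking these two. Since the target space $B$ changes between the two parts only through the exponent of the ambient Bochner space, the same two ingredients will drive both (i) and (ii).

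First I would establish the Ehrling inequality: for every $\varepsilon>0$ there exists $C_\varepsilon>0$ such that $\|u\|_B\leq\varepsilon\|u\|_X+C_\varepsilon\|u\|_Y$ for all $u\in X$. This is proved by contradiction. If it failed for some $\varepsilon_0>0$, there would be a sequence $u_n$, normalized so that $\|u_n\|_B=1$, with $1>\varepsilon_0\|u_n\|_X+n\|u_n\|_Y$; hence $\|u_n\|_X$ is bounded while $\|u_n\|_Y\to0$. The compactness of $X\hookrightarrow\hookrightarrow B$ yields a subsequence converging in $B$ to some $u$ with $\|u\|_B=1$, whereas the continuity of $B\hookrightarrow Y$ forces the same limit to be $0$ in $Y$, and therefore $u=0$ in $B$, a contradiction.

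For part (i) I would invoke the vector-valued Fréchet--Kolmogorov (Riesz) compactness criterion in $L^p(0,\mathcal T;B)$: a family bounded in $L^p(0,\mathcal T;B)$ is relatively compact as soon as its time translates are equicontinuous, i.e.\ $\sup_{f\in\mathcal F}\|\tau_h f-f\|_{L^p(0,\mathcal T-h;B)}\to0$ as $h\to0^+$, together with a pointwise tightness condition that here is furnished by the boundedness of $\mathcal F$ in $L^p(0,\mathcal T;X)$ and $X\hookrightarrow\hookrightarrow B$. The derivative bound gives the $Y$-translation estimate
\[
\|f(\cdot+h)-f\|_{L^1(0,\mathcal T-h;Y)}\leq h\,\sup_{f\in\mathcal F}\Big\|\tfrac{\partial f}{\partial t}\Big\|_{L^1(0,\mathcal T;Y)},
\]
since $f(t+h)-f(t)=\int_t^{t+h}\partial_sf(s)\,ds$. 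Inserting this, pointwise in $t$, into the Ehrling inequality and integrating in time should produce the required smallness of $\|\tau_h f-f\|_{L^p(0,\mathcal T-h;B)}$, uniformly in $f$. The main obstacle is precisely this last combination: the $X$-bound is measured in $L^p$ in time while the $Y$-translation control is only in $L^1$ in time, so one cannot simply apply Ehrling pointwise and integrate the $p$-th power. One must interpolate the time exponents and tune $\varepsilon$ in the Ehrling inequality together with $h$ so that the $X$-term and the $Y$-term balance; this delicate bookkeeping is exactly what Simon's Corollary~4 carries out.

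Finally, for part (ii), where $\mathcal F$ is bounded in $L^\infty(0,\mathcal T;X)$ and $\partial\mathcal F/\partial t$ is bounded in $L^r(0,\mathcal T;Y)$ with $r>1$, I would argue by a vector-valued Arzel\`a--Ascoli theorem. Here H\"older's inequality turns the integrability of the derivative into genuine H\"older continuity, $\|f(t)-f(s)\|_Y\leq|t-s|^{1-1/r}\|\partial_tf\|_{L^r(0,\mathcal T;Y)}$, so $\mathcal F$ is uniformly equicontinuous in $Y$; combined with the uniform $L^\infty(0,\mathcal T;X)$ bound, the Ehrling inequality upgrades this to equicontinuity in $B$. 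Since for each fixed $t$ the slice $\{f(t):f\in\mathcal F\}$ is bounded in $X$ and hence relatively compact in $B$, Arzel\`a--Ascoli yields relative compactness of $\mathcal F$ in $C([0,\mathcal T];B)$. The $r>1$ hypothesis is essential precisely because it is what promotes the $L^1$-type control used in (i) to the uniform-in-time equicontinuity needed for a $C([0,\mathcal T];B)$ statement.
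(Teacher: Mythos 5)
The paper offers no proof of this lemma; it is quoted directly from Simon \cite{Simon}, so there is nothing internal to compare your argument against. Judged on its own terms, your outline follows the standard route (Ehrling's inequality, translation/modulus-of-continuity estimates from the derivative bound, then Fr\'echet--Kolmogorov for (i) and Arzel\`a--Ascoli for (ii)), and part (ii) is essentially complete: H\"older's inequality gives the uniform $Y$-modulus $|t-s|^{1-1/r}$, Ehrling upgrades it to a $B$-modulus uniformly over $\mathcal F$, and pointwise relative compactness of the slices in $B$ comes from the uniform $X$-bound together with $X\hookrightarrow\hookrightarrow B$.

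The genuine gap is in part (i): you identify the mismatch between the $L^p$-in-time $X$-bound and the $L^1$-in-time $Y$-translation estimate as ``the main obstacle,'' and then defer its resolution to Simon's Corollary~4 --- which is exactly the statement being proved, so as written the argument is circular at its decisive step. The gap is real but closable in two lines. From $f(t+h)-f(t)=\int_t^{t+h}\partial_sf\,ds$ one gets both the pointwise bound $\|(\tau_hf-f)(t)\|_Y\le\|\partial_tf\|_{L^1(0,T;Y)}$ for a.e.\ $t$ and the integrated bound $\|\tau_hf-f\|_{L^1(0,T-h;Y)}\le h\,\|\partial_tf\|_{L^1(0,T;Y)}$; interpolating, $\|\tau_hf-f\|_{L^p(0,T-h;Y)}\le\|\tau_hf-f\|_{L^\infty(0,T-h;Y)}^{1-1/p}\,\|\tau_hf-f\|_{L^1(0,T-h;Y)}^{1/p}\le h^{1/p}\sup_{f\in\mathcal F}\|\partial_tf\|_{L^1(0,T;Y)}$. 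Applying Ehrling in $L^p$ in time then yields $\|\tau_hf-f\|_{L^p(0,T-h;B)}\le 2\varepsilon\sup_{f\in\mathcal F}\|f\|_{L^p(0,T;X)}+C_\varepsilon h^{1/p}\sup_{f\in\mathcal F}\|\partial_tf\|_{L^1(0,T;Y)}$, which is made uniformly small by choosing $\varepsilon$ first and then $h$. One further correction: the second hypothesis of the vector-valued Fr\'echet--Kolmogorov criterion is not a ``pointwise tightness'' of the values $f(t)$ (these are not defined pointwise for an $L^p$ class) but the relative compactness in $B$ of the time averages $\int_{t_1}^{t_2}f\,dt$; this holds here because those averages are bounded in $X$ and $X\hookrightarrow\hookrightarrow B$.
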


\section{System with full viscosities and full diffusivity}
\label{sec3}
In this section, we are concerned with energy estimates for the strong solutions to the following regularized system, with both full viscosities and full diffusivity,
\begin{eqnarray}
    &\partial_tv+(v\cdot\nabla_H)v+w\partial_zv-\Delta_Hv-\varepsilon\partial_z^2v  \nonumber\\
    &+f_0\overrightarrow{k}\times v+\nabla_H\left(p_s(x,y,t)-\int_{-h}^zT(x,y,\xi,t)d\xi\right)=0,\label{eq1}\\
    &\int_{-h}^h\nabla_H\cdot v(x,y,z,t)dz=0,\label{eq2}\\
    &\partial_tT+v\cdot\nabla_HT+w
    \partial_zT -\varepsilon\Delta_H T-\partial_z^2T=0, \label{eq3}
\end{eqnarray}
with $w$ given by (\ref{w}),
subject to the boundary and initial conditions (\ref{BC1})--(\ref{IC}).

For any periodic functions $v_0, T_0\in H^2(\Omega)$, which are even and odd in $z$, respectively, there is a unique strong solution to the above system, subject to the boundary and initial conditions (\ref{BC1})--(\ref{IC}), and in fact, we have the following proposition.

\begin{proposition}
  \label{prop3.1}
Suppose that the periodic functions $v_0,T_0\in H^2(\Omega)$ are even and odd in $z$, respectively, with $\int_{-h}^h\nabla_H\cdot v_0(x,y,z)dz=0$. Then for any $\varepsilon>0$, there is a unique global strong solution $(v,T)$ to system (\ref{eq1})--(\ref{eq3}),
subject to the boundary and initial conditions (\ref{BC1})--(\ref{IC}), such that
\begin{align*}
  &(v,T)\in L^\infty_{\text{loc}}([0,\infty);H^2(\Omega))\cap C([0,\infty);H^1(\Omega)),\\
  &(v,T)\in
  L^2_{\text{loc}}([0,\infty);H^3(\Omega)),\quad(\partial_tv,\partial_tT)\in L^2_{\text{loc}}([0,\infty);H^1(\Omega)).
\end{align*}
\end{proposition}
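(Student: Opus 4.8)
The plan is to observe that, for each fixed $\varepsilon>0$, system (\ref{eq1})--(\ref{eq3}) is precisely the primitive equations with \emph{full} (albeit anisotropic) viscosity $-\Delta_Hv-\varepsilon\partial_z^2v$ in the momentum equation and \emph{full} (anisotropic) diffusivity $-\varepsilon\Delta_HT-\partial_z^2T$ in the temperature equation. Consequently the global well-posedness is of exactly the same nature as the classical result of Cao--Titi \cite{CAOTITI2} (see also Cao--Li--Titi \cite{CAOLITITI1,CAOLITITI2}), and the anisotropy of the dissipation coefficients plays no essential role, since all the energy arguments only exploit the presence of some full Laplacian-type operator in each equation. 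I would therefore reproduce the Cao--Titi scheme, treating the fixed, positive $\varepsilon$ as an ordinary viscosity constant, and establish existence through uniform a priori estimates for Galerkin approximations, together with a standard uniqueness argument.

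Concretely, I would first construct local-in-time approximate solutions by a Galerkin procedure, using a basis adapted to the periodic and even/odd symmetry conditions (\ref{BC1})--(\ref{BC2}) and to the constraint (\ref{MAIN2}); recall that $w$ and $p_s$ are not independent unknowns but are slaved to $v$ and $T$ through (\ref{w}) and (\ref{ps}), so the Galerkin system is a genuine finite-dimensional ODE system, solvable on a maximal interval by Cauchy--Lipschitz. Uniform (in the Galerkin dimension) a priori estimates then both promote these approximations to a strong solution and rule out finite-time blow-up. The first layer is the basic $L^2$ energy estimate obtained by testing (\ref{eq1}) with $v$ and (\ref{eq3}) with $T$, yielding $v,T\in L^\infty_{\mathrm{loc}}([0,\infty);L^2)\cap L^2_{\mathrm{loc}}([0,\infty);H^1)$; the second is the maximum principle for the transport-diffusion equation (\ref{eq3}), which gives $\|T(t)\|_\infty\le\|T_0\|_\infty$, finite since $T_0\in H^2(\Omega)\hookrightarrow L^\infty(\Omega)$.

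The heart of the matter --- and the step I expect to be the main obstacle --- is the a priori $H^1$ estimate on $v$, because of the nonlocal, quadratic vertical transport term $w\partial_zv$ with $w=-\int_{-h}^z\nabla_H\cdot v\,d\xi$. I would handle it exactly as Cao--Titi: decompose $v=\bar v+\tilde v$ into its barotropic (vertical average) and baroclinic parts, derive an $L^6$-type estimate on $v$, and then close the $\|\nabla v\|_2$ estimate by bounding the troublesome cubic terms with the anisotropic Ladyzhenskaya-type inequalities of Lemmas \ref{lad}--\ref{lem2.3}. Here the full vertical viscosity $\varepsilon\partial_z^2v$ is a genuine help: it supplies the $\varepsilon\|\partial_z\nabla v\|_2^2$ dissipation that is absent in the main system of the paper, so that none of the logarithmic machinery (Lemmas \ref{log} and \ref{LogGron1}) is needed at this stage and a plain Gronwall argument suffices. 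Since these bounds are finite on every finite time interval, the local solution extends globally.

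Once $v,T\in L^\infty_{\mathrm{loc}}H^1\cap L^2_{\mathrm{loc}}H^2$ are in hand, the stated regularity follows by a standard parabolic bootstrap, namely successive $H^1$ and $H^2$ energy estimates on the spatial derivatives of $v$ and $T$, which upgrade the solution to $L^\infty_{\mathrm{loc}}H^2\cap L^2_{\mathrm{loc}}H^3$; reading $\partial_tv,\partial_tT$ directly off the equations then gives $\partial_tv,\partial_tT\in L^2_{\mathrm{loc}}H^1$. I would pass to the limit in the Galerkin scheme using the Aubin--Lions lemma (Lemma \ref{AL}) to secure the strong convergence needed for the nonlinear terms, and recover the time-continuity $C([0,\infty);H^1)$ by a Lions--Magenes interpolation argument combined with the regularity just obtained. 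Finally, uniqueness is established by writing the equations for the difference of two solutions with the same data and running an $L^2$ energy estimate, the quadratic nonlinearities being absorbed by means of the already-established $L^\infty_{\mathrm{loc}}H^2$ bound and Gronwall's inequality.
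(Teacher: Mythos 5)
Your proposal is correct and follows essentially the same route as the paper, which itself gives no details here but simply defers to Proposition 2.1 of Cao--Li--Titi \cite{CAOLITITI1}, i.e., exactly the Galerkin-plus-energy-estimates scheme of the Cao--Titi type that you outline (for fixed $\varepsilon>0$ the system has full, merely anisotropic, dissipation in both equations, so constants may depend on $\varepsilon$, which is harmless at this stage).
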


\begin{proof}
The proof can be given in the same way as in \cite{CAOLITITI1} (see Proposition 2.1 there), and thus we omit it here.
\end{proof}

The strong solutions satisfy the following estimates.

\begin{proposition}\label{prop3.2}
For any $0<\mathcal T<\infty$, we have the following:

(i) Basic energy estimate:
$$
\sup_{0\leq t\leq\mathcal T}\|(v,T)\|_2^2(t)+\int_0^{\mathcal T}\|(\nabla_Hv, \partial_zT,
\sqrt\varepsilon \nabla_H T)\|_2^2dt\leq  Ce^\mathcal{T}(\|v_0\|_2^2+\|T_0\|_2^2),
$$
where $C$ is a positive constant depending only on $h$;

(ii) $L^\infty$ estimate on  $T$:
$$
\sup_{0\leq t\leq \mathcal T}\|T\|_\infty(t)\leq   \|T_0\|_\infty;
$$

(iii) $L^q$ estimate on $v$:
$$
\sup_{0\leq t\leq\mathcal T}\|v\|_q(t)\leq C\sqrt q,\quad\mbox{for every }q\in[4,\infty),
$$
for a positive constant $C$ depending only on $h$, $\mathcal T$, and $\|(v_0,T_0)\|_\infty$.
\end{proposition}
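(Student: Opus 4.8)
The plan is to establish (i)--(iii) by $L^p$ testing of the regularized system \eqref{eq1}--\eqref{eq3}, relying throughout on two $\varepsilon$-independent cancellations: the advection operator $(v\cdot\nabla_H)+w\partial_z$ is skew-symmetric in $L^2$ because $\nabla_H\cdot v+\partial_z w=0$, and the Coriolis term $f_0\overrightarrow{k}\times v$ is pointwise orthogonal to $v$ and to $|v|^{q-2}v$.

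For (i), I would test \eqref{eq3} by $T$ and \eqref{eq1} by $v$ and add. The advective and Coriolis terms drop by the two cancellations, and the surface-pressure term vanishes as well: since $p_s=p_s(x,y,t)$, its contribution reduces to $\int_M\nabla_H p_s\cdot\int_{-h}^h v\,dz$, which is zero after integrating by parts and invoking \eqref{eq2}. The sole surviving coupling, $-\int_\Omega\nabla_H\big(\int_{-h}^zT\,d\xi\big)\cdot v=\int_\Omega\big(\int_{-h}^zT\,d\xi\big)\nabla_H\cdot v$, is controlled by $\tfrac12\|\nabla_H v\|_2^2+C\|T\|_2^2$ via Cauchy--Schwarz in $z$; absorbing the gradient term and applying Gronwall yields (i), the $e^{\mathcal T}$ factor coming from the Gronwall step. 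For (ii) I would multiply \eqref{eq3} by $|T|^{2k-2}T$; the advection drops and both diffusion terms are nonnegative, so $\tfrac{d}{dt}\|T\|_{2k}^{2k}\le0$, giving $\|T(t)\|_{2k}\le\|T_0\|_{2k}$ for every $k$ and, letting $k\to\infty$, the maximum principle $\|T(t)\|_\infty\le\|T_0\|_\infty$.

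Part (iii) is the crux. I would test \eqref{eq1} by $|v|^{q-2}v$; advection and Coriolis again drop, while the viscosity produces the full dissipation $\mathcal D:=\int_\Omega|v|^{q-2}|\nabla_H v|^2+(q-2)\int_\Omega|v|^{q-2}|\nabla_H|v||^2$, the second (scalar-gradient) piece being comparable to $q\,\||v|^{q/2-1}\nabla_H|v|\|_2^2$ and hence carrying a favorable factor of $q$. The temperature-induced pressure $\int_\Omega\big(\int_{-h}^zT\,d\xi\big)\nabla_H\cdot(|v|^{q-2}v)$ is bounded using $\|T\|_\infty\le\|T_0\|_\infty$ from (ii) after splitting $\nabla_H\cdot(|v|^{q-2}v)=|v|^{q-2}\nabla_H\cdot v+(q-2)|v|^{q-3}(v\cdot\nabla_H)|v|$: the first summand carries no power of $q$ and is absorbed into $\int_\Omega|v|^{q-2}|\nabla_H v|^2$, while the $(q-2)$-weighted second summand involves only the scalar gradient $\nabla_H|v|$ and is absorbed into the $q$-enhanced piece of $\mathcal D$. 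This split is precisely where one gains a power of $q$: a crude Young absorption of the whole term into the vector dissipation produces $\sim q^2\|v\|_q^{q-2}$ and yields only $\|v\|_q\lesssim q$, whereas the split yields $\sim q\,\|v\|_q^{q-2}$, which is exactly what is needed.

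The main obstacle is the surface-pressure term $-\int_\Omega\nabla_H p_s\cdot|v|^{q-2}v$. Here $p_s$ must be recovered from the elliptic problem \eqref{ps} and controlled in $L^s(M)$ by Calder\'on--Zygmund/Riesz estimates whose operator norms themselves grow with $s$; since the right-hand side of \eqref{ps} is quadratic in $v$ through $\overline{v\otimes v}$, care is required so that this term does not force higher $L^p$ norms of $v$ and can still be absorbed into $\mathcal D$ plus a multiple of $q\|v\|_q^{q-2}$. Granting that all contributions are organized into a differential inequality of the form $\tfrac{d}{dt}\|v\|_q^2\le C(\|v\|_q^2+q)$ with $C$ independent of $q$ (depending only on $h$ and $\|T_0\|_\infty$), Gronwall on $[0,\mathcal T]$ together with $\|v_0\|_q\le C(1+\|v_0\|_\infty)|\Omega|^{1/q}$ gives $\|v\|_q^2\le Cq$, i.e.\ the asserted growth $\|v\|_q\le C\sqrt q$, with $C$ depending only on $h,\mathcal T,\|(v_0,T_0)\|_\infty$.
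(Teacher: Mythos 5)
Your treatments of (i) and (ii) coincide with the paper's: the paper tests \eqref{eq1} and \eqref{eq3} by $v$ and $T$, kills the advective, Coriolis and surface-pressure contributions exactly as you describe, bounds the remaining coupling by $C\|T\|_2\|\nabla_Hv\|_2$, and applies Gronwall; for (ii) it multiplies \eqref{eq3} by $|T|^{q-2}T$ and lets $q\to\infty$. Those two parts are correct and essentially identical to the paper.

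Part (iii) is where the comparison breaks down, in two senses. First, the paper does not prove (iii) at all: it cites Proposition 3.1(iii) of \cite{CAOLITITI3} verbatim (noting only that the diffusivity plays no role there), so there is no in-paper argument to match. Second, and more importantly, your sketch has a genuine gap exactly at the point you flag as ``the main obstacle.'' The skeleton is right -- testing with $|v|^{q-2}v$, the skew-symmetry of advection, and in particular the split $\nabla_H\cdot(|v|^{q-2}v)=|v|^{q-2}\nabla_H\cdot v+(q-2)|v|^{q-3}v\cdot\nabla_H|v|$ so that the $(q-2)$-weighted piece is absorbed into the scalar-gradient dissipation $(q-2)\int_\Omega|v|^{q-2}|\nabla_H|v||^2$; this is precisely the mechanism that yields $\sqrt q$ rather than $q$. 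But the term $\int_\Omega\nabla_Hp_s\cdot|v|^{q-2}v\,dxdydz$ is not estimated: you observe that recovering $p_s$ from \eqref{ps} by Calder\'on--Zygmund theory brings in constants growing in the integrability exponent and, through $\overline{v\otimes v}$, norms of $v$ that are not yet controlled, and then you write ``Granting that all contributions are organized into a differential inequality of the form $\frac{d}{dt}\|v\|_q^2\le C(\|v\|_q^2+q)$\dots''. That grant is the whole difficulty of (iii); without it the argument does not close, since a crude bound on the pressure term reintroduces $\|v\|_s$ for $s>q$ and the absorption fails. The standard resolution (carried out in \cite{CAOLITITI3} and the earlier Cao--Titi works) is not a direct CZ bound but a structural one, e.g.\ exploiting the barotropic--baroclinic decomposition so that the fluctuation part of $v$ satisfies a pressure-free equation, together with careful tracking of how all constants depend on $q$. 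As written, your proposal establishes (i) and (ii) but only reduces (iii) to an unproved claim.
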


\begin{proof}
(i) Multiplying equations (\ref{eq1}) and (\ref{eq3}) by $v$ and $T$, respectively, summing the resulting equations, and integrating over $\Omega$, it follows from integration by parts, using (\ref{eq2}), and the H\"older and Young inequalities that
\begin{align*}
  &\frac{1}{2}\frac{d}{dt}\int_\Omega(|v|^2+|T|^2)dxdydz\\
  &+\int_\Omega\Big(|\nabla_Hv|^2+\varepsilon|\partial
  _zv|^2+\varepsilon|\nabla_HT|^2+|\partial
  _zT|^2\Big)dxdydz\\
  =&-\int_\Omega\left(\int_{-h}^zTd\xi\right)\nabla_H\cdot vdxdydz\\
  \leq&C\|T\|_2\|\nabla_Hv\|_2\leq\frac{1}{2}\|\nabla_Hv\|_2^2+C\|T\|_2^2,
\end{align*}
and thus
$$
\frac{d}{dt}\|(v,T)\|_2^2+\|(\nabla_Hv,\partial_zT,\sqrt\varepsilon\partial_zv,\sqrt\varepsilon\nabla_H T)\|_2^2)
\leq C\|T\|_2^2,
$$
from which, by the Gronwall inequality, one obtains (i).

(ii) Multiplying equation (\ref{eq3}) by $|T|^{q-2}T$, with $q\in[2,\infty)$, and integrating the resultant over $\Omega$, it follows from integration by parts and using (\ref{eq2}) that
\begin{equation*}
  \frac{1}{q}\frac{d}{dt}\|T\|_q^q\leq0,
\end{equation*}
which implies $\sup_{0\leq t\leq \mathcal T}\|T\|_q\leq \|T_0\|_q$. The conclusion follows by taking $q\rightarrow\infty$ and using the fact that $\|T\|_q\rightarrow\|T\|_\infty$, as $q\rightarrow\infty$.

(iii) This has been proven in (iii) of Proposition 3.1 in \cite{CAOLITITI3} (note that the diffusivity plays no role for the proof of (iii) there) and, thus, we omit the details here.
\end{proof}

\subsection{A priori $L^\infty_t(H^1_{\textbf{x}})$ estimate on $v$}

In this subsection, we establish the a priori estimates a priori $L^\infty_t(H^1_{\textbf{x}})$ estimates of $v$. As it will be seen below, we achieve the a priori
$L^\infty_t(H^1_{\textbf{x}})$ on $v$ not through performing directly
the energy inequalities to $v$, but by carrying out the corresponding $L^\infty_t(L^2_{\textbf{x}})$ estimates for $u, \eta$ and $\theta$ defined below:
\begin{equation}\label{etatheta}
  u:=\partial_zv,\quad\eta:=\nabla_H\cdot v+\Phi,\quad\theta:=\nabla_H^\perp\cdot v,
\end{equation}
where $\nabla_H^\perp=(-\partial_y,\partial_x)$ and $\Phi$ is given by
\begin{equation}\label{Phi}
\Phi(x,y,z,t)=\int_{-h}^zT(x,y,\xi,t)d\xi-\frac{1}{2h} \int_{-h}^h\left(\int_{-h}^zT(x,y,\xi,t)d\xi\right) dz.
\end{equation}

Differentiating equation (\ref{eq1}) with respect to $z$ yields
\begin{align}
  \label{4.1u}
\partial_tu&+(v\cdot\nabla_H)u+w\partial_zu-\Delta_Hu-\varepsilon\partial_z^2u\nonumber\\
&+f_0k\times u+(u\cdot\nabla_H)v-(\nabla_H\cdot v)u-\nabla_HT=0.
\end{align}
The functions $\eta$ and $\theta$ satisfy (see Appendix A for the derivation)
\begin{align}
  \partial_t\eta-\Delta_H\eta-\varepsilon\partial_z^2\eta=&-\nabla_H\cdot[(v\cdot\nabla_H)v  +w\partial_zv+f_0k\times v]+(1-\varepsilon)\partial_zT\nonumber\\
  &-wT-\int_{-h}^z(\nabla_H\cdot(vT)-\varepsilon\Delta_HT)d\xi+f(x,y,t) \label{etaeps}
\end{align}
and
\begin{equation}
  \partial_t\theta-\Delta_H\theta-\varepsilon\partial_z^2\theta=-\nabla_H^\perp\cdot[(v\cdot\nabla_H)v  +w\partial_zv+f_0k\times v],\label{thetaeps}
\end{equation}
respectively, with the function $f=f(x,y,t)$ given by
\begin{align}
  f(x,y,t)=&\frac{1}{2h}\int_{-h}^h\left(\int_{-h}^z (\nabla_H\cdot(vT)-\varepsilon\Delta_HT)d\xi+wT\right)
   dz\nonumber\\
   &+\frac{1}{2h}\int_{-h}^h \nabla_H\cdot \big(\nabla_H\cdot(v\otimes v)+f_0\overrightarrow{k}\times v\big)dz.\label{f}
\end{align}

For the convenience, we first prove the following proposition which will be used later:

\begin{proposition}\label{prop3.2-1}
Let $\eta$ and $\theta$ be as in (\ref{etatheta}). The following estimates hold:
\begin{eqnarray*}
\|\eta\|_2^2+\|\theta\|_2^2&\leq& C(\|\nabla_Hv\|_2^2+1), \\
  \|\nabla_Hv\|_2^2&\leq& C(\|\eta\|_2^2+\|\theta\|_2^2+1), \\
    \left(\int_{-h}^h\|\nabla_Hv\|_{4,M}^2dz\right)^{\frac12}
    &\leq&C\left(\|(\eta,\theta)\|_2^{\frac12}\|\nabla_H(\eta,\theta)\|_2^{\frac12} +\|(\eta,\theta)\|_2+1\right),\\
    \int_{-h}^h\|\nabla_Hv\|_{4,M}dz
    &\leq&C\left(\|(\eta,\theta)\|_2^{\frac12}\|\nabla_H(\eta,\theta)\|_2^{\frac12} +\|(\eta,\theta)\|_2+1\right),
  \end{eqnarray*}
  where $C$ is a positive constant depending only on $\|T_0\|_\infty$ and $h$.
\end{proposition}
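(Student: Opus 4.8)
The plan is to reduce all four inequalities to two facts: a uniform pointwise bound on the auxiliary function $\Phi$, and the principle that the full horizontal gradient $\nabla_Hv$ of a periodic vector field is controlled by its horizontal divergence and horizontal curl---exactly in $L^2$, and up to a constant in $L^p$. First I would record the bound on $\Phi$. Since $\|T\|_\infty\le\|T_0\|_\infty$ by Proposition \ref{prop3.2}(ii), the representation \eqref{Phi} exhibits $\Phi$ as (a difference of) $z$-integrals of a bounded temperature, so $\|\Phi\|_\infty\le C\|T_0\|_\infty$ with $C=C(h)$; in particular $\|\Phi\|_2\le C$ and $\|\Phi(\cdot,\cdot,z)\|_{4,M}\le C$ for a.e.\ $z$.

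For the first inequality, $\theta=\nabla_H^\perp\cdot v$ and $\nabla_H\cdot v=\eta-\Phi$ are, up to sign, entries of $\nabla_Hv$, so $\|\theta\|_2\le C\|\nabla_Hv\|_2$ and $\|\eta\|_2\le\|\nabla_H\cdot v\|_2+\|\Phi\|_2\le C(\|\nabla_Hv\|_2+1)$, which gives $\|\eta\|_2^2+\|\theta\|_2^2\le C(\|\nabla_Hv\|_2^2+1)$. For the second inequality, the key is the identity
\[
\|\nabla_Hv\|_2^2=\|\nabla_H\cdot v\|_2^2+\|\nabla_H^\perp\cdot v\|_2^2,
\]
valid for any $(x,y)$-periodic field, which I would verify mode by mode in the horizontal Fourier series: for each horizontal wave number $k\ne0$ one has $|k_1\hat v^1+k_2\hat v^2|^2+|{-}k_2\hat v^1+k_1\hat v^2|^2=|k|^2|\hat v|^2$, and integrating in $z$ gives the identity. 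Substituting $\nabla_H\cdot v=\eta-\Phi$ and $\nabla_H^\perp\cdot v=\theta$ and invoking the $\Phi$ bound yields $\|\nabla_Hv\|_2^2\le C(\|\eta\|_2^2+\|\theta\|_2^2+1)$.

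The third inequality is the main point, and its crux is the $L^p$ version of the same elliptic principle: for a.e.\ fixed $z$, the field $v(\cdot,\cdot,z)$ on the torus $M$ satisfies
\[
\|\nabla_Hv(\cdot,\cdot,z)\|_{4,M}\le C\,\|(\nabla_H\cdot v,\nabla_H^\perp\cdot v)(\cdot,\cdot,z)\|_{4,M},
\]
because $\nabla_Hv$ is recovered from the horizontal divergence and curl by Fourier multipliers with symbols $k_ak_b/|k|^2$, i.e.\ (iterated) Riesz transforms, which are bounded on $L^4(M)$. Writing $\nabla_H\cdot v=\eta-\Phi$, squaring, integrating in $z$, and taking square roots gives $\bigl(\int_{-h}^h\|\nabla_Hv\|_{4,M}^2\,dz\bigr)^{1/2}\le C\bigl(\int_{-h}^h\|(\eta,\theta)\|_{4,M}^2\,dz\bigr)^{1/2}+C$, the $\Phi$-contribution being absorbed into the constant via the uniform bound above. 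Applying Lemma \ref{lem2.3} to $\eta$ and to $\theta$ then bounds the remaining term by $C\bigl(\|(\eta,\theta)\|_2^{1/2}\|\nabla_H(\eta,\theta)\|_2^{1/2}+\|(\eta,\theta)\|_2\bigr)$, which is exactly the asserted right-hand side.

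Finally, the fourth inequality follows from the third by Cauchy--Schwarz in $z$, namely $\int_{-h}^h\|\nabla_Hv\|_{4,M}\,dz\le\sqrt{2h}\,\bigl(\int_{-h}^h\|\nabla_Hv\|_{4,M}^2\,dz\bigr)^{1/2}$, so no new estimate is needed. I expect the only step requiring genuine care to be the justification of the $L^4$ elliptic estimate on the periodic box: the $L^2$ statement is an exact Fourier identity, whereas the $L^p$ version relies on the $L^p$-boundedness of the associated singular integral operators, which is standard on the torus but should be cited explicitly.
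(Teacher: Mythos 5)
Your proposal is correct and follows essentially the same route as the paper: bound $\Phi$ in $L^\infty$ via $\|T\|_\infty\le\|T_0\|_\infty$, then control $\nabla_Hv$ by $(\nabla_H\cdot v,\nabla_H^\perp\cdot v)=(\eta-\Phi,\theta)$ through the (two-dimensional, horizontal) elliptic estimates in $L^2$ and $L^4$, finish the third inequality with Lemma \ref{lem2.3}, and deduce the fourth by H\"older in $z$. The only difference is presentational: where the paper simply invokes ``the elliptic estimates,'' you spell them out as an exact Fourier identity in $L^2$ and as $L^4$-boundedness of the associated Riesz-transform multipliers on the torus, which is a legitimate justification of the same step.
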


\begin{proof}
By Proposition \ref{prop3.2}, and recalling the definition of $\Phi$, we have $\|\Phi\|_\infty\leq C\|T\|_\infty\leq C\|T_0\|_\infty$. The first conclusion follows directly from the definitions of $\eta, \theta$, and $\Phi$. By the elliptic estimates, we have
\begin{align*}
  \|\nabla_Hv\|_2^2\leq&C(\|\nabla_H\cdot v\|_2^2+\|\nabla_H^\perp\cdot v\|_2^2)\leq C(\|\eta\|_2^2+\|\theta\|_2^2+\|\Phi\|_2^2)\\
  \leq& C(\|\eta\|_2^2+\|\theta\|_2^2+1),
\end{align*}
proving the second estimate.
For the third estimate, by the elliptic estimates and Lemma \ref{lem2.3}, we have
\begin{eqnarray*}
  \int_{-h}^h\|\nabla_Hv\|_{4,M}^2dz&\leq& C\int_{-h}^h(\|\nabla_H\cdot v\|_{4,M}^2+\|\nabla_H^\perp\cdot v\|_{4,M}^2)dz\\
  &\leq&C\int_{-h}^h(\|(\eta,\theta)\|_{4,M}^2+\|\Phi\|_{4,M}^2)dz \\
  &\leq&C\left(\|(\eta,\theta)\|_2^{\frac12}\|\nabla_H(\eta,\theta)\|_2^{\frac12} +\|(\eta,\theta)\|_2+1\right),
\end{eqnarray*}
while the last inequality follows by applying the H\"older inequality to the third one.
\end{proof}

The energy inequality for $(u,\eta,\theta)$ is contained in the next proposition.

\begin{proposition}
  \label{propapriH1}
Given $\mathcal T\in(0,\infty)$ and assume that $\varepsilon\in(0,1)$.
Let $\eta, \theta$ and $u$ be as in (\ref{etatheta}). We have the following energy inequality:
\begin{align*}
&\frac{d}{dt}\left(\|(\theta,\eta,u)\|_2^2 +\frac{\|u\|_4^4}{2}\right)+ \|\nabla_H(\theta,\eta,u)\|_2^2 +\||u|\nabla_Hu\|_2^2 +\|\sqrt\varepsilon
\partial_z(\theta,\eta,u)\|_2^2\nonumber\\
  \leq&C(\|v\|_\infty^2+\|\nabla_Hv\|_2^2+1)(\|(\theta,\eta)\|_2^2 +\|u\|_4^4+1) +C\|(\partial_zT,\sqrt\varepsilon \nabla_HT)\|_2^2
\end{align*}
for any $t\in(0,\mathcal T)$, where $C$ is a positive constant depending only on $h, \mathcal T, $ and $\|T_0\|_\infty$.
\end{proposition}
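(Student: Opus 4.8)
The plan is to derive the stated inequality by carrying out four $L^2$-type energy estimates in parallel — testing the $\theta$-equation (\ref{thetaeps}) by $\theta$, the $\eta$-equation (\ref{etaeps}) by $\eta$, and the $u$-equation (\ref{4.1u}) by both $u$ and $|u|^2u$ — and then summing. In each estimate the advective operator $(v\cdot\nabla_H)+w\partial_z$ is skew-symmetric: integrating by parts in the horizontal variables and using $\partial_zw=-\nabla_H\cdot v$ together with periodicity, the two transport contributions cancel, and the Coriolis term $f_0\overrightarrow{k}\times(\cdot)$ drops by antisymmetry. The dissipative operators produce exactly the left-hand side: $-\Delta_H$ and $-\varepsilon\partial_z^2$ give $\|\nabla_H(\theta,\eta,u)\|_2^2$ and $\varepsilon\|\partial_z(\theta,\eta,u)\|_2^2$ from the first three tests, while testing (\ref{4.1u}) by $|u|^2u$ yields $\tfrac14\tfrac{d}{dt}\|u\|_4^4$ together with the degenerate dissipation $\||u|\nabla_Hu\|_2^2$ (plus a nonnegative $\varepsilon$-term). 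It then remains to bound every forcing and lower-order term by the right-hand side of the claim, reserving a fixed small fraction of each dissipation term for absorption by Young's inequality.

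For the $\theta$- and $\eta$-estimates I would integrate the curl/divergence back by parts so that the forcing $-\nabla_H^\perp\cdot N$ (resp. $-\nabla_H\cdot N$), with $N=(v\cdot\nabla_H)v+w\partial_zv+f_0\overrightarrow{k}\times v$, is paired with $\nabla_H^\perp\theta$ (resp. $\nabla_H\eta$); this reduces matters to estimating $\|N\|_2$. The quadratic term is controlled by $\|(v\cdot\nabla_H)v\|_2\le\|v\|_\infty\|\nabla_Hv\|_2$ and the Coriolis term by $|f_0|\|v\|_2$, and Proposition \ref{prop3.2-1} converts every $\nabla_Hv$ bound into an $(\eta,\theta)$ bound, which is what makes the factor $\|v\|_\infty^2+\|\nabla_Hv\|_2^2+1$ appear. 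The extra temperature terms in (\ref{etaeps}) are handled as follows: $(1-\varepsilon)\partial_zT$ and $-wT$ (with $\|w\|_2\le C\|\nabla_Hv\|_2$ from (\ref{w}) and $\|T\|_\infty\le\|T_0\|_\infty$ by Proposition \ref{prop3.2}) give the $\|\partial_zT\|_2^2$ contribution together with lower-order terms; the nonlocal terms are rewritten as $\nabla_H\cdot\int_{-h}^z(vT)\,d\xi$ and $\varepsilon\nabla_H\cdot\int_{-h}^z\nabla_HT\,d\xi$ and, after integrating $\nabla_H$ onto $\eta$, are bounded by $\tfrac14\|\nabla_H\eta\|_2^2+C\|T_0\|_\infty^2\|v\|_2^2+C\|\sqrt\varepsilon\nabla_HT\|_2^2$, producing the claimed $\|\sqrt\varepsilon\nabla_HT\|_2^2$ term. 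Finally the $(x,y)$-only forcing $f$ in (\ref{f}) integrates to zero against $\eta$, since $\int_{-h}^h\eta\,dz=0$ by the constraint (\ref{eq2}) and the mean-zero design of $\Phi$ in (\ref{Phi}); this cancellation is precisely the reason the correction was built into $\eta$.

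For the two $u$-estimates the dangerous self-interactions are $(u\cdot\nabla_H)v\cdot u$, $-(\nabla_H\cdot v)|u|^2$ and their $|u|^2$-weighted analogues. I would integrate by parts in the horizontal variables to move the derivative off $v$; each resulting term then carries an undifferentiated $v$ times a product of $u$ with one horizontal derivative of $u$, and is bounded by $\|v\|_\infty\|\nabla_Hu\|_2\|u\|_2$ in the $L^2$ test and by $\|v\|_\infty\||u|\nabla_Hu\|_2\|u\|_4^2$ in the $L^4$ test. By Young this gives $\tfrac14\|\nabla_Hu\|_2^2+\tfrac18\||u|\nabla_Hu\|_2^2+C\|v\|_\infty^2(\|u\|_2^2+\|u\|_4^4)$, and $\|u\|_2^2\le C(\|u\|_4^4+1)$ puts it into the claimed form. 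The pressure-type term $-\nabla_HT$ is integrated by parts into $\int_\Omega T\,\nabla_H\cdot u$ and $\int_\Omega T\,\nabla_H\cdot(|u|^2u)$, and absorbed into the two dissipation terms at the cost of $C\|T_0\|_\infty^2(\|u\|_2^2+1)$, using $\|\nabla_H\cdot u\|_2\le\|\nabla_Hu\|_2$.

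The main obstacle is the single coupling term $w\partial_zv=wu$ inside $N$, which is nonlocal through $w=-\int_{-h}^z\nabla_H\cdot v\,d\xi$ and admits no integration by parts removing the derivative. Here I would use $|w|\le\int_{-h}^h|\nabla_Hv|\,d\xi$ and bound $\|wu\|_2^2$ by the anisotropic Ladyzhenskaya inequality (\ref{ineqlad1}) of Lemma \ref{ladlemma}, giving $\|wu\|_2^2\le\big(\int_{-h}^h\|\nabla_Hv\|_{4,M}^2\,dz\big)\big(\int_{-h}^h\|u\|_{4,M}^2\,dz\big)$; the first factor is $(\eta,\theta)$-controlled by Proposition \ref{prop3.2-1} and the second is $\le C\|u\|_4^2$ by Hölder in $z$. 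Pairing $\|wu\|_2$ with $\|\nabla_H(\eta,\theta)\|_2$ and applying Young then yields precisely $C(\|\nabla_Hv\|_2^2+1)\|u\|_4^4$ after invoking $\|(\eta,\theta)\|_2^2\le C(\|\nabla_Hv\|_2^2+1)$. This is exactly what forces $\|u\|_4^4$ into the energy functional and is the structural reason the $L^4$-test of the $u$-equation, with its degenerate dissipation $\||u|\nabla_Hu\|_2^2$, is carried out alongside the $L^2$ ones; balancing the several fractional Young's inequalities so that every top-order term is absorbed, rather than any single estimate in isolation, is where the genuine care is required.
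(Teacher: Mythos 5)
Your proposal is correct and follows essentially the same route as the paper: test (\ref{thetaeps}) by $\theta$, (\ref{etaeps}) by $\eta$ (using $\int_{-h}^h\eta\,dz=0$ to kill $f$), and (\ref{4.1u}) by $(|u|^2+1)u$ (your separate $u$ and $|u|^2u$ tests are the same thing), with the forcing $N$ paired against $\nabla_H^\perp\theta$ and $\nabla_H\eta$, Proposition \ref{prop3.2-1} converting $\nabla_Hv$ bounds into $(\eta,\theta)$ bounds, and Lemma \ref{ladlemma} handling the nonlocal term $w\partial_zv$ exactly as in the paper's estimate (\ref{use1-1}). Your identification of $w\partial_zv$ as the term that forces $\|u\|_4^4$ into the energy functional matches the paper's structure precisely.
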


\begin{proof}
Multiplying equation (\ref{4.1u}) by $(|u|^2+1)u$ and integrating the resulting equation over $\Omega$, it follows from integration by parts, Proposition \ref{prop3.2}, and using the Young inequality that
\begin{align*}
&\frac{d}{dt}\left(\frac{\|u\|_2^2}{2}+\frac{\|u\|_4^4}{4}\right)+\int_\Omega[|\nabla_Hu|^2+\varepsilon|\partial_zu|^2\\
&+|u|^2(|\nabla_Hu|^2+2|\nabla_H|u||^2+\varepsilon |\partial_zu|^2+2\varepsilon|\partial_z|u||^2)]dxdydz\\
  =&\int_\Omega[\nabla_HT+(\nabla_H\cdot v)u-(u\cdot\nabla_H)v]\cdot(|u|^2+1)udxdydz\\
  \leq&C\int_\Omega[|T|(|u|^2+1)|\nabla_Hu|+|v|(|u|^2+1)|u||\nabla_Hu|]dxdydz\\
  \leq&\frac{1}{2}\int_\Omega(|u|^2+1)|\nabla_Hu|^2dxdydz+C\int_\Omega(|u|^2+1)(|T|^2+|v|^2|u|^2)dxdydz\\
  \leq&\frac{1}{2}\int_\Omega(|u|^2+1)|\nabla_Hu|^2dxdydz +C(1+\|v\|_\infty^2)(\|u\|_4^4+1),
\end{align*}
which gives
\begin{eqnarray}\label{estueps}
  &\displaystyle\frac{d}{dt}\left(\frac{\|u\|_2^2}{2}
  +\frac{\|u\|_4^4}{4}\right)+\|(\nabla_Hu,|u|\nabla_Hu,\sqrt\varepsilon \partial_zu)\|_2^2\nonumber\\
  &\displaystyle\leq C(1+\|v\|_\infty^2)(\|u\|_4^4+1)+\frac12(\|\nabla_Hu\|_2^2+ \||u|\nabla_Hu\|_2^2).
\end{eqnarray}

Multiplying equation (\ref{thetaeps}) by $\theta$ and integrating the resulting equation over $\Omega$, it follows from integration by parts that
\begin{equation}
   \frac{1}{2}\frac{d}{dt}\|\theta\|_2^2+\|(\nabla_H\theta,\sqrt \varepsilon\partial_z\theta) \|_2^2
  = \int_\Omega((v\cdot\nabla_H)v+w\partial_zv+f_0\overrightarrow{k}\times v)\cdot\nabla_H^\perp\theta dxdydz.\label{use0}
\end{equation}
By Propositions \ref{prop3.2} and \ref{prop3.2-1}, it follows from the Young inequality that
\begin{eqnarray}
 \int_\Omega((v\cdot\nabla_H)v+f_0\overrightarrow{k}\times v)\cdot\nabla_H^\perp\theta dxdydz
 \leq\frac18\|\nabla_H\theta\|_2^2+C(\|v\|_\infty^2\|\nabla_Hv\|_2^2+\|v\|_2^2)
 \nonumber\\
  \leq \frac18\|\nabla_H\theta\|_2^2 +C(\|v\|_\infty^2+1)( \|(\eta,\theta)\|_2^2+1).\label{use1}
\end{eqnarray}
By Lemma \ref{ladlemma}, Proposition \ref{prop3.2-1}, (\ref{w}), and the H\"older and Young inequalities, we have
\begin{align}
 &\int_\Omega w\partial_zv \cdot\nabla_H^\perp\theta dxdydz \nonumber\\
  \leq& \int_M\left(\int_{-h}^h|\nabla_Hv| dz\right)\left(\int_{-h}^h|u||\nabla_H\theta|dz\right)dxdy\nonumber\\
  \leq&\left(\int_{-h}^h\|\nabla_Hv\|_{4,M}dz\right)  \left(\int_{-h}^h\|u\|_{4,M}^2dz\right)^{\frac{1}{2}} \|\nabla_H\theta\|_2\nonumber\\
  \leq&C\left(\|(\eta,\theta)\|_2^{\frac12}\|\nabla_H(\eta,\theta) \|_2^{\frac12} +\|(\eta,\theta)\|_2+1\right)\|u\|_4\|\nabla_H\theta\|_2
  \nonumber\\
  \leq&\frac18\|\nabla_H(\eta,\theta)\|_2^2+C[ \|(\eta,\theta)\|_2^2\|u\|_4^4+(\|(\eta,\theta)\|_2^2+1)\|u\|_4^2]\nonumber\\
  \leq&\frac18\|\nabla_H(\eta,\theta)\|_2^2+C
  (\|\nabla_Hv\|_2^2+1)(\|u\|_4^4+1).\label{use1-1}
\end{align}
Substituting (\ref{use1}) and (\ref{use1-1}) into (\ref{use0}) yields
\begin{align}
  \frac12\frac{d}{dt}\|\theta\|_2^2+\|(\nabla_H\theta,\sqrt\varepsilon
  \partial_z\theta) \|_2^2
  \leq&C(\|v\|_\infty^2+\|\nabla_Hv\|_2^2+1)(\|(\eta,\theta) \|_2^2+\|u\|_4^4+1)\nonumber\\
  &+\frac14(\|\nabla_H\eta\|_2^2+\|\nabla_H\theta\|_2^2).\label{use2}
\end{align}

Recalling the definitions of $\eta$ and $\Phi$ and using (\ref{eq2}), one has
\begin{equation*}
  \int_{-h}^h\eta(x,y,z,t) dz=\int_{-h}^h[(\nabla_H\cdot v)+\Phi]dz=0.
\end{equation*}
On account of this, multiplying equation (\ref{etaeps}) by $\eta$, and integrating the resultant over $\Omega$, it follows from integration by parts that
\begin{align}
  &\frac{1}{2}\frac{d}{dt}\|\eta\|_2^2+\|\nabla_H\eta\|_2^2+\varepsilon\|\partial_z\eta\|_2^2
  \nonumber\\
  =&\int_\Omega\left[((1-\varepsilon)\partial_zT-wT)\eta +\left(\int_{-h}^z(vT-\varepsilon\nabla_HT)d\xi\right)\cdot\nabla_H\eta\right]dxdydz\nonumber\\
  &+\int_\Omega[(v\cdot\nabla_H)v+w\partial_zv+f_0\overrightarrow{k}\times v]\cdot\nabla_H\eta dxdydz.
  \label{use3}
\end{align}
Same arguments as for (\ref{use1}) and (\ref{use1-1}) yield
\begin{align*}
 &\int_\Omega[(v\cdot\nabla_H)v+w\partial_zv+f_0\overrightarrow{k}\times v]\cdot\nabla_H\eta dxdydz\nonumber\\
 \leq &\frac18\|\nabla_H(\eta,\theta)\|_2^2+C(\|v\|_\infty^2 +\|\nabla_Hv\|_2^2+1)(\|(\eta,\theta) \|_2^2+\|u\|_4^4+1).
\end{align*}
By Propositions \ref{prop3.2} and \ref{prop3.2-1}, it follows from the H\"older and Young inequalities that
\begin{align*}
  &\int_\Omega\left[((1-\varepsilon)\partial_zT-wT)\eta +\left(\int_{-h}^z(vT-\varepsilon\nabla_HT)d\xi\right)
  \cdot\nabla_H\eta\right]dxdydz\nonumber\\
  \leq&(|1-\varepsilon|\|\partial_zT\|_2+\|T\|_\infty\|w\|_2)
  \|\eta\|_2+C(\|T\|_\infty\|v\|_2+ \varepsilon\|\nabla_HT\|_2)\|\nabla_H\eta\|_2\nonumber\\
  \leq&C(\|\partial_zT\|_2+\|\nabla_Hv\|_2)\|\eta\|_2+C(1+ \varepsilon\|\nabla_HT\|_2)\|\nabla_H\eta\|_2\nonumber\\
  \leq&\frac{1}{8}\|\nabla_H\eta\|_2^2+C(\|\partial_zT\|_2^2+\|\nabla_H v\|_2^2+\varepsilon\|\nabla_HT\|_2^2+\|\eta\|_2^2+1)\nonumber\\
  \leq&\frac{1}{8}\|\nabla_H\eta\|_2^2+C(\|\partial_zT\|_2^2 +\varepsilon \|\nabla_HT\|_2^2+\|\eta\|_2^2+\|\theta\|_2^2+1).
\end{align*}
Substituting the above two inequalities into (\ref{use3}) yields
\begin{align*}
   \frac12\frac{d}{dt}\|\eta\|_2^2+\|(\nabla_H\eta,\sqrt\varepsilon
  \partial_z \eta)\|_2^2
  \leq&C(\|\nabla_Hv\|_2^2+\|v\|_\infty^2+1)(\|(\eta,\theta)\|_2^2 +\|u\|_4^4+1)\nonumber\\
  &+C(\|\partial_zT\|_2^2+\varepsilon \|\nabla_HT\|_2^2)+\frac14\|\nabla_H(\eta,\theta)\|_2^2,
\end{align*}
which, summed with (\ref{estueps}) and (\ref{use2}), yields the conclusion.
\end{proof}

Thanks to Proposition \ref{propapriH1} and using the logarithmic type Gronwall inequality, i.e. Lemma \ref{LogGron1}, we can obtain the a priori
$L^\infty_t(L^2_{\textbf{x}})$ estimate on $(u, \eta,\theta)$. In fact, we have the
following corollary:

\begin{corollary}
  \label{apriH1}
Given $\mathcal T\in(0,\infty)$ and let $\varepsilon\in(0,1)$. Let $\eta,\theta,$ and $u$ be as in (\ref{etatheta}).
The following a priori estimate holds:
\begin{align*}
  \sup_{0\leq t\leq\mathcal T}(\|(\eta,\theta,u)\|_2^2 (t) +\|u\|_4^4(t))+\int_0^{\mathcal T}(&\|\nabla_H(\eta,\theta,u)\|_2^2 +\||u||\nabla_Hu\|_2^2\\
  &+\|\sqrt\varepsilon\partial_z(\eta,\theta,u)\|_2^2
  )dt\leq C
\end{align*}
for a positive constant $C$ depending only on $h, \mathcal T, \|(v_0,T_0)\|_\infty$, and $\|\nabla_Hv_0\|_2+\|\partial_zv_0\|_4$; in particular, $C$ is independent of $\varepsilon\in(0,1)$.
\end{corollary}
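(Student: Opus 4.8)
The plan is to insert the energy inequality of Proposition \ref{propapriH1} into the logarithmic Gronwall inequality of Lemma \ref{LogGron1}, after trading the factor $\|v\|_\infty^2$ for a logarithm of the high-order norms by means of Lemma \ref{log}. Write
\[
A(t)=\|(\theta,\eta,u)\|_2^2+\tfrac12\|u\|_4^4,\qquad B(t)=\|\nabla_H(\theta,\eta,u)\|_2^2+\||u|\nabla_Hu\|_2^2+\|\sqrt\varepsilon\,\partial_z(\theta,\eta,u)\|_2^2,
\]
so that Proposition \ref{propapriH1} takes the form $A'+B\le C(\|v\|_\infty^2+\|\nabla_Hv\|_2^2+1)(\|(\theta,\eta)\|_2^2+\|u\|_4^4+1)+C\|(\partial_zT,\sqrt\varepsilon\,\nabla_HT)\|_2^2$, and the target is a bound on $\sup_{[0,\mathcal T]}A+\int_0^{\mathcal T}B$ that does not depend on $\varepsilon\in(0,1)$.

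First I would establish the pointwise-in-time estimate $\|v\|_\infty^2\le C\log(A+B+e)$ with $\varepsilon$-independent $C$. Lemma \ref{log} with $\lambda=\tfrac12$ and a fixed $p\in(3,\infty)$, together with the precise growth $\|v\|_r\le C\sqrt r$ of Proposition \ref{prop3.2}(iii) (which renders $\sup_{r\ge2}\|v\|_r/\sqrt r$ finite and $\varepsilon$-independent), gives $\|v\|_\infty\le C\log^{1/2}(\|v\|_{W^{1,p}(\Omega)}+e)$, so it suffices to dominate $\|v\|_{W^{1,p}}$ by a power of $A+B+e$. The delicate point is that full $H^2$ control of $v$ is not available $\varepsilon$-independently, since estimating $\nabla_H^2v$ through $\nabla_H\eta,\nabla_H\theta$ would bring in $\nabla_HT$, which the system controls only with a factor $\varepsilon$. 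I would therefore avoid $\nabla_H^2v$ and take $p=\tfrac{10}{3}>3$: for $g=\nabla_Hv$, a one-dimensional Sobolev embedding in $z$ followed by the H\"older and Minkowski inequalities in $(x,y)$ yields
\[
\|g\|_{L^{10/3}(\Omega)}^{10/3}\le C\Big(\int_{-h}^h\|g\|_{4,M}^2\,dz\Big)^{4/3}\big(\|\nabla_Hv\|_2^2+\|\nabla_Hu\|_2^2\big)^{1/3},
\]
where the first factor is controlled by the third estimate of Proposition \ref{prop3.2-1} and the second by the definition of $B$ (note $\nabla_Hu=\partial_z g$), both $\varepsilon$-independently. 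Since moreover $\|\partial_zv\|_{10/3}=\|u\|_{10/3}\le C\|u\|_4\le CA^{1/4}$ and $\|v\|_{10/3}\le C\|v\|_{H^1}\le C(\sqrt A+1)$, this gives $\|v\|_{W^{1,10/3}}\le C(A+B+e)^{1/2}$ and hence $\|v\|_\infty^2\le C\log(A+B+e)$.

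Next I would substitute this bound into the energy inequality and, crucially, retain $\|\nabla_Hv\|_2^2$ as a coefficient in $t$ rather than absorbing it into $A$. Using $\|(\theta,\eta)\|_2^2+\|u\|_4^4+1\le C(A+e)$, the inequality becomes
\[
A'+B\le \big[C(\|\nabla_Hv\|_2^2+1)+C\log(A+B+e)\big](A+e)+C\|(\partial_zT,\sqrt\varepsilon\,\nabla_HT)\|_2^2,
\]
which is exactly the hypothesis of Lemma \ref{LogGron1} with $m\equiv0$, the constant $n\equiv C$, $\ell(t)=C(\|\nabla_Hv\|_2^2+1)$ and $f(t)=C\|(\partial_zT,\sqrt\varepsilon\,\nabla_HT)\|_2^2$. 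The structural requirement $n(t)\le K(A(t)+e)^\alpha$ holds trivially for the constant $n$ (take $K=C$, $\alpha=1$), while $\ell,f\in L^1(0,\mathcal T)$ with $\varepsilon$-independent norms by the basic energy estimate Proposition \ref{prop3.2}(i). Lemma \ref{LogGron1} then delivers the asserted $\varepsilon$-independent bound, the initial value $A(0)$ being controlled by $\|\nabla_Hv_0\|_2^2+\|\partial_zv_0\|_4^4$ and $\|T_0\|_\infty$.

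The step I expect to be decisive is precisely this $\varepsilon$-uniformity: one must neither invoke $\nabla_H^2v$ (hence $\nabla_HT$) when bounding $\|v\|_{W^{1,p}}$, nor fold $\|\nabla_Hv\|_2^2$ into $A$, which would manufacture a Riccati-type $(A+e)^2$ nonlinearity and forfeit global control. Observing, via the basic energy estimate, that $\|\nabla_Hv\|_2^2$ is an $L^1_t$ coefficient is what allows the logarithmic Gronwall inequality to close with the constant function $n$, and this is the heart of the argument.
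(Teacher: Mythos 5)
Your overall architecture is exactly the paper's: the same $A$ and $B$, the same use of Lemma \ref{log} with $\lambda=\tfrac12$ and the $\sqrt{q}$-growth of Proposition \ref{prop3.2}(iii) to get $\|v\|_\infty^2\le C\log(A+B+e)$, and the same application of Lemma \ref{LogGron1} with constant $n$, $\ell(t)=C(\|\nabla_Hv\|_2^2+1)\in L^1$ and $f(t)=C\|(\partial_zT,\sqrt\varepsilon\nabla_HT)\|_2^2\in L^1$ from the basic energy estimate. The one place you depart from the paper is the bound on $\|v\|_{W^{1,p}}$, and that is where there is a genuine gap. Your claimed interpolation
\[
\|g\|_{L^{10/3}(\Omega)}^{10/3}\le C\Big(\int_{-h}^h\|g\|_{4,M}^2\,dz\Big)^{4/3}\big(\|g\|_2^2+\|\partial_zg\|_2^2\big)^{1/3},\qquad g=\nabla_Hv,
\]
does not follow from ``a one-dimensional Sobolev embedding in $z$ followed by H\"older and Minkowski.'' The natural starting point is the slice-wise 2D interpolation $\|g\|_{10/3,M}^{10/3}\le\|g\|_{4,M}^{8/3}\|g\|_{2,M}^{2/3}$, and to convert $\int_{-h}^h\|g\|_{4,M}^{8/3}\|g\|_{2,M}^{2/3}\,dz$ into $(\int_{-h}^h\|g\|_{4,M}^{2}\,dz)^{4/3}(\cdots)^{1/3}$ you would need a H\"older exponent $p$ in $z$ with $\tfrac{8}{3}p=2$, i.e.\ $p=\tfrac34<1$; H\"older runs in the wrong direction and cannot produce an exponent $4/3>1$ on $\int_{-h}^h\|g\|_{4,M}^2\,dz$. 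The alternative of extracting $\sup_z\|g\|_{4,M}$ would require $\|\partial_z\nabla_Hv\|_{L^4}$-type control, which is not available. So as written this step is unsupported.

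The good news is that the detour is unnecessary, because the worry motivating it is unfounded. You avoid $\|\eta\|_{H^1}$, $\|\theta\|_{H^1}$ on the grounds that the $z$-derivatives only appear in $B$ with the weight $\sqrt\varepsilon$; but from the definitions (\ref{etatheta})--(\ref{Phi}) one has the identities $\partial_z\eta=\nabla_H\cdot u+T$ and $\partial_z\theta=\nabla_H^\perp\cdot u$, so $\|\partial_z(\eta,\theta)\|_2\le C(\|\nabla_Hu\|_2+\|T_0\|_\infty)$, and $\|\nabla_Hu\|_2^2$ sits in $B$ with no $\varepsilon$ at all. This is precisely how the paper proceeds: it takes $p=4$, uses the 2D elliptic estimate slice-wise to get $\|\nabla_Hv\|_4\le C(\|\eta\|_4+\|\theta\|_4+1)$, the 3D embedding $H^1(\Omega)\hookrightarrow L^4(\Omega)$, and then the identities above to conclude $\|v\|_{W^{1,4}}\le C(A+B)$, whence $\|v\|_\infty^2\le C\log(A+B+e)$ with $\varepsilon$-independent constants. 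Replacing your $L^{10/3}$ step by this argument, the rest of your proof (keeping $\|\nabla_Hv\|_2^2$ as an $L^1_t$ coefficient rather than folding it into $A$, and closing with Lemma \ref{LogGron1}) goes through verbatim.
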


\begin{proof}
Denoting
\begin{eqnarray}
  &&A_2 =\|\theta \|_2^2+\|\eta \|_2^2+\|u \|_2^2+\tfrac{\|u \|_4^4}{2}+e,\label{A2}\\
  &&B_2 =\|\nabla_H(\theta,\eta,u) \|_2^2+\|\sqrt\varepsilon\partial_z(\eta,\theta,u)\|_2^2,\label{B2}
\end{eqnarray}
one obtains
\begin{equation}\label{4.2}
  \frac{d}{dt}A_2 +B_2 \leq C(\|v \|_\infty^2+\|\nabla_Hv \|_2^2+1)A_2+ C(\|\partial_zT \|_2^2+\varepsilon\|\nabla_HT\|_2^2)
\end{equation}
for $t\in(0,\mathcal T)$ and for a positive constant $C$ depending only on $h, \mathcal T,$ and $\|T_0\|_\infty$.

By (iii) of Proposition \ref{prop3.2} and applying Lemma \ref{log}, we have
\begin{align}
  \|v\|_\infty\leq&C\max\left\{1,\sup_{q\geq 2}\frac{\|v\|_q}{\sqrt q}\right\}\log^{\frac{1}{2}}(\|v\|_{W^{1,4}(\Omega)}+e)\nonumber\\
  \leq& C\log^{\frac{1}{2}}(\|v\|_{W^{1,4}(\Omega)}+e).\label{4.3}
\end{align}
Recalling the definitions of $\eta$ and $\theta$ and using the elliptic estimate, it follows from the Sobolev embedding inequality that
\begin{align*}
  \|\nabla_Hv\|_4\leq& C(\|\nabla_H\cdot v\|_4+\|\nabla_H^\perp\cdot v\|_4) \leq C(\|\eta\|_4+\|\theta\|_4+\|\Phi\|_4)\\
  \leq&C(\|\eta\|_4+\|\theta\|_4+1)\leq C(\|\eta\|_{H^1(\Omega)}+\|\theta\|_{H^1(\Omega)}+e)\\
  \leq&C(\|\eta\|_2+\|\theta\|_2+\|\nabla_H\eta\|_2+\|\nabla_H\theta\|_2+\|\partial_z\eta\|_2+\|\partial_z \theta\|_2+1)\\
  \leq&C(\|\eta\|_2+\|\theta\|_2+\|\nabla_H\eta\|_2+\|\nabla_H\theta\|_2+\|\nabla_H\cdot u+T\|_2
  +\|\nabla_H^\perp\cdot u\|_2+1)\\
  \leq&C(\|\eta\|_2+\|\theta\|_2+\|\nabla_H\eta\|_2+\|\nabla_H\theta\|_2 +\|\nabla_Hu\|_2+1).
\end{align*}
Therefore, by Propositions \ref{prop3.1} and \ref{prop3.2-1}, one has
\begin{align*}
  \|v\|_{W^{1,4}(\Omega)}\leq&C(\|v\|_4+\|\partial_zv\|_4+\|\nabla_Hv\|_4)\leq C(\|v\|_{H^1(\Omega)}+\|\partial_zv\|_4+\|\nabla_Hv\|_4)\\
  \leq&C(\|v\|_2+\|\nabla_Hv\|_2+\|\partial_zv\|_2+\|\partial_zv\|_4+\|\nabla_Hv\|_4)\\
  \leq&C(\|v\|_2+\|\eta\|_2+\|\theta\|_2+1+\|u\|_2+\|u\|_4+\|\nabla_Hv\|_4)\\
  \leq&C(1+\|\eta\|_2+\|\theta\|_2+\|u\|_2+\|u\|_4+\|\nabla_H\eta\|_2+\|\nabla_H\theta\|_2 +\|\nabla_Hu\|_2)\\
  \leq&C(A_2+B_2).
\end{align*}

With the aid of the above inequality, it follows from (\ref{4.3}) that
\begin{equation}\label{vinf}
\|v\|_\infty^2\leq C\log(A_2+B_2)
\end{equation}
and, consequently, by (\ref{4.2}), we obtain
\begin{align*}
  \frac{d}{dt}A_2+B_2
  \leq C(\|\nabla_Hv \|_2^2+1+\log(A_2+B_2))A_2+ C(\|\partial_zT \|_2^2+\varepsilon\|\nabla_HT\|_2^2).
\end{align*}
Applying Lemma \ref{LogGron1} to the above inequality and using Proposition \ref{prop3.2}, one obtains the conclusion.
\end{proof}

\subsection{A priori $L^\infty_t(H^1_{\textbf{x}})$ estimate on $u=\partial_zv$} In this subsection, we perform the a priori $L^\infty_t(H^1_{\textbf{x}})$ estimate on $u$. As it will be shown below,
the a priori $L^\infty_t(L^2_{\textbf{x}})$ estimate on $\partial_zu$
can be achieved through performing the energy estimates for $u$ directly, while the desired estimate on $\nabla_Hu$ is done by
carrying out the corresponding estimates for $(\varphi, \psi)$ defined,
below, in (\ref{varphipsi}). We first carry out the $L^\infty_t(L^2_{\textbf{x}})$
estimate for $\partial_zu$.

\begin{proposition}\label{aprizu}
Given $\mathcal T\in(0,\infty)$ and assume $\varepsilon\in(0,1)$. Then, the following a priori estimate holds:
\begin{align*}
  \sup_{0\leq t\leq\mathcal T}\|\partial_zu\|_2^2(t)+\int_0^{\mathcal T}\|(\nabla_H\partial_zu,\sqrt\varepsilon\partial_z^2u)\|_2^2 dt\leq C
\end{align*}
for a positive constant $C$ depending only on $h, \mathcal T, \|(v_0,T_0)\|_\infty$, $\|\nabla_Hv_0\|_2+\|\partial_zv_0\|_4+\|\partial_z^2v_0\|_2$; in particular, $C$ is independent of $\varepsilon\in(0,1)$.
\end{proposition}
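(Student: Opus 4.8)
The plan is to differentiate the equation (\ref{4.1u}) for $u$ once more in $z$ and to run an $L^2$ energy estimate on $\zeta:=\partial_zu=\partial_z^2v$. Using $\partial_zv=u$ and $\partial_zw=-\nabla_H\cdot v$, differentiating (\ref{4.1u}) gives, after collecting the terms that appear twice,
\begin{align*}
\partial_t\zeta+(v\cdot\nabla_H)\zeta+w\partial_z\zeta-\Delta_H\zeta-\varepsilon\partial_z^2\zeta+f_0\overrightarrow{k}\times\zeta=&\,-2(u\cdot\nabla_H)u-(\zeta\cdot\nabla_H)v\\
&+2(\nabla_H\cdot v)\zeta+(\nabla_H\cdot u)u+\nabla_H\partial_zT.
\end{align*}
Testing this against $\zeta$ and integrating over $\Omega$, the transport terms $(v\cdot\nabla_H)\zeta+w\partial_z\zeta$ and the Coriolis term drop out after integration by parts (using $\nabla_H\cdot v+\partial_zw=0$ and $w|_{z=\pm h}=0$), leaving
\begin{align*}
\frac12\frac{d}{dt}\|\zeta\|_2^2+\|\nabla_H\zeta\|_2^2+\varepsilon\|\partial_z\zeta\|_2^2=&\int_\Omega\big[-2(u\cdot\nabla_H)u-(\zeta\cdot\nabla_H)v+2(\nabla_H\cdot v)\zeta\\
&+(\nabla_H\cdot u)u+\nabla_H\partial_zT\big]\cdot\zeta\,dxdydz.
\end{align*}
The whole task is to bound the five terms on the right by a locally integrable multiple of $\|\zeta\|_2^2$ plus a locally integrable remainder, after absorbing a small fraction of $\|\nabla_H\zeta\|_2^2$ into the left-hand side.

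The two cubic-in-$u$ terms, $\int_\Omega(u\cdot\nabla_H)u\cdot\zeta$ and $\int_\Omega(\nabla_H\cdot u)u\cdot\zeta$, are both dominated by $\int_\Omega|u||\nabla_Hu||\zeta|\le\||u|\nabla_Hu\|_2\|\zeta\|_2$; since $\||u|\nabla_Hu\|_2\in L^2(0,\mathcal T)$ by Corollary \ref{apriH1}, the Young inequality converts this into $C\|\zeta\|_2^2$ plus the locally integrable remainder $C\||u|\nabla_Hu\|_2^2$. The temperature term I would integrate by parts horizontally, $\int_\Omega\nabla_H\partial_zT\cdot\zeta=-\int_\Omega\partial_zT\,(\nabla_H\cdot\zeta)\le\tfrac18\|\nabla_H\zeta\|_2^2+C\|\partial_zT\|_2^2$, where $\|\partial_zT\|_2^2\in L^1(0,\mathcal T)$ by Proposition \ref{prop3.2}(i).

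The genuinely delicate contributions are the two that are quadratic in the top-order quantity, namely $\int_\Omega(\zeta\cdot\nabla_H)v\cdot\zeta$ and $\int_\Omega(\nabla_H\cdot v)|\zeta|^2$, both controlled by $\int_\Omega|\nabla_Hv||\zeta|^2$. This is the \emph{main obstacle}, because $\nabla_Hv$ is not bounded and $\zeta$ is precisely the quantity being estimated, so there is no $L^\infty$ factor available to spare. I would handle it anisotropically. Since $v$ is periodic in $z$ and $\partial_z\nabla_Hv=\nabla_Hu$, the pointwise value $|\nabla_Hv(x,y,z)|$ is dominated, uniformly in $z$, by $\tfrac1{2h}\int_{-h}^h|\nabla_Hv|\,dz'+\int_{-h}^h|\nabla_Hu|\,dz'$, whence
$$\int_\Omega|\nabla_Hv||\zeta|^2\,dxdydz\le\int_M\Big(\int_{-h}^h\big(\tfrac{1}{2h}|\nabla_Hv|+|\nabla_Hu|\big)dz\Big)\Big(\int_{-h}^h|\zeta|^2dz\Big)dxdy.$$
Applying Lemma \ref{lad} with $\phi=\tfrac1{2h}|\nabla_Hv|+|\nabla_Hu|$ and $\varphi=\psi=\zeta$ gives the bound $C(\|\nabla_Hv\|_2+\|\nabla_Hu\|_2)\|\zeta\|_2(\|\zeta\|_2+\|\nabla_H\zeta\|_2)$. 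Crucially this carries at most the \emph{first} power of $\|\nabla_H\zeta\|_2$, so the Young inequality absorbs $\tfrac18\|\nabla_H\zeta\|_2^2$ and leaves $C(\|\nabla_Hv\|_2^2+\|\nabla_Hu\|_2^2+1)\|\zeta\|_2^2$; here $\|\nabla_Hv\|_2$ is bounded in time and $\|\nabla_Hu\|_2\in L^2(0,\mathcal T)$, both by Corollary \ref{apriH1} (together with Proposition \ref{prop3.2-1} for $\nabla_Hv$), so the coefficient of $\|\zeta\|_2^2$ is integrable on $(0,\mathcal T)$.

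Collecting everything, the differential inequality takes the form $\frac{d}{dt}\|\zeta\|_2^2+\|\nabla_H\zeta\|_2^2+\varepsilon\|\partial_z\zeta\|_2^2\le\beta(t)\|\zeta\|_2^2+\gamma(t)$ with $\beta,\gamma\in L^1(0,\mathcal T)$, and with their $L^1$-norms bounded, via Corollary \ref{apriH1} and Proposition \ref{prop3.2}, by the stated data and independently of $\varepsilon\in(0,1)$. The ordinary Gronwall inequality then bounds $\sup_{[0,\mathcal T]}\|\zeta\|_2^2$ in terms of $\|\zeta(0)\|_2^2=\|\partial_z^2v_0\|_2^2$ and $\int_0^{\mathcal T}\gamma$, and a subsequent time integration bounds $\int_0^{\mathcal T}(\|\nabla_H\partial_zu\|_2^2+\varepsilon\|\partial_z^2u\|_2^2)$, which is the assertion. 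Note that, unlike in Corollary \ref{apriH1}, no logarithmic Gronwall inequality is needed here: the previously established $L^\infty_t(H^1_{\textbf x})$ control of $v$ already renders every coefficient integrable. The only nonroutine step is the anisotropic treatment of $\int_\Omega|\nabla_Hv||\zeta|^2$ described above.
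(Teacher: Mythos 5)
Your proof is correct and follows essentially the same route as the paper: testing equation (\ref{4.1u}) against $-\partial_z^2u$, as the paper does, produces after integration by parts exactly your energy identity for $\zeta=\partial_zu$, and the paper handles the critical term $\int_\Omega|\nabla_Hv||\partial_zu|^2\,dxdydz$ by the same anisotropic decomposition of $\nabla_Hv$ into its vertical average plus $\int_{-h}^h|\nabla_Hu|\,dz$, a Ladyzhenskaya-type product estimate, and the ordinary Gronwall inequality. The only cosmetic difference is the cubic term $\int_\Omega|u||\nabla_Hu||\partial_zu|\,dxdydz$, which you dispatch directly via the $L^2_t$ bound on $\||u|\nabla_Hu\|_2$ from Corollary \ref{apriH1}, whereas the paper runs one more anisotropic estimate yielding a $\|u\|_4^4$ coefficient; both are valid.
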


\begin{proof}
Multiplying equation (\ref{4.1u}) by $-\partial_z^2u$ and integrating the
resultant over $\Omega$, it follows from integration by parts and the H\"older inequality that
\begin{align}
  &\frac{1}{2}\frac{d}{dt}\|\partial_zu\|_2^2+\|\nabla_H\partial_zu\|_2^2+\varepsilon\|\partial_z^2u\|_2^2\nonumber\\
  =&\int_\Omega[(v\cdot\nabla_H)u+w\partial_zu+(u\cdot\nabla_H)v-(\nabla_H\cdot v)u-\nabla_HT]\cdot\partial_z^2udxdydz\nonumber\\
  =&-\int_\Omega[2(u\cdot\nabla_H) u-2(\nabla_H\cdot v)\partial_zu+ \partial_zu\cdot\nabla_Hv-(\nabla_H\cdot u)u]\cdot\partial_zu dxdydz\nonumber\\
  &-\int_\Omega\partial_zT\nabla_H\cdot\partial_zu dxdydz\nonumber\\
  \leq&3\int_\Omega(|u||\nabla_Hu||\partial_zu|+|\nabla_Hv||\partial_zu|^2) dxdydz+\|\partial_zT\|_2^2+\frac14\|\nabla_H\partial_zu\|_2^2. \label{nazueps1}
\end{align}

We need to estimate the terms $\int_\Omega|u||\nabla_Hu||\partial_zu|dxdydz$ and $\int_\Omega|\nabla_Hv||\partial_zu|^2dxdydz$. Noticing that
$$
|\nabla_Hu(x,y,z,t)|\leq\int_{-h}^h|\nabla_H\partial_zu(x,y,z,t)|dz,
$$
it follows from Lemmas \ref{ladlemma} and \ref{lem2.3} and the H\"older and Young inequalities that
\begin{align}
  &3\int_\Omega|u||\nabla_Hu||\partial_zu|dxdydz\nonumber\\
  \leq&C\int_M\left(\int_{-h}^h|\nabla_H\partial_zu|dz\right)
  \left(\int_{-h}^h|u||\partial_zu|dz\right)dxdy\nonumber\\
  \leq&C\left(\int_{-h}^h\|\nabla_H\partial_zu\|_{2,M}dz\right)
  \left(\int_{-h}^h\|u\|_{4,M}^2dz\right)^{\frac{1}{2}}
  \left(\int_{-h}^h\|\partial_zu\|_{4,M}^2dz\right)^{\frac{1}{2}}\nonumber\\
  \leq&C\|\nabla_H\partial_zu\|_2\|u\|_4
  \left(\|\partial_zu\|_2
  +\|\partial_zu\|_2^{\frac{1}{2}}\|\nabla_H\partial_zu\|_2^{\frac{1}{2}} \right)\nonumber\\
  \leq&\frac18\|\nabla_H\partial_zu\|_2^2+C(\|u\|_4^4+1) \|\partial_zu\|_2^2.\label{20171}
\end{align}
Noticing that
$$
|\nabla_Hv(x,y,z,t)|\leq\frac{1}{2h}\int_{-h}^h|\nabla_Hv|dz+\int_{-h}^h |\nabla_Hu|dz,
$$
it follows from Lemmas \ref{ladlemma} and \ref{lem2.3} and the Young inequality that
\begin{align}
  &3\int_\Omega|\nabla_Hv||\partial_zu|^2dxdydz\nonumber\\
  \leq& 3\int_M\left(\int_{-h}^h(|\nabla_Hv|+|\nabla_Hu|)dz\right)\left(
  \int_{-h}^h|\partial_z u|^2 dz\right)dxdy \nonumber\\
  \leq & 3(\|\nabla_Hv\|_2+\|\nabla_H\|_2)\left(
  \int_{-h}^h\|\partial_zu\|_{4,M}^2dz\right) \nonumber\\
  \leq & C(\|\nabla_Hv\|_2+\|\nabla_Hu\|_2)(\|\partial_zu\|_2^2+\|\partial_zu\|_2 \|\nabla_H\partial_zu\|_2)\nonumber\\
  \leq&\frac18\|\nabla_H\partial_zu\|_2^2+C(\|\nabla_Hv\|_2^2+\|\nabla_H u\|_2^2+1)\|\partial_zu\|_2^2. \label{20172}
\end{align}

Substituting (\ref{20171}) and (\ref{20172}) into (\ref{nazueps1}), one obtains
\begin{align*}
  &\frac{d}{dt}\|\partial_zu\|_2^2+\|\nabla_H\partial_zu\|_2^2+\varepsilon\|\partial_z^2u\|_2^2\nonumber\\
  \leq&C(\|\nabla_Hv\|_2^2+\|\nabla_Hu\|_2^2+ \|u\|_4^4+1)\|\partial_zu\|_2^2+C\|\partial_zT\|_2^2.
\end{align*}
Applying the Gronwall inequality to the above inequality and using
Proposition \ref{prop3.1} and Corollary \ref{apriH1}, the conclusion
follows.
\end{proof}

Before proceeding to obtain estimate on $\nabla_Hu$, we define
\begin{equation}
\varphi:=\nabla_H\cdot u+T, \quad \psi:=\nabla_H^\perp\cdot u.\label{varphipsi}
\end{equation}
Equations satisfied by $(\varphi, \psi)$ are derived as follows.
Applying the horizontal divergence operator $\text{div}_H$, or $\nabla_H\cdot$, to equation (\ref{4.1u}) and noticing that
\begin{eqnarray*}
  \nabla_H\cdot((v\cdot\nabla_H)u)&=&v\cdot\nabla_H(\nabla_H\cdot u)+\nabla_Hv:(\nabla_Hu)^T,\\
  \nabla_H\cdot(w\partial_zu)&=&w\partial_z(\nabla_H\cdot u)+\nabla_Hw\cdot\partial_zu,\\
  \nabla_H\cdot(\overrightarrow{k}\times u)&=&\nabla_H\cdot u^\perp=-\nabla_H^\perp\cdot u=-\psi,
\end{eqnarray*}
one has
\begin{eqnarray*}
  \partial_t(\nabla_H\cdot u)+v\cdot\nabla_H(\nabla_H\cdot u)+w\partial_z(\nabla_H\cdot u)-\Delta_H(\nabla_H\cdot u+T)-\varepsilon\partial_z^2\nabla_H\cdot u\\
  =f_0\psi-\nabla_H\cdot((u\cdot\nabla_H)v-(\nabla_H\cdot v)u)-\nabla_H:(\nabla_Hu)^T-\nabla_Hw\cdot\partial_zu.
\end{eqnarray*}
Adding the above equation with (\ref{eq3}) yields
\begin{eqnarray}
  &&\partial_t\varphi+v\cdot\nabla_H\varphi+w\partial_z\varphi-\Delta_H\varphi -\varepsilon\partial_z^2\varphi\nonumber\\ &=&f_0\psi-\nabla_H\cdot((u\cdot\nabla_H)v-(\nabla_H\cdot v)u)+\varepsilon \Delta_HT+(1-\varepsilon)\partial_z^2T\nonumber\\
 && -\nabla_Hv:(\nabla_Hu)^T -\nabla_Hw\cdot\partial_zu.\label{varphi}
\end{eqnarray}
Applying the operator $\nabla_H^\perp\cdot$ to equation (\ref{4.1u}) and noticing that
\begin{eqnarray*}
  \nabla_H^\perp\cdot((v\cdot\nabla_H)u)&=&v\cdot\nabla_H(\nabla_H^\perp\cdot u)+ \nabla_H^\perp v:(\nabla_Hu)^T\\
  &=&v\cdot\nabla_H\psi+\nabla_H^\perp v:(\nabla_Hu)^T,\\
  \nabla_H^\perp\cdot(w\partial_zu)&=&w\partial_z(\nabla_H^\perp\cdot u) +\nabla_H^\perp w\cdot\partial_zu=w\partial_z\psi+\nabla_H^\perp w\cdot\partial_zu,\\
  \nabla_H^\perp\cdot(\overrightarrow{k}\times u)&=&\nabla_H^\perp\cdot u^\perp=\nabla_H\cdot u,
\end{eqnarray*}
where $u^\perp=(-u^2, u^1)$, one obtains
\begin{eqnarray}
  &&\partial_t\psi+v\cdot\nabla_H\psi+w\partial_z\psi-\Delta_H\psi-\varepsilon \partial_z^2\psi\nonumber\\
  &=&-f_0\nabla_H\cdot u-\nabla_H^\perp\cdot((u\cdot\nabla_H)v-(\nabla_H\cdot v)u)\nonumber\\
  &&-\nabla_H^\perp v:(\nabla_Hu)^T-\nabla_H^\perp w\cdot\partial_zu.\label{psi}
\end{eqnarray}

A priori $L^\infty_t(L^2_{\textbf{x}})$ estimate on $(\varphi,\psi)$ is stated in the
next proposition.

\begin{proposition}\label{aprihu}
Given $\mathcal T\in(0,\infty)$ and assume that $\varepsilon\in(0,1)$. Let $\varphi$ and $\psi$ be given in (\ref{varphipsi}). Then, the following a priori estimate holds:
\begin{align*}
  \sup_{0\leq t\leq\mathcal T}\|(\varphi,\psi)\|_2^2(t)+\int_0^{\mathcal T}\|(\nabla_H\varphi,\nabla_H\psi,\sqrt\varepsilon \partial_z\varphi,\sqrt\varepsilon\partial_z\psi)\|_2^2dt\leq C
\end{align*}
for a positive constant $C$ depending only on $h, \mathcal T, \|(v_0,T_0)\|_\infty$, $\|\nabla_Hv_0\|_2+\|\partial_zv_0\|_{H^1}$; in particular, $C$ is independent of $\varepsilon\in(0,1)$.
\end{proposition}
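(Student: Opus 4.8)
The plan is to perform the $L^2$ energy estimate on the pair $(\varphi,\psi)$ directly from their evolution equations (\ref{varphi}) and (\ref{psi}): multiply (\ref{varphi}) by $\varphi$ and (\ref{psi}) by $\psi$, integrate over $\Omega$, and add. The transport terms $v\cdot\nabla_H(\varphi,\psi)+w\partial_z(\varphi,\psi)$ drop out after integration by parts because $\nabla_H\cdot v+\partial_zw=0$, and the dissipation produces $\|\nabla_H(\varphi,\psi)\|_2^2+\varepsilon\|\partial_z(\varphi,\psi)\|_2^2$ on the left. The two Coriolis contributions $f_0\psi$ in (\ref{varphi}) and $-f_0\nabla_H\cdot u$ in (\ref{psi}) should be combined: writing $\nabla_H\cdot u=\varphi-T$, the $\varphi\psi$ cross terms cancel and only $f_0\int_\Omega T\psi\,dxdydz$ survives, which is harmless by Proposition \ref{prop3.2}. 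The term $\varepsilon\Delta_HT$ gives $-\varepsilon\int_\Omega\nabla_HT\cdot\nabla_H\varphi$, absorbed into $\tfrac18\|\nabla_H\varphi\|_2^2$ plus $C\varepsilon\|\nabla_HT\|_2^2$, which is $L^1_t$ and uniform in $\varepsilon<1$ by the basic energy estimate. The vertical term $(1-\varepsilon)\partial_z^2T$ is the first place where the absence of vertical dissipation for $\varphi$ is felt: integrating by parts in $z$ and using $\partial_z\varphi=\nabla_H\cdot\partial_zu+\partial_zT$ gives $(1-\varepsilon)\int_\Omega\partial_z^2T\,\varphi=-(1-\varepsilon)\int_\Omega\partial_zT\,\nabla_H\cdot\partial_zu-(1-\varepsilon)\|\partial_zT\|_2^2$, where the last term has a good sign and the first is bounded by $C(\|\partial_zT\|_2^2+\|\nabla_H\partial_zu\|_2^2)$, both in $L^1_t$ by Proposition \ref{prop3.2} and Proposition \ref{aprizu}.

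For the nonlinear terms the guiding principle is to move the outer horizontal derivative onto the test function. Thus $-\int_\Omega\nabla_H\cdot[(u\cdot\nabla_H)v-(\nabla_H\cdot v)u]\,\varphi=\int_\Omega[(u\cdot\nabla_H)v-(\nabla_H\cdot v)u]\cdot\nabla_H\varphi$, of the form $\int_\Omega|u||\nabla_Hv||\nabla_H\varphi|$; bounding $|u(x,y,z)|\le\int_{-h}^h|\partial_zu|\,dz$ and using Lemmas \ref{ladlemma} and \ref{lem2.3} with Proposition \ref{prop3.2-1} reduces this to $C(\|\nabla_H\partial_zu\|_2^{1/2}+1)(\|\nabla_H(\eta,\theta)\|_2^{1/2}+1)\|\nabla_H\varphi\|_2$. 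The commutator terms $\nabla_Hv:(\nabla_Hu)^T$ and $\nabla_Hw\cdot\partial_zu$ (and their $\nabla_H^\perp$ analogues in (\ref{psi})) are the hard part, since they carry a full horizontal derivative of $u$, or of $w\sim\int\nabla_H\cdot v$, controlled only in $L^2$. I would integrate these by parts too: for instance $-\int_\Omega\nabla_Hv:(\nabla_Hu)^T\varphi$ becomes, after moving a derivative off $v$, a $(\nabla_H\cdot v)\varphi^2$-type term, a term $\pm\int_\Omega(v\cdot\nabla_HT)\varphi$, and a term bounded by $\int_\Omega|v||\nabla_Hu||\nabla_H\varphi|$, while $\int_\Omega\nabla_Hw\cdot\partial_zu\,\varphi$ splits into $\int_\Omega w(\nabla_H\cdot\partial_zu)\varphi$ and $\int_\Omega w\,\partial_zu\cdot\nabla_H\varphi$. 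Two devices close these. First, the two-dimensional elliptic (div--curl) estimate with $\nabla_H\cdot u=\varphi-T$, $\nabla_H^\perp\cdot u=\psi$ gives, slicewise in $z$, $\|\nabla_Hu\|_{4,M}\le C(\|\varphi\|_{4,M}+\|\psi\|_{4,M}+\|T\|_{4,M})$, so that by Lemma \ref{lem2.3} and $\|T\|_\infty\le C$,
\[
\int_{-h}^h\|\nabla_Hu\|_{4,M}^2\,dz\le C\big(\|(\varphi,\psi)\|_2\|\nabla_H(\varphi,\psi)\|_2+\|(\varphi,\psi)\|_2^2+1\big);
\]
this expresses $\nabla_Hu$ through the norms being estimated plus their dissipation, bypassing the unavailable bound $\nabla_H^2u\in L^2$. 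Second, whenever a factor $\nabla_HT$ is generated (e.g.\ from $\nabla_H(\nabla_H\cdot u)=\nabla_H\varphi-\nabla_HT$), I would integrate by parts once more to move the derivative off $T$ and use $\|T\|_\infty\le C$, so $\nabla_HT$ is never estimated directly.

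Assembling everything, each contribution is dominated by $\tfrac18\|\nabla_H(\varphi,\psi)\|_2^2$ plus a term of the form $\ell(t)\|(\varphi,\psi)\|_2^2+g(t)$. The crux—and the step I expect to require the most care—is to select the Young exponents so that $\ell$ and $g$ are both in $L^1(0,\mathcal T)$ uniformly in $\varepsilon$. For the critical terms this hinges on H\"older in time: factors such as $\|\nabla_H\partial_zu\|_2$ and $\|\nabla_H(\eta,\theta)\|_2$ are only in $L^2_t$, so a naive splitting would leave non-integrable products like $\|\nabla_H\partial_zu\|_2^2\|\nabla_H(\eta,\theta)\|_2$; instead one keeps a coefficient $C(\|\nabla_H\partial_zu\|_2^{4/3}(\|\nabla_H(\eta,\theta)\|_2^{1/2}+1)^{4/3})$ in front of $\|(\varphi,\psi)\|_2^2$ and verifies its integrability by H\"older with exponents $\tfrac32$ and $3$, using $\sup_t\|(\eta,\theta,u,\partial_zu)\|_2\le C$ and the $L^2_t$ gradient bounds from Corollary \ref{apriH1} and Proposition \ref{aprizu}. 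With $\ell,g\in L^1(0,\mathcal T)$ in hand, the differential inequality
\[
\frac{d}{dt}\|(\varphi,\psi)\|_2^2+\|\nabla_H(\varphi,\psi)\|_2^2+\varepsilon\|\partial_z(\varphi,\psi)\|_2^2\le\ell(t)\|(\varphi,\psi)\|_2^2+g(t)
\]
yields the claimed estimate by the ordinary Gronwall inequality—no logarithmic Gronwall is needed here, since all coefficients are already integrable—with a constant independent of $\varepsilon\in(0,1)$.
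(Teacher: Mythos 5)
Your proposal is correct and follows essentially the same route as the paper: an $L^2$ energy estimate on $(\varphi,\psi)$ from (\ref{varphi})--(\ref{psi}), with the anisotropic Lemmas \ref{ladlemma}--\ref{lem2.3}, the slicewise div--curl control of $\nabla_Hu$ via $\varphi-T$ and $\psi$ (this is exactly Proposition \ref{nu4m}), and the ordinary Gronwall inequality with coefficients made $L^1_t$ by Corollary \ref{apriH1} and Proposition \ref{aprizu}. The minor variations (exploiting the Coriolis cancellation, the extra integration by parts on $\nabla_Hv:(\nabla_Hu)^T$ where the paper instead uses $|\nabla_Hu|\le\int_{-h}^h|\nabla_H\partial_zu|\,dz$, and your choice of Young exponents) are cosmetic and do not change the argument.
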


\begin{proof}
Multiplying equation (\ref{varphi}) by $\varphi$ and integrating the resulting equation over $\Omega$, it follows from integration by parts that
\begin{align}
  &\frac12\frac{d}{dt}\|\varphi\|_2^2+\|\nabla_H\varphi\|_2^2 +\varepsilon\|\partial_z\varphi\|_2^2\nonumber\\
  =&\int_\Omega[(f_0\psi+\varepsilon\Delta_HT+(1-\varepsilon)\partial_z^2T) \varphi +((u\cdot\nabla_H)v-\nabla_H\cdot vu)\cdot \nabla_H \varphi ] dxdydz\nonumber\\
  &-\int_\Omega[\nabla_Hv:(\nabla_Hu)^T+\nabla_Hw\cdot\partial_z u]\varphi dxdydz.\label{1700}
\end{align}
Noticing that $\|\partial_z\varphi\|_2\leq\|\nabla_H\partial_zu\|_2+\|\partial_zT\|_2$ and $\|(\varphi,\psi)\|_2^2\leq C(\|\nabla_Hu\|_2^2+1)$, it follows from integrating by parts and the H\"older and Young inequalities that
\begin{align}
  &\int_\Omega[f_0\psi+\varepsilon\Delta_HT+(1-\varepsilon)\partial_z^2T] \varphi dxdydz\nonumber\\
  \leq&C[(\|\nabla_Hu\|_2^2+1)+\varepsilon\|\nabla_HT\|_2\|\nabla_H\varphi\|_2 +\|\partial_zT\|_2(\|\nabla_H\partial_zu\|_2+\|\partial_zT\|_2)] \nonumber\\
  \leq&   \frac18\|\nabla_H\varphi\|_2^2+ +C(\varepsilon\|\nabla_HT\|_2^2+\|\nabla_H \partial_zu\|_2^2+\|\partial_zT\|_2^2+\|\nabla_Hu\|_2^2+1).\label{1701}
\end{align}
Noticing that $|u(x,y,z,t)|\leq\int_{-h}^h|\partial_zu(x,y,z,t)|dz,$
by Lemmas \ref{ladlemma} and \ref{lem2.3}, Proposition \ref{prop3.2-1}, and using the Young inequality, we have
\begin{align}
&\int_\Omega [(u\cdot\nabla_H)v-\nabla_H\cdot vu]\cdot\nabla_H\varphi dxdydz \nonumber\\
  \leq & 2\int_M\left(\int_{-h}^h|\partial_zu|dz\right)\left(\int_{-h}^h|\nabla_Hv| |\nabla_H\varphi|dz\right) dxdy \nonumber\\
  \leq&2\left(\int_{-h}^h\|\partial_zu\|_{4,M}dz\right)\left(\int_{-h}^h\|\nabla_Hv\|_{4,M}^2 \right)^{\frac12}\|\nabla_H\varphi\|_2\nonumber\\
  \leq&C\left(\|\partial_zu\|_2^{\frac12}\|\nabla_H\partial_zu\|_2^{\frac12} +\|\partial_zu\|_2\right)\Big(\|\eta\|_2+\|\theta\|_2 \nonumber\\ &+(\|\eta\|_2+\|\theta\|_2)^{\frac12} (\|\nabla_H\eta\|_2+\|\nabla_H\theta\|_2)^{\frac12}+1\Big) \|\nabla_H\varphi\|_2\nonumber\\
  \leq&\frac18\|\nabla_H\varphi\|_2^2 +C[(\|\partial_zu\|_2^2+\|\eta\|_2^2+\|\theta\|_2^2) (\|\nabla_H\partial_zu\|_2^2\nonumber\\
&+\|\nabla_H\eta\|_2^2+\|\nabla_H \theta\|_2^2)+\|(\partial_zu,\eta,\theta)\|_2^4 +1]\label{1702}
\end{align}
and
\begin{align}
  &-\int_\Omega\nabla_Hv:(\nabla_Hu)^T\varphi dxdydz\nonumber\\
  \leq& \int_M\left(\int_{-h}^h |\nabla_H\partial_zu|dz\right) \left(\int_{-h}^h|\nabla_Hv||\varphi|dz\right) dxdy \nonumber\\
  \leq&\left(\int_{-h}^h\|\nabla_H\partial_zu\|_{2,M}dz\right) \left(\int_{-h}^h\|\nabla_Hv\|_{4,M}^2dz \right)^{\frac12}\left(\int_{-h}^h\|\varphi\|_{4,M}^2dz\right)^{\frac12} \nonumber\\
  \leq& C\|\nabla_H\partial_zu\|_2\left(\|(\eta,\theta)\|_2^{\frac12} \|\nabla_H(\eta,\theta)\|_2^{\frac12}+\|(\eta,\theta)\|_2+1\right) \left(\|\varphi\|_2^{\frac12}\|\nabla_H \varphi\|_2^{\frac12}+\|\varphi\|_2\right)\nonumber\\
  \leq&\frac18\|\nabla_H\varphi\|_2^2+C[\left(\|(\eta,\theta)\|_2^2\| \nabla_H(\eta,\theta)\|_2^2+\|(\eta,\theta)\|_2^4+1\right) \|\varphi\|_2^2+\|\nabla_H\partial_zu\|_2^2]\label{1703}
\end{align}
Applying Lemmas \ref{ladlemma} and \ref{lem2.3} and Proposition \ref{prop3.2-1}, it follows from integrating by parts and the Young inequalities that
\begin{align}
  &-\int_\Omega\nabla_Hw\cdot\partial_zu\varphi dxdydz\nonumber\\
  =&\int_\Omega w(\nabla_H\cdot\partial_zu\varphi+\partial_zu\cdot\nabla_H \varphi)dxdydz \nonumber\\
  \leq&\int_M\left(\int_{-h}^h|\nabla_Hv|dz\right)\left( \int_{-h}^h(|\nabla_H\partial_zu| |\varphi|+|\partial_zu||\nabla_H\varphi|)dz\right)dxdy\nonumber \\
  \leq& \left(\int_{-h}^h\|\nabla_Hv\|_{4,M}dz\right)\left(\int_{-h}^h\|\varphi \|_{4,M}^2dz\right)^{\frac12}\|\nabla_H\partial_zu\|_2\nonumber\\
  &+\left(\int_{-h}^h\|\nabla_Hv\|_{4,M}dz\right)\left(\int_{-h}^h\|\partial_zu \|_{4,M}^2dz\right)^{\frac12}\|\nabla_H\varphi\|_2\nonumber\\
  \leq& C\left(\|(\eta,\theta)\|_2^{\frac12} \|\nabla_H(\eta,\theta)\|_2^{\frac12}+\|(\eta,\theta)\|_2+1\right)
  \Big[\left(\|\varphi\|_2^{\frac12}\|\nabla_H\varphi\|_2^{\frac12} +\|\varphi\|_2\right)\nonumber\\
  &\times \|\nabla_H\partial_zu\|_2+
  \left(\|\partial_zu\|_2^{\frac12}\|\nabla_H\partial_zu\|_2^{\frac12}
  +\|\partial_zu\|_2\right)\|\nabla_H\varphi\|_2\Big]\nonumber\\
  \leq&C\left(\|(\partial_zu,\eta,\theta)\|_2^2\|\nabla_H(\partial_zu, \eta,\theta)\|_2^2+\|(\partial_zu,\eta,\theta)\|_2^4+1\right)(\|\varphi \|_2^2+1)\nonumber\\
  &+\frac18\|\nabla_H\varphi\|_2^2+C\|\nabla_H\partial_zu\|_2^2. \label{1704}
\end{align}
Substituting (\ref{1701})--(\ref{1704}) into (\ref{1700}) yields
\begin{align}
  &\frac{d}{dt}\|\varphi\|_2^2+\|\nabla_H\varphi\|_2^2+\varepsilon\| \partial_z\varphi\|_2^2\nonumber\\
  \leq&C(\|(\partial_zu,\eta,\theta)\|_2^2\|\nabla_H(\partial_zu,\eta, \theta)\|_2^2+\|(\partial_zu,\eta,\theta)\|_2^4
  +1)(\|\varphi\|_2^2+1)\nonumber\\
  &+C(\|(\nabla_Hu,\nabla_H\partial_zu,\partial_zT,\sqrt\varepsilon\nabla_H T)\|_2^2+1).\label{1704-1}
\end{align}

Multiplying equation (\ref{psi}) by $\psi$ and integrating the resultant over $\Omega$, it follows from integration by parts that
\begin{align}
  &\frac12\frac{d}{dt}\|\psi\|_2^2+\|\nabla_H\psi\|_2^2+\varepsilon\| \partial_z\psi\|_2^2\nonumber\\
  =&\int_\Omega[-f_0\nabla_H\cdot u\psi+(u\cdot\nabla_Hv-\nabla_H\cdot v u)\cdot\nabla_H^\perp\psi]dxdydz\nonumber\\
  &-\int_\Omega(\nabla_H^\perp:(\nabla_Hu)^T+\nabla_H^\perp w\cdot\partial_z u)\psi dxdydz.\label{1705}
\end{align}
The same arguments as for (\ref{1702})--(\ref{1704}) yield the estimates
\begin{eqnarray*}
&&\int_\Omega (u\cdot\nabla_Hv-\nabla_H\cdot v u)\cdot\nabla_H^\perp\psi dxdydz\\
&\leq& \frac16\|\nabla_H\psi\|_2^2 +C(\|(\partial_zu,\eta,\theta)\|_2^2\|\nabla_H(\partial_zu,\eta,\theta) \|_2^2+\|(\partial_zu,\eta,\theta)\|_2^4 +1),
\end{eqnarray*}
\begin{eqnarray*}
-\int_\Omega\nabla_H^\perp:(\nabla_Hu)^T\psi dxdydz
&\leq&C \left(\|(\eta,\theta)\|_2^2\| \nabla_H(\eta,\theta)\|_2^2+\|(\eta,\theta)\|_2^4+1\right) \|\psi\|_2^2 \\
&&+\frac16\|\nabla_H\psi\|_2^2+C\|\nabla_H\partial_zu\|_2^2,
\end{eqnarray*}
and
\begin{eqnarray*}
&&-\int_\Omega\nabla_H^\perp w\cdot\partial_zu\psi dxdydz\\
&\leq& C\left(\|(\partial_zu,\eta,\theta)\|_2^2\|\nabla_H(\partial_zu, \eta,\theta)\|_2^2+\|(\partial_zu,\eta,\theta)\|_2^4+1\right) (\|\psi \|_2^2+1)\\
&&+\frac16\|\nabla_H\psi\|_2^2+C\|\nabla_H\partial_zu\|_2^2
\end{eqnarray*}
Thanks to the above estimates, we obtain from (\ref{1705}) that
\begin{align*}
  &\frac{d}{dt}\|\psi\|_2^2+\|\nabla_H\psi\|_2^2+\varepsilon\| \partial_z\psi\|_2^2\nonumber\\
  \leq&C(\|(\partial_zu,\eta,\theta)\|_2^2\|\nabla_H(\partial_zu,\eta, \theta)\|_2^2+\|(\partial_zu,\eta,\theta)\|_2^4
  +1)(\|\psi\|_2^2+1)\nonumber\\
  &+C(\|(\nabla_Hu,\nabla_H\partial_zu)\|_2^2+1).
\end{align*}

Summing the above inequality with (\ref{1704-1}) yields
\begin{align*}
  &\frac{d}{dt}\|(\varphi,\psi)\|_2^2 +\|(\nabla_H\varphi,\nabla_H\psi,\sqrt\varepsilon \partial_z\varphi,\sqrt\varepsilon\partial_z\psi)\|_2^2\nonumber\\
  \leq&C(\|(\partial_zu,\eta,\theta)\|_2^2\|\nabla_H(\partial_zu,\eta, \theta)\|_2^2+\|(\partial_zu,\eta,\theta)\|_2^4
  +1)(\|(\varphi,\psi)\|_2^2+1)\nonumber\\
  &+C(\|(\nabla_Hu,\nabla_H\partial_zu,\partial_zT,\sqrt\varepsilon\nabla_H T)\|_2^2+1),
\end{align*}
from which, by the Gronwall inequality, and using Proposition \ref{prop3.1}, Corollary \ref{apriH1}, and Proposition \ref{aprizu}, one obtains the conclusion.
\end{proof}

\subsection{Energy inequalities for $(\nabla_H\eta, \nabla_H\theta)$}
In this subsection, we are concerned with deriving energy inequalities for
$(\nabla_H\eta, \nabla_H\theta)$, where $\eta$ and $\theta$ are given
in (\ref{etatheta}). It should be
noticed that the energy inequalities for $(\nabla_H\eta, \nabla_H\theta)$ do not yield the a priori estimates
of themselves, without appealing to the
energy inequalities for $\nabla T$.

As a preparation, we prove the following:

\begin{proposition}\label{nu4m}
Let $\eta,\theta,$ and $u$ be given in (\ref{etatheta}), and $\varphi$ and
$\psi$ as in (\ref{varphipsi}). The following estimates hold:
  \begin{eqnarray*}
    \left(\int_{-h}^h\|\nabla_Hu\|_{4,M}^2dz\right)^{\frac12}&\leq& C\left(\|(\varphi,\psi)\|_2^{\frac12}\|\nabla_H(\varphi,\psi) \|_2^{\frac12}+\|(\varphi,\psi)\|_2+1\right), \\
    \int_{-h}^h\|\nabla_Hu\|_{4,M}dz&\leq& C\left(\|(\varphi,\psi)\|_2^{\frac12}\|\nabla_H(\varphi,\psi) \|_2^{\frac12}+\|(\varphi,\psi)\|_2+1\right),\\
    \left(\int_{-h}^h\|u\|_{\infty,M}^2dz\right)^{\frac12}&\leq& C\left(\|u\|_4+ \|(\varphi,\psi)\|_2^{\frac12}\|\nabla_H(\varphi,\psi) \|_2^{\frac12}+\|(\varphi,\psi)\|_2+1\right)
  \end{eqnarray*}
  for a positive constant $C$ depending only on $\|T_0\|_\infty$ and $h$, in
  particular it is independent of $\varepsilon\in(0,1)$.
\end{proposition}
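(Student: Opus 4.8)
The plan is to mirror the proof of Proposition \ref{prop3.2-1}, replacing the pair $(\eta,\theta)$ by $(\varphi,\psi)$ and the field $v$ by $u=\partial_zv$. The crucial observation is that $\varphi$ and $\psi$ encode, respectively, the horizontal divergence and the horizontal curl of $u$: by the definitions in (\ref{varphipsi}) one has $\nabla_H^\perp\cdot u=\psi$ and $\nabla_H\cdot u=\varphi-T$. Since Proposition \ref{prop3.2} yields $\|T\|_\infty\leq\|T_0\|_\infty$, the temperature contributes only bounded terms, which can be absorbed into the ``$+1$'' on the right-hand sides; this is also what keeps the constant $C$ depending only on $\|T_0\|_\infty$ and $h$, and in particular independent of $\varepsilon$.

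For the first two estimates I would work on a fixed horizontal slice $z\in(-h,h)$ and apply the two-dimensional elliptic (Calder\'on--Zygmund) estimate for the Hodge-type decomposition of periodic vector fields on $M$, namely
$$
\|\nabla_Hu\|_{4,M}\leq C\big(\|\nabla_H\cdot u\|_{4,M}+\|\nabla_H^\perp\cdot u\|_{4,M}\big)=C\big(\|\varphi-T\|_{4,M}+\|\psi\|_{4,M}\big)\leq C\big(\|\varphi\|_{4,M}+\|\psi\|_{4,M}+\|T_0\|_\infty\big),
$$
where the constant is independent of $z$ by periodicity. Squaring, integrating in $z$ over $(-h,h)$, and invoking Lemma \ref{lem2.3} applied to $\varphi$ and $\psi$ then yields
$$
\left(\int_{-h}^h\|\nabla_Hu\|_{4,M}^2dz\right)^{\frac12}\leq C\left(\|(\varphi,\psi)\|_2^{\frac12}\|\nabla_H(\varphi,\psi)\|_2^{\frac12}+\|(\varphi,\psi)\|_2+1\right),
$$
which is the first estimate. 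The second estimate follows at once by applying the H\"older inequality in $z$, exactly as in the passage from the third to the fourth estimate of Proposition \ref{prop3.2-1}.

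For the third estimate I would exploit the two-dimensional Sobolev embedding $W^{1,4}(M)\hookrightarrow L^\infty(M)$ (valid since $4>2$), which on each slice gives $\|u\|_{\infty,M}\leq C(\|u\|_{4,M}+\|\nabla_Hu\|_{4,M})$. Squaring, integrating in $z$, and taking the square root, the $\nabla_Hu$ contribution is already controlled by the first estimate; for the remaining term the H\"older inequality in $z$ gives $\int_{-h}^h\|u\|_{4,M}^2dz\leq(2h)^{\frac12}\|u\|_4^2$, so that this piece contributes only $C\|u\|_4$. Adding the two contributions produces the claimed bound.

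The argument is essentially a repackaging of the slice-wise elliptic and Sobolev estimates, so I do not anticipate any serious obstacle. The only point requiring a little care is the decomposition $\nabla_H\cdot u=\varphi-T$ together with the uniform-in-$z$ control of $\|T\|_{4,M}$ through the maximum-principle bound $\|T\|_\infty\leq\|T_0\|_\infty$ of Proposition \ref{prop3.2}; this is precisely what prevents the temperature from spoiling the estimates and secures the asserted dependence of $C$ and its independence of $\varepsilon\in(0,1)$.
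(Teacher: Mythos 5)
Your proposal is correct and follows essentially the same route as the paper: the slice-wise two-dimensional elliptic estimate with $\nabla_H\cdot u=\varphi-T$, $\nabla_H^\perp\cdot u=\psi$, the $L^\infty$ bound on $T$ to absorb the temperature into the constant, Lemma \ref{lem2.3} for the first inequality, H\"older in $z$ for the second, and the Sobolev embedding $W^{1,4}(M)\hookrightarrow L^\infty(M)$ for the third. No gaps.
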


\begin{proof}
By Proposition \ref{prop3.2}, it follows from the elliptic estimate that for any $z\in(-h,h)$
\begin{eqnarray*}
\|\nabla_Hu(\cdot,z)\|_{4,M}&\leq&C(\|\nabla_H\cdot u(\cdot,z)\|_{4,M}+
\|\nabla_H^\perp\cdot u(\cdot,z)\|_{4,M}) \\
&\leq& C(\|\varphi(\cdot,z)\|_{4,M}+\|T(\cdot,z)\|_{4,M}+\|\psi(\cdot,z)\|_{4,M})\\
&\leq&C(\|\varphi(\cdot,z)\|_{4,M}+\|\psi(\cdot,z)\|_{4,M}+1)
\end{eqnarray*}
for a positive constant $C$ depending only on $\|T_0\|_\infty$. Thanks to the above, it follows from Lemma \ref{lem2.3} that
\begin{eqnarray*}
  \left(\int_{-h}^h\|\nabla_Hu\|_{4,M}^2dz\right)^{\frac12}
  \leq C\left[\left(\int_{-h}^h\|(\varphi,\psi)\|_{4,M}^2dz\right)^{\frac12}+1\right]\\
  \leq C\left(\|(\varphi,\psi)\|_2^{\frac12}\|\nabla_H(\varphi,\psi)\|_2^{\frac12} +\|(\varphi,\psi)\|_2+1\right),
\end{eqnarray*}
proving the first inequality, while the second one follows from the first one by applying the H\"older inequality. For the third inequality, by the Sobolev embedding inequality and the H\"older inequality, and using the first conclusion, we have
\begin{align*}
   \left(\int_{-h}^h\|u\|_{\infty,M}^2dz\right)^{\frac12}\leq C\left(\int_{-h}^h(\|u\|_{4,M}^2+\|\nabla_Hu\|_{4,M}^2)dz \right)^{\frac12} \\
   \leq C\left(\|u\|_4+ \|(\varphi,\psi)\|_2^{\frac12}\|\nabla_H(\varphi,\psi) \|_2^{\frac12}+\|(\varphi,\psi)\|_2+1\right),
\end{align*}
proving the third inequality.
\end{proof}

We have the following proposition about the energy inequality for $(\nabla_H\eta,\nabla_H\theta)$:

\begin{proposition}\label{propnhetatheta}
We have the following estimate
\begin{align*}
&\frac{d}{dt}\|\nabla_H(\eta,\theta)\|_2^2 +\|(\Delta_H\eta,\Delta_H\theta,\sqrt\varepsilon \nabla_H\partial_z\eta, \sqrt\varepsilon \nabla_H\partial_z\theta)\|_2^2\\
  \leq&C(\|v\|_\infty^2+\|u\|_4^2 +\|(\varphi,\psi)\|_2 \|\nabla_H(\varphi,\psi)\|_2+\|(\varphi,\psi)\|_2^2+1)
\nonumber\\
   &\times\|\nabla_H(\eta,\theta, T)\|_2^2+C(\|(\eta,\theta,\varphi,\psi)\|_2^2+\|u\|_4^4+1)\nonumber\\
   & \times(\|\nabla_H(v,\eta, \theta,\varphi,\psi)\|_2^2 +\|\partial_zT\|_2^2+1) +C\varepsilon^2\|\Delta_HT\|_2^2,
\end{align*}
where $C$ is a positive constant depending only on $h$ and $\|T_0\|_\infty$; in particular, $C$ is independent of $\varepsilon\in(0,1)$.
\end{proposition}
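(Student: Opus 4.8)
The plan is to test the evolution equations (\ref{etaeps}) and (\ref{thetaeps}) with $-\Delta_H\eta$ and $-\Delta_H\theta$ respectively, integrate over $\Omega$, and add. Integration by parts reproduces exactly the left-hand side $\frac12\frac{d}{dt}\|\nabla_H(\eta,\theta)\|_2^2+\|(\Delta_H\eta,\Delta_H\theta)\|_2^2+\varepsilon\|(\nabla_H\partial_z\eta,\nabla_H\partial_z\theta)\|_2^2$, the vertical term coming from $\int_\Omega(-\varepsilon\partial_z^2\eta)(-\Delta_H\eta)=\varepsilon\|\nabla_H\partial_z\eta\|_2^2$. Before estimating the right-hand sides I would first put the nonlinear parts into transport form via the vector identities $\nabla_H^\perp\cdot((v\cdot\nabla_H)v)=v\cdot\nabla_H\theta+\theta\,\nabla_H\cdot v$, $\ \nabla_H\cdot((v\cdot\nabla_H)v)=v\cdot\nabla_H(\nabla_H\cdot v)+\nabla_Hv:(\nabla_Hv)^T$, together with the analogues for $w\partial_zv$ and $f_0\overrightarrow{k}\times v$; using $\partial_z\Phi=T$, this moves the convective part $v\cdot\nabla_H(\eta,\theta)+w\partial_z(\eta,\theta)$ to the left and leaves manageable source terms on the right, the two $wT$ contributions cancelling. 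Two identities will be used throughout and explain why $\varphi,\psi$ enter: $\partial_z\eta=\nabla_H\cdot u+T=\varphi$ and $\partial_z\theta=\nabla_H^\perp\cdot u=\psi$.

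The convective commutators $\int_\Omega(v\cdot\nabla_H\eta+w\partial_z\eta)(-\Delta_H\eta)$ and their $\theta$ analogue are treated by integrating by parts in the horizontal variables: the two ``symmetric'' pieces $\mp\tfrac12\int_\Omega(\nabla_H\cdot v)|\nabla_H\eta|^2$ cancel by virtue of $\partial_zw=-\nabla_H\cdot v$, leaving a vortex-stretching term $\int_\Omega(\nabla_H\eta\otimes\nabla_H\eta):\nabla_Hv$ and a term $\int_\Omega(\nabla_Hw\cdot\nabla_H\eta)\varphi$, where $\partial_z\eta=\varphi$ has been used. The genuinely lower-order sources $f_0\theta$, $(1-\varepsilon)\partial_zT$, $v\cdot\nabla_H\Phi$, $\int_{-h}^z\nabla_H\cdot(vT)\,d\xi$ and $\varepsilon\int_{-h}^z\Delta_HT\,d\xi$ I would pair directly against the dissipation, using $\|(\cdot)\|_2\|\Delta_H\eta\|_2\le\frac1{16}\|\Delta_H\eta\|_2^2+C\|(\cdot)\|_2^2$, and then bound each $L^2$ norm by the quantities in the statement: $\|v\cdot\nabla_H\Phi\|_2\le\|v\|_\infty\|\nabla_H\Phi\|_2\le C\|v\|_\infty\|\nabla_HT\|_2$, $\ \|\int_{-h}^z\nabla_H\cdot(vT)d\xi\|_2\le C(\|T_0\|_\infty\|\nabla_Hv\|_2+\|v\|_\infty\|\nabla_HT\|_2)$, and $\varepsilon\|\int_{-h}^z\Delta_HT\,d\xi\|_2\le C\varepsilon\|\Delta_HT\|_2$, which produces the $\|v\|_\infty^2$ coefficient and the $C\varepsilon^2\|\Delta_HT\|_2^2$ term; here $\|T\|_\infty\le\|T_0\|_\infty$ from Proposition \ref{prop3.2} is used. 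The $z$-independent forcing $f$ drops out entirely, since $\int_\Omega f(-\Delta_H\eta)=-\int_M f\,\Delta_H\!\big(\int_{-h}^h\eta\,dz\big)\,dxdy=0$ because $\int_{-h}^h\eta\,dz=0$.

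The main obstacle is the family of critical quadratic terms that only barely fit under the single available horizontal dissipation: the vortex-stretching terms $\int_\Omega|\nabla_Hv||\nabla_H(\eta,\theta)|^2$, the products $\int_\Omega|\nabla_Hv|^2|\Delta_H(\eta,\theta)|$ coming from $\nabla_Hv:(\nabla_Hv)^T$, and the vertical-velocity terms $\int_\Omega|\nabla_Hw||u||\Delta_H(\eta,\theta)|$ and $\int_\Omega|\nabla_Hw||\nabla_H(\eta,\theta)||(\varphi,\psi)|$. For all of these I would avoid the isotropic $L^4(\Omega)$ estimate (unavailable, since $\nabla_H^2v\notin L^2$ and $\nabla_H^2T\notin L^2$) and instead invoke the anisotropic Ladyzhenskaya-type inequalities of Lemmas \ref{ladlemma} and \ref{lem2.3}, after replacing $|\nabla_Hv|$ and $|u|$ by their vertical integrals $\tfrac1{2h}\int_{-h}^h|\nabla_Hv|dz+\int_{-h}^h|\nabla_Hu|dz$ and $\int_{-h}^h|\partial_zu|dz$, and $|\nabla_Hw|$ by $\int_{-h}^h|\nabla_H\eta|+|\nabla_H\Phi|\,dz$. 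The horizontal norms of $\nabla_Hv$ are then controlled by $(\eta,\theta)$ through Proposition \ref{prop3.2-1}, those of $\nabla_Hu$ by $(\varphi,\psi)$ through Proposition \ref{nu4m}, and $\partial_z(\eta,\theta)=(\varphi,\psi)$; a Young inequality absorbs the top-order factors $\|\Delta_H(\eta,\theta)\|_2$ and $\|\nabla_H(\varphi,\psi)\|_2$ into the left-hand side and leaves precisely the combination $\|v\|_\infty^2+\|u\|_4^2+\|(\varphi,\psi)\|_2\|\nabla_H(\varphi,\psi)\|_2+\|(\varphi,\psi)\|_2^2+1$ multiplying $\|\nabla_H(\eta,\theta,T)\|_2^2$, plus the lower-order products recorded in the statement. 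The delicate point is the bookkeeping: distributing the one dissipation $\|\Delta_H(\eta,\theta)\|_2^2$ among all these critical terms with sufficiently small constants while checking that no term ever demands $\nabla_H^2v$ or $\nabla_H^2T$. Adding the resulting $\eta$- and $\theta$-inequalities yields the claim; no Gronwall argument is needed here, since this proposition records only the differential inequality.
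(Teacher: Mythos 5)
Your proposal is correct and would yield the stated inequality; the overall strategy (testing (\ref{etaeps}), (\ref{thetaeps}) with $-\Delta_H\eta$, $-\Delta_H\theta$, killing the $f$-term via $\int_{-h}^h\eta\,dz=0$, and converting every occurrence of $\nabla_Hv$, $\nabla_Hu$, $\partial_z(\eta,\theta)$ into $(\eta,\theta)$, $(\varphi,\psi)$ through Propositions \ref{prop3.2-1} and \ref{nu4m} and the anisotropic Lemmas \ref{ladlemma}--\ref{lem2.3}) is exactly the paper's. The one substantive divergence is your treatment of the convective part. The paper does \emph{not} pass to transport form: it leaves the right-hand sides as $\nabla_H^{(\perp)}\cdot[(v\cdot\nabla_H)v+w\partial_zv+\cdots]$, pairs them with $\Delta_H(\eta,\theta)$, and disposes of the top-order piece by the direct bound $\int_\Omega|v||\nabla_H^2v||\Delta_H\theta|\le\|v\|_\infty\|\Delta_Hv\|_2\|\Delta_H\theta\|_2$ together with the identity $\Delta_Hv=\nabla_H(\nabla_H\cdot v)-\nabla_H^\perp(\nabla_H^\perp\cdot v)$, so that $\|\Delta_Hv\|_2\le C\|\nabla_H(\eta,\theta,T)\|_2$; this single step is precisely the origin of the $\|v\|_\infty^2\|\nabla_H(\eta,\theta,T)\|_2^2$ term in the statement. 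Your route instead cancels $v\cdot\nabla_H(\eta,\theta)+w\partial_z(\eta,\theta)$ by antisymmetry (using $\partial_zw=-\nabla_H\cdot v$ and $\partial_z\eta=\varphi$, $\partial_z\theta=\psi$), which avoids $\nabla_H^2v$ there altogether but trades it for the vortex-stretching integrals $\int_\Omega|\nabla_Hv||\nabla_H(\eta,\theta)|^2$ and $\int_\Omega|\nabla_Hw||\nabla_H(\eta,\theta)||(\varphi,\psi)|$; these are absorbable by the same anisotropic machinery (the first lands in the $(\|(\eta,\theta,\varphi,\psi)\|_2^2+1)\|\nabla_H(\eta,\theta)\|_2^2$ slot, the second — after a Young inequality with exponents $(4,4/3)$ — in the $\|(\varphi,\psi)\|_2\|\nabla_H(\varphi,\psi)\|_2\,\|\nabla_H(\eta,\theta,T)\|_2^2$ slot), so the bound still closes; the bookkeeping for the latter term is slightly heavier than anything in the paper's version. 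The remaining terms ($|\nabla_Hv|^2$, $|\nabla_Hw||u|$, the $\varepsilon$-terms, $v\cdot\nabla_H\Phi$, $(1-\varepsilon)\partial_zT$) are handled identically in both arguments. In short: same skeleton, a genuinely different (vorticity-style) treatment of the leading convective term, and either choice fits under the stated right-hand side.
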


\begin{proof}
Multiplying equation (\ref{theta}) by $-\Delta_H\theta$ and integrating the resultant over $\Omega$, it follows from integration by parts that
\begin{align}
  &\frac12\frac{d}{dt}\|\nabla_H\theta\|_2^2 +\|\Delta_H\theta\|_2 +\varepsilon\|\nabla_H\partial_z\theta\|_2^2\nonumber\\
  =&\int_\Omega \nabla_H^\perp\cdot(v\cdot\nabla_Hv+w\partial_zv+f_0k\times v)\Delta_H\theta dxdydz\nonumber\\
  \leq&\frac{1}{16}\|\Delta_H\theta\|_2^2+f_0\|\nabla_Hv\|_2^2+
  \int_\Omega(|v||\nabla_H^2v|+|\nabla_Hv|^2\nonumber\\
  &+|w||\nabla_Hu|+|\nabla_H w||u|)|\Delta_H\theta|dxdydz.\label{ntheta0}
\end{align}

We estimate the terms in (\ref{ntheta0}) as follows. First, for  $\int_\Omega|v||\nabla_H^2v||\Delta_H\theta|dxdydz$, by using (\ref{etatheta}) and the Young inequality, we have
\begin{align}
\int_\Omega&|v||\nabla_H^2v||\Delta_H\theta|dxdydz
  \leq \|v\|_\infty\|\Delta_H v\|_2\|\Delta_H\theta\|_2\nonumber\\
  &=\|v\|_\infty\|\nabla_H (\nabla_H\cdot v)-\nabla_H^\perp(\nabla_H^\perp\cdot v)\|_2\|\Delta_H\theta\|_2 \nonumber\\
  &=\|v\|_\infty\|\nabla_H(\eta+\Phi)-\nabla_H^\perp\theta\|_2\|\Delta_H \theta\|_2 \nonumber\\
  &\leq C\|v\|_\infty\|\nabla_H(\eta,\theta,T)\|_2\|\Delta_H \theta\|_2\nonumber\\
  &\leq\frac{1}{16}\|\Delta_H\theta\|_2^2+C\|v\|_\infty^2\|\nabla_H(\eta,\theta, T)\|_2^2.\label{ntheta1}
\end{align}
Then, for the term $\int_\Omega|\nabla_Hv|^2|\Delta_H\theta|dxdydz$, recalling that
$$
|\nabla_Hv(x,y,z,t)|\leq\frac{1}{2h}\int_{-h}^h|\nabla_Hv|dz+\int_{-h}^h |\nabla_Hu|dz,
$$
we deduce by Lemma \ref{ladlemma}, Propositions \ref{prop3.2-1} and \ref{nu4m}, and the Young inequality that
\begin{align}
  &\int_\Omega(|\nabla_Hv|^2|\Delta_H\theta|+|w||\nabla_Hu||\Delta_H\theta|) dxdydz\nonumber\\
  \leq& C\int_{M}\left(\int_{-h}^h(|\nabla_Hv|+|\nabla_Hu|)dz\right)
  \left(\int_{-h}^h |\nabla_Hv||\Delta_H\theta|dz\right)dxdy \nonumber\\
    &+\int_M
    \left(\int_{-h}^h|\nabla_Hv|dz\right)\left(\int_{-h}^h|\nabla_Hu||\Delta_H \theta|dz\right)dxdy\nonumber\\
  \leq& C\left(\int_{-h}^h\|\nabla_H(v,u)\|_{4,M}dz\right)\left(\int_{-h}^h \|\nabla_Hv\|_{4,M}^2dz\right)^{\frac12}\|\Delta_H\theta\|_2 \nonumber\\
  &+\left(\int_{-h}^h\|\nabla_Hv\|_{4,M}dz\right)\left(\int_{-h}^h \|\nabla_Hu\|_{4,M}^2dz\right)^{\frac12}\|\Delta_H\theta\|_2 \nonumber\\
  \leq& C\left(\|(\eta,\theta,\varphi,\psi)\|_2 \|\nabla_H(\eta, \theta,\varphi,\psi)\|_2 +\|(\eta,\theta,\varphi,\psi)\|_2^2 +1\right) \|\Delta_H\theta\|_2 \nonumber\\
  \leq&\frac{1}{16}\|\Delta_H\theta\|_2^2+C(\|(\eta,\theta,\varphi,\psi)\|_2^2+1) (\|\nabla_H(\eta, \theta,\varphi,\psi)\|_2^2+1).\label{ntheta2}
\end{align}
Finally, for the term $\int_\Omega|\nabla_Hw||u||\Delta_H\theta|dxdydz$, thanks to (\ref{w}), (\ref{etatheta})--(\ref{Phi}), we have
\begin{align}
  &\int_\Omega|\nabla_Hw||u||\Delta_H\theta|dxdydz\nonumber\\
  \leq& C\int_M\left(\int_{-h}^h (|\nabla_H\eta|+|\nabla_HT|)dz\right)\left(\int_{-h}^h |u||\Delta_H\theta|dz\right)dxdy. \label{ntheta3'}
\end{align}
For the term $C\int_M\left(\int_{-h}^h (|\nabla_H\eta|+|\nabla_HT|)dz\right)\left(\int_{-h}^h |u||\Delta_H\theta|dz\right)dxdy$, by Lemmas \ref{ladlemma} and \ref{lem2.3} and using the H\"older and Young inequalities, we have
\begin{align}
C\int_M&\left(\int_{-h}^h (|\nabla_H\eta|+|\nabla_HT|)dz\right)\left(\int_{-h}^h |u||\Delta_H\theta|dz\right)dxdy\nonumber\\
  \leq& C\left(\int_{-h}^h\|\nabla_H\eta\|_{4,M}dz\right) \left(\int_{-h}^h\|u\|_{4,M}^2 dz\right)^{\frac12}\|\Delta_H\theta\|_2\nonumber\\
  \leq& C\|\nabla_H\eta\|_2^{\frac12}\|\nabla_H^2\eta\|_2^{\frac12}\|u\|_4 \|\Delta_H\theta\|_2
  \leq \frac{1}{32}\|\Delta_H(\eta,\theta)\|_2^2 +C\|\nabla_H\eta\|_2^2\|u\|_4^4.\label{ntheta3'-1}
\end{align}
For the term $\int_M\left(\int_{-h}^h|\nabla_HT|dz\right)\left(\int_{-h}^h|u||\Delta_H\theta| dz\right)dxdy$, we estimate it as follows. Using the H\"older and Young inequalities and
applying Proposition \ref{nu4m}, we deduce
\begin{align}
  &C\int_M\left(\int_{-h}^h|\nabla_HT|dz\right)\left(\int_{-h}^h|u||\Delta_H\theta| dz\right)dxdy \nonumber\\
  \leq& C\int_M\left(\int_{-h}^h |\nabla_HT|dz\right)\left(\int_{-h}^h|u|^2dz\right)^{\frac12} \left(\int_{-h}^h|\Delta_H\theta|^2dz\right)^{\frac12} dxdy\nonumber\\
  \leq& C\left[\int_M\left(\int_{-h}^h|\nabla_HT|dz
  \right)^2dxdy\right]^{\frac12} \left(\int_{-h}^h\|u\|_{\infty,M}^2dz\right)^{\frac12}\|\Delta_H\theta\|_2 \nonumber\\
  \leq& C\left(\int_{-h}^h\|u\|_{\infty,M}^2dz\right)^{\frac12}\|\Delta_H \theta\|_2\|\nabla_HT\|_2 \nonumber\\
  \leq& C\left(\|u\|_4+ \|(\varphi,\psi)\|_2^{\frac12}\|\nabla_H(\varphi,\psi) \|_2^{\frac12}+\|(\varphi,\psi)\|_2+1\right) \|\Delta_H \theta\|_2\|\nabla_HT\|_2\nonumber\\
  \leq&\frac{1}{32}\|\Delta_H\theta\|_2^2+C[\|u\|_4^2 +\|(\varphi,\psi)\|_2 (\|\nabla_H(\varphi,\psi)\|_2+1)+1]\|\nabla_HT\|_2^2.
  \label{ntheta3'-2}
\end{align}
Thanks to (\ref{ntheta3'-1}) and (\ref{ntheta3'-2}), we obtain from (\ref{ntheta3'}) that
\begin{align}
  &\int_\Omega|\nabla_Hw||u||\Delta_H\theta|dxdydz\nonumber\\
  \leq&\frac{1}{16}\|\Delta_H(\eta,\theta)\|_2^2 +C(\|u\|_4^2 +\|(\varphi,\psi)\|_2 \|\nabla_H(\varphi,\psi)\|_2\nonumber\\
  &+\|(\varphi,\psi)\|_2^2+1)\|\nabla_HT\|_2^2 +C\|\nabla_H\eta\|_2^2\|u\|_4^4.\label{ntheta3}
\end{align}

Combining (\ref{ntheta1}), (\ref{ntheta2}) and (\ref{ntheta3}), we obtain
\begin{align}
  &\int_\Omega(|v||\nabla_H^2v|+|\nabla_Hv|^2+|w||\nabla_Hu|+|\nabla_H w||u|)|\Delta_H\theta|dxdydz\nonumber\\
  \leq&C(\|v\|_\infty^2+\|u\|_4^2 +\|(\varphi,\psi)\|_2 \|\nabla_H(\varphi,\psi)\|_2
   +\|(\varphi,\psi)\|_2^2+1)\|\nabla_H(\eta,\theta, T)\|_2^2\nonumber\\
   &+C(\|(\eta,\theta,\varphi,\psi)\|_2^2+\|u\|_4^4+1) (\|\nabla_H(\eta, \theta,\varphi,\psi)\|_2^2+1)+\frac{3}{16}\|\Delta_H(\eta,\theta)\|_2^2.
   \label{nthetaeta}
\end{align}
Therefore, recalling that $\|\nabla_Hv\|_2^2\leq C(\|\eta\|_2^2+\|\theta\|_2^2+1)$, guaranteed by Proposition \ref{prop3.2-1}, it follows from (\ref{ntheta0}) that
\begin{align}
  &\frac12\frac{d}{dt}\|\nabla_H\theta\|_2^2 +\|\Delta_H\theta\|_2 +\varepsilon\|\nabla_H\partial_z\theta\|_2^2\nonumber\\
  \leq&\frac{1}{4}\|\Delta_H(\eta,\theta)\|_2^2 +C(\|v\|_\infty^2+\|u\|_4^2 +\|(\varphi,\psi)\|_2 \|\nabla_H(\varphi,\psi)\|_2
\nonumber\\
   &+\|(\varphi,\psi)\|_2^2+1)\|\nabla_H(\eta,\theta, T)\|_2^2+C(\|(\eta,\theta,\varphi,\psi)\|_2^2\nonumber\\
   &+\|u\|_4^4+1) (\|\nabla_H(v,\eta, \theta,\varphi,\psi)\|_2^2+1). \label{nhtheta}
\end{align}

Recall that $\int_{-h}^h\eta(x,y,z,t)dy=0,$ which implies
$$
\int_\Omega f(x,y,t)\Delta_H\eta(x,y,z,t)dxdydz=0,
$$
where $f(x,y,t)$ is given by (\ref{f}). Multiplying equation (\ref{etaeps}) by $-\Delta_H\eta$ and integrating the resultant over $\Omega$, it follows from integration by parts, Proposition \ref{prop3.2}, and the H\"older inequality that
\begin{align}
  &\frac{1}{2}\frac{d}{dt}\|\nabla_H\eta\|_2^2+\|\Delta_H\eta\|_2^2 +\varepsilon\|\nabla_H\partial_z\eta\|_2^2\nonumber\\
  =&\int_\Omega\bigg\{\nabla_H\cdot\big[(v\cdot\nabla_H)v +w\partial_zv+f_0\overrightarrow{k}\times v\big]-(1-\varepsilon)\partial_zT\nonumber\\
  &+wT+\left(\int_{-h}^z(\nabla_H\cdot(vT)-\varepsilon\Delta_HT)d\xi\right) +f(x,y,t)\bigg\}\Delta_H\eta dxdydz\nonumber\\
  =&\int_\Omega\bigg\{ \nabla_H\cdot\big[(v\cdot\nabla_H)v+w\partial_zv\big]+f_0\nabla_H\cdot( \overrightarrow{k}\times v)-(1-\varepsilon)\partial_zT+wT \nonumber\\
  &+\left(\int_{-h}^z\big((\nabla_H\cdot v)T+v\cdot\nabla_HT-\varepsilon\Delta_HT\big)d\xi\right)\bigg\}
  \Delta_H\eta dxdydz\nonumber\\
  \leq&C(\|\nabla_Hv\|_2+\|\partial_zT\|_2+\|v\|_\infty\|\nabla_HT\|_2 +\varepsilon\|\Delta_HT\|_2)\|\Delta_H\eta\|_2 \nonumber\\
  &+\int_\Omega \nabla_H\cdot\big[(v\cdot\nabla_H)v+w\partial_zv\big] \Delta_H\eta dxdydz.\label{neta0}
\end{align}
Same arguments as for (\ref{nthetaeta}) yield
\begin{align*}
  &\int_\Omega \nabla_H\cdot\big[(v\cdot\nabla_H)v+w\partial_zv \big] \Delta_H\eta dxdydz\\
  \leq&\int_\Omega(|v||\nabla_H^2v|+|\nabla_Hv|^2+|w||\nabla_Hu|+|\nabla_H w||u|)|\Delta_H\eta|dxdydz \nonumber\\
  \leq&C(\|v\|_\infty^2+\|u\|_4^2 +\|(\varphi,\psi)\|_2 \|\nabla_H(\varphi,\psi)\|_2
   +\|(\varphi,\psi)\|_2^2\nonumber\\
   &+1)\|\nabla_H(\eta,\theta, T)\|_2^2+C(\|(\eta,\theta,\varphi,\psi)\|_2^2+\|u\|_4^4+1) \\
   &\times(\|\nabla_H(\eta, \theta,\varphi,\psi)\|_2^2+1)+\frac{3}{16}\|\Delta_H(\eta,\theta)\|_2^2.
\end{align*}
Thanks to this estimate, it follows from (\ref{neta0}) and the Young inequality that
\begin{align*}
&\frac{1}{2}\frac{d}{dt}\|\nabla_H\eta\|_2^2+\|\Delta_H\eta\|_2^2+ \varepsilon\|\nabla_H\partial_z\eta\|_2^2\nonumber\\
  \leq&\frac{1}{4}\|\Delta_H(\eta,\theta)\|_2^2 +C(\|v\|_\infty^2+\|u\|_4^2 +\|(\varphi,\psi)\|_2 \|\nabla_H(\varphi,\psi)\|_2
\nonumber\\
   &+\|(\varphi,\psi)\|_2^2+1)\|\nabla_H(\eta,\theta, T)\|_2^2+C(\|(\eta,\theta,\varphi,\psi)\|_2^2+\|u\|_4^4\nonumber\\
   &+1) (\|\nabla_H(v,\eta, \theta,\varphi,\psi)\|_2^2 +\|\partial_zT\|_2^2+1)+C\varepsilon^2\|\Delta_HT\|_2^2,
\end{align*}
which, summed with (\ref{nhtheta}), yields the conclusion.
\end{proof}

Note that $\nabla_HT$ is involved in the energy inequality of Proposition \ref{propnhetatheta} and, thus, it does not yield the
a priori estimate for $(\nabla_H\eta,\nabla_H\theta)$. Therefore, we
need to combine the energy inequalities for $(\nabla_H\eta,\nabla_H\theta)$, which have already been stated in Proposition \ref{propnhetatheta}, with those for $\nabla_HT$, which will
be stated in the next subsection.

\subsection{Energy inequality for $\nabla T$} In this subsection, we are concerned with performing the energy inequalities for the first order derivatives of $T$.

Define the function $\varpi(x,y,z,t)$ as follows: for any $z\in(-h,h)$ and $t\in(0,\infty)$, $\varpi(\cdot,z,t)$ is the unique solution to the two-dimensional elliptic system subject to horizontal boundary conditions
\begin{equation}
\left\{
\begin{array}{l}
\nabla_H\cdot\varpi(x,y,z,t)=\Phi(x,y,z,t)-\frac{1}{|M|}\int_M\Phi(x,y,z,t) dxdy,\quad\mbox{in }\Omega,\\
\nabla_H^\perp\cdot\varpi(x,y,z,t)=0,\quad\mbox{in }\Omega,\qquad\int_M\varpi(x,y,z,t) dxdy=0,
\end{array}
\right.\label{beta}
\end{equation}
where $\Phi$ is the function given by (\ref{Phi}).
Define a function $\zeta$ as
\begin{equation}\label{zeta}
  \zeta(x,y,z,t)=v(x,y,z,t)+\varpi(x,y,z,t),
\end{equation}
then, recalling the definitions of $\eta$ and $\theta$, one can easily check that
\begin{equation}\label{zeeithta}
 \nabla_H\cdot\zeta= \eta-\frac{1}{|M|}\int_M\Phi dxdy,\quad\nabla_H^\perp\cdot\zeta=\theta.
\end{equation}

The following proposition will be used later.

\begin{proposition}\label{propzetavarpi}
Let $\eta$ and $\theta$ as in (\ref{etatheta}), $\varpi$ as in (\ref{beta}), and $\zeta$ as in (\ref{zeta}). Then, the following inequalities hold:
  \begin{eqnarray*}
 \int_{-h}^h\| \nabla_H\zeta(\cdot,z,t)\|_{\infty,M}dz&\leq & C(\|\nabla_H\eta\|_2(t)+\|\nabla_H\theta\|_2(t)+1)\\
   &&\times
   \log^{\frac{1}{2}}(e+\|\Delta_H\eta\|_{2}(t)+\|\Delta_H\theta\|_{2}(t)),\\
 \sup_{-h\leq z\leq h}\|\nabla_H\varpi(\cdot,z,t)\|_{\infty,M} &\leq& C\log(e+\|\nabla_HT\|_{q}(t)),\quad q\in(2,\infty),
\end{eqnarray*}
for a positive constant $C$ depending only on $h, q,$ and $\|T_0\|_\infty$.
\end{proposition}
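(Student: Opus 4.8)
The plan is to prove the two estimates separately, the first through a two–dimensional Brézis–Gallouet–Wainger inequality and the second through a Beale–Kato–Majda type logarithmic bound for a singular integral operator, in each case working slice by slice in the vertical variable $z$ and then integrating.

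For the first estimate, I fix $z\in(-h,h)$ and regard $\zeta(\cdot,z,t)$ as a periodic vector field on the two-dimensional torus $M$. Since by (\ref{zeeithta}) its horizontal divergence is $\eta-\frac{1}{|M|}\int_M\Phi\,dxdy$ and its horizontal curl is $\theta$, the Fourier (Hodge) identity $\|\nabla_H\zeta\|_{2,M}^2=\|\nabla_H\cdot\zeta\|_{2,M}^2+\|\nabla_H^\perp\cdot\zeta\|_{2,M}^2$, together with its higher-order analogue, gives for each $z$
\[
\|\nabla_H\zeta(\cdot,z)\|_{H^1(M)}\le C(\|\nabla_H\eta(\cdot,z)\|_{2,M}+\|\nabla_H\theta(\cdot,z)\|_{2,M}),
\]
\[
\|\nabla_H\zeta(\cdot,z)\|_{H^2(M)}\le C(\|\nabla_H(\eta,\theta)(\cdot,z)\|_{2,M}+\|\Delta_H(\eta,\theta)(\cdot,z)\|_{2,M}).
\]
Here I use that $\theta$ and $\eta-\bar\eta$ have zero mean on $M$, so the Poincar\'e inequality upgrades the $L^2$ norms of $\eta-\bar\eta,\theta$ to $\|\nabla_H(\eta,\theta)\|_{2,M}$, and that $\|\nabla_H^2 f\|_{2,M}\le C\|\Delta_H f\|_{2,M}$ for periodic $f$. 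Applying the two-dimensional Br\'ezis–Gallouet–Wainger inequality $\|g\|_{\infty,M}\le C(1+\|g\|_{H^1(M)})\log^{1/2}(e+\|g\|_{H^2(M)})$ to $g=\nabla_H\zeta(\cdot,z)$, integrating in $z$, and using the Cauchy–Schwarz inequality splits the right-hand side into a factor $(\int_{-h}^h(1+\|\nabla_H\zeta\|_{H^1(M)})^2dz)^{1/2}\le C(1+\|\nabla_H\eta\|_2+\|\nabla_H\theta\|_2)$ and a factor $(\int_{-h}^h\log(e+\|\nabla_H\zeta\|_{H^2(M)})dz)^{1/2}$. For the latter, the concavity of the logarithm (Jensen) together with the Cauchy–Schwarz inequality bound it by $C\log^{1/2}(e+C\int_{-h}^h\|\nabla_H\zeta\|_{H^2(M)}dz)\le C\log^{1/2}(e+\|\Delta_H\eta\|_2+\|\Delta_H\theta\|_2)$, where the last step uses the elementary interpolation $\|\nabla_H(\eta,\theta)\|_2\le C(1+\|\Delta_H(\eta,\theta)\|_2)$ (itself a consequence of $\|\nabla_H f\|_2\le\|f\|_2^{1/2}\|\Delta_H f\|_2^{1/2}$ and the Poincar\'e bound $\|\eta\|_2\le C(\|\nabla_H\eta\|_2+1)$, valid since $|\bar\eta|\le C\|T_0\|_\infty$). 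Combining the two factors yields the first inequality.

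For the second estimate, note that the system (\ref{beta}) says exactly that, for each fixed $z$, $\varpi(\cdot,z,t)$ is curl-free with prescribed divergence $\rho:=\Phi-\frac{1}{|M|}\int_M\Phi\,dxdy$; hence $\varpi=\nabla_H\Delta_H^{-1}\rho$ and $\nabla_H\varpi=\nabla_H^2\Delta_H^{-1}\rho$ is a singular (Calder\'on–Zygmund) integral operator applied to $\rho$. From Proposition \ref{prop3.2}(ii) and the definition (\ref{Phi}) of $\Phi$ one has the two $z$-uniform bounds $\|\rho(\cdot,z)\|_{\infty,M}\le C\|T\|_\infty\le C\|T_0\|_\infty$ and $\|\nabla_H\rho(\cdot,z)\|_{q,M}=\|\nabla_H\Phi(\cdot,z)\|_{q,M}\le C\|\nabla_H T\|_q$, the latter by the Minkowski integral inequality applied to $\nabla_H\Phi(x,y,z)=\int_{-h}^z\nabla_H T\,d\xi-\frac{1}{2h}\int_{-h}^h\int_{-h}^z\nabla_H T\,d\xi\,dz$. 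The plan is then to invoke the Beale–Kato–Majda type logarithmic bound $\|\nabla_H^2\Delta_H^{-1}\rho\|_{\infty,M}\le C(1+\|\rho\|_{\infty,M}\log(e+\|\rho\|_{W^{1,q}(M)}))$, valid in two dimensions for $q\in(2,\infty)$. Since $\|\rho\|_{W^{1,q}(M)}\le C(\|T_0\|_\infty+\|\nabla_HT\|_q)$ and $\|\rho\|_{\infty,M}\le C\|T_0\|_\infty$, this gives $\|\nabla_H\varpi(\cdot,z)\|_{\infty,M}\le C\log(e+\|\nabla_HT\|_q)$ with a constant independent of $z$, and taking the supremum over $z$ proves the second inequality.

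The main obstacle is the logarithmic endpoint estimate invoked in the second step: the operator $\nabla_H^2\Delta_H^{-1}$ is not bounded on $L^\infty$, so the bound must be obtained by interpolating its Calder\'on–Zygmund $L^p$ bounds (with the correct growth in $p$) against the Schauder estimate $\|\nabla_H^2\Delta_H^{-1}\rho\|_{C^\alpha}\le C\|\rho\|_{C^\alpha}$, using the two-dimensional embedding $W^{1,q}\hookrightarrow C^{1-2/q}$ for $q>2$ to control the H\"older norm of $\rho$. By contrast, the slice-by-slice elliptic identities and the Jensen/Cauchy–Schwarz manipulations in the first step, and the reduction of $\nabla_H\Phi$ to $\nabla_H T$ via Minkowski, are routine.
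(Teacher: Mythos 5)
Your proposal is correct and follows essentially the same route as the paper: the first bound is obtained slice by slice from the two-dimensional Br\'ezis--Gallouet--Wainger inequality together with the elliptic/Poincar\'e identities relating $\nabla_H\zeta$ to $(\eta,\theta)$, followed by Cauchy--Schwarz in $z$ and Jensen on the logarithm, and the second bound is the Beale--Kato--Majda type logarithmic estimate applied to the curl-free field $\varpi$ with bounded divergence $\Phi-\bar\Phi$ and $\|\nabla_H\Phi\|_{q,M}\leq C\|\nabla_HT\|_q$. The only difference is presentational: the paper simply cites the BKM-type inequality from \cite{BKM}, whereas you recast it as a logarithmic $L^\infty$ bound for the singular integral $\nabla_H^2\Delta_H^{-1}$ and sketch its (standard) proof by interpolating Calder\'on--Zygmund bounds against Schauder estimates.
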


\begin{proof}
 Recall the two-dimensional version of  the Br\'ezis-Gallouet-Wainger inequality (see, e.g., \cite{BG,BW})
$$
\|g\|_{\infty, M}\leq C(1+\|g\|_{H^1(M)})\log^{\frac{1}{2}}(e+\|g\|_{H^2(M)})
$$
for any $g\in H^2(M)$.
By the aid of this, recalling (\ref{zeeithta}), it follows from the two-dimensional elliptic estimates, the Poincar\'e, H\"older and Jensen inequalities that
\begin{align*}
  &\int_{-h}^h\|\nabla_H\zeta\|_{\infty,M}dz\leq C\int_{-h}^h(\|\nabla_H\zeta\|_{H^1(M)}+1)
  \log^{\frac{1}{2}}(e+\|\nabla_H\zeta\|_{H^2(M)})dz\nonumber\\
  \leq&C\int_{-h}^h(\|(\nabla_H^\perp\cdot\zeta,\nabla_H\cdot\zeta) \|_{H^1(M)}+1)  \log^{\frac{1}{2}}(e+\|(\nabla_H^\perp\cdot\zeta,\nabla_H\cdot\zeta)\|_{H^2(M)})dz \nonumber\\
  \leq&C\int_{-h}^h(\|(\nabla_H\nabla_H^\perp\cdot\zeta, \nabla_H\nabla_H\cdot\zeta)\|_{2,M}+1)
  \log^{\frac{1}{2}}(e+\|\Delta_H(\nabla_H^\perp\cdot\zeta ,\nabla_H\cdot\zeta)\|_{2,M})dz \nonumber\\
  \leq&C\int_{-h}^h(\|\nabla_H\eta\|_{2,M}+\|\nabla_H\theta\|_{2,M}+1) \log^{\frac{1}{2}}(e+\|\Delta_H\eta\|_{2,M}+\|\Delta_H \theta\|_{2,M})dz\nonumber\\
  \leq&C\left(\int_{-h}^h\|\nabla_H(\eta,\theta) \|_{2,M}^2)dz+1\right)^{\frac{1}{2}}\left(\int_{-h}^h\log (e+\|\Delta_H(\eta,\theta)\|_{2,M})\frac{dz}{2h} \right)^{\frac{1}{2}}\nonumber\\
   \leq&C(\|\nabla_H\eta\|_2+\|\nabla_H\theta\|_2+1)
   \left[\log \left(\int_{-h}^h(e+\|\Delta_H\eta\|_{2,M}+\|\Delta_H\theta\|_{2,M})\frac{dz}{2h} \right)\right]^{\frac{1}{2}}\nonumber\\
   \leq&C(\|\nabla_H\eta\|_2+\|\nabla_H\theta\|_2+1)
   \log^{\frac{1}{2}}(e+\|\Delta_H\eta\|_{2}+\|\Delta_H\theta\|_{2}),
\end{align*}
proving the first conclusion.

Recall the following logarithmic Sobolev type inequality (see, e.g., \cite{BKM}), for any function $g=(g^1,g^2)\in W^{1,q}(M)$, $q\in(2,\infty)$,
$$
\|\nabla_Hg\|_{\infty, M}\leq C(\|\nabla_H^\perp\cdot g\|_{\infty,M}+\|\nabla_H\cdot g\|_{\infty,M}+1)\log(e+\|g\|_{W^{1,q}(M)}).
$$
By the aid of this, recalling (\ref{beta}) and $\|T\|_\infty\leq\|T_0\|_\infty$, and applying the elliptic estimates, one has
\begin{align*}
  &\sup_{-h\leq z\leq h}\|\nabla_H\varpi(\cdot,z,t)\|_{\infty,M}\\
  \leq&C\sup_{-h\leq z\leq h}(\|\nabla_H^\perp\cdot\varpi(\cdot,z,t)\|_{\infty,M} +\|\nabla_H\cdot\varpi(\cdot,z,t)\|_{\infty,M}+1)\\
  &\times\log
  (e+\|\nabla_H\varpi(\cdot,z,t)\|_{W^{1,q}(M)})\\
  \leq&C\sup_{-h\leq z\leq h}\log(e+\|\nabla_H^\perp\cdot\varpi(\cdot,z,t)\|_{W^{1,q}(M)}
  +\|\nabla_H \cdot\varpi(\cdot,z,t)\|_{W^{1,q}(M)})\\
  =&C\sup_{-h\leq z\leq h}\log(e+\|\nabla_H \cdot\varpi(\cdot,z,t)\|_{W^{1,q}(M)})\\
  \leq&C\sup_{-h\leq z\leq h}\log(e+\|\nabla_H\nabla_H \cdot\varpi(\cdot,z,t)\|_{q,M}) \\
  =&C\sup_{-h\leq z\leq h}\log(e+\|\nabla_H\Phi(\cdot,z,t)\|_{q,M}).
\end{align*}
Recalling the definition of $\Phi$, (\ref{Phi}), one can easily check that
$$
\|\nabla_H\Phi(\cdot,z,t)\|_{q,M}\leq C\|\nabla_HT(\cdot,t)\|_q
$$
and, therefore, we have
\begin{align*}
   &\sup_{-h\leq z\leq h}\|\nabla_H\varpi(\cdot,z,t)\|_{\infty,M} \leq C\log(e+\|\nabla_HT\|_{q}),
\end{align*}
proving the second conclusion.
\end{proof}

Energy inequality for $\nabla T$ is stated in the next proposition.

\begin{proposition}
  \label{propnt}
  Let $\eta$ and $\theta$ be as in (\ref{etatheta}). Then, the following energy inequalities hold:
  \begin{align*}
  &\frac{d}{dt}\left( \frac{\|\nabla T\|_2^2}{2} +\frac{\|\nabla_HT\|_q^q}{q}\right)+\|(\partial_z^2T,\nabla_H\partial_z T,\sqrt\varepsilon \Delta_HT)\|_2^2
  \nonumber\\
  \leq&C_\sigma (\|\nabla_H(\eta,\theta)\|_2^2+1) (\|\nabla_HT\|_q^q+1)\log(e +\|\Delta_H(\eta,\theta)\|_{2}+\|\nabla_HT\|_q) \nonumber\\
&+C_\sigma(\|v\|_\infty^2+1)(\|\nabla T\|_2^2
+\|\eta\|_2^2+1)  +\sigma\left(\|\Delta_H\eta\|_2^2+\|\nabla_H\partial_z T\|_2^2\right),
\end{align*}
for any $\sigma>0$, if $q\in(2,4]$, where $C_\sigma$ is a positive number depending only on $h, q, \|T_0\|_\infty$, and $\sigma$; and
\begin{align*}
  &\frac{d}{dt}\left(\frac{2}{q}\|\nabla_HT\|_q^q +
\|\nabla_HT\|_2^2 \right)+\|\nabla_H\partial_zT\|_2^2+
\left\||\nabla_HT|^{\frac{q}{2}-1}\nabla_H\partial_zT\right\|_2^2
\nonumber\\
  \leq&C (\|\nabla_H(\eta,\theta)\|_2^2+ 1) (\|\nabla _HT\|_q^q+1)\log(e +\|\Delta_H\eta\|_2+\|\Delta_H\theta\|_{2}+\|\nabla_HT\|_q)\\
&+C(\|\Delta_H\eta\|_2^2+\|\nabla_H\eta\|_2^2+1)(\|\nabla _HT\|_q^q+1),
\end{align*}
if $q\in(4,\infty)$, where $C$ is a positive number depending only on $h, q$, and $\|T_0\|_\infty$.
\end{proposition}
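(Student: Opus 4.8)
The plan is to read off both inequalities from two energy identities for the temperature equation (\ref{eq3}). For the $L^2$ part I would test (\ref{eq3}) with $-\Delta T$ when $q\in(2,4]$ (producing $\tfrac12\tfrac{d}{dt}\|\nabla T\|_2^2$ together with the dissipation $\|\partial_z^2T\|_2^2+\|\nabla_H\partial_zT\|_2^2+\varepsilon\|\Delta_HT\|_2^2$, where the cross term $\int_\Omega\Delta_HT\,\partial_z^2T=\|\nabla_H\partial_zT\|_2^2$ is produced by integration by parts and periodicity), and with $-\Delta_HT$ when $q\in(4,\infty)$ (producing $\tfrac12\tfrac{d}{dt}\|\nabla_HT\|_2^2$ and only the vertical dissipation $\|\nabla_H\partial_zT\|_2^2$). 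For the $L^q$ part I would test with $-\nabla_H\cdot(|\nabla_HT|^{q-2}\nabla_HT)$; the vertical diffusion then yields the dissipation $\||\nabla_HT|^{\frac q2-1}\nabla_H\partial_zT\|_2^2$. In every case the regularizing term $\varepsilon\Delta_HT$ contributes only further nonnegative dissipation, so it never appears as an error term on the right.

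Next I would dispose of the advection $v\cdot\nabla_HT+w\partial_zT$, which is transport by the three–dimensional divergence-free field $(v,w)$. Using $\partial_zw=-\nabla_H\cdot v$ and (\ref{w}), the ``diagonal'' pieces $\int_\Omega(v,w)\cdot\nabla\tfrac{|\nabla T|^2}{2}$ and $\int_\Omega(v,w)\cdot\nabla\tfrac{|\nabla_HT|^q}{q}$ cancel exactly, since their two halves carry opposite multiples of $\nabla_H\cdot v$. What survives are terms of the schematic form $\int_\Omega|\nabla_Hv||\nabla_HT|^q$, $\int_\Omega|\nabla_Hw||\partial_zT||\nabla_HT|^{q-1}$, and $\int_\Omega|\partial_zv||\nabla_HT||\partial_zT|$ (with the power $q$ replaced by $2$ in the $L^2$ identities). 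The mixed term $\int_\Omega v\cdot\nabla_H\partial_zT\,\partial_zT$ from the $\partial_z^2T$-tested advection I would bound directly by $\|v\|_\infty\|\nabla_H\partial_zT\|_2\|\partial_zT\|_2$ and split by Young's inequality against $\|\nabla_H\partial_zT\|_2^2$; this is the origin of the $\|v\|_\infty^2(\|\nabla T\|_2^2+\cdots)$ contribution. The terms carrying $\nabla_Hw$ or a vertical derivative are handled by the anisotropic Ladyzhenskaya-type inequalities (Lemmas \ref{lad}, \ref{ladlemma}, \ref{lem2.3}) and Proposition \ref{nu4m}, bounding $|\nabla_Hw|\le C\int_{-h}^h(|\nabla_H\eta|+|\nabla_HT|)\,dz$ via (\ref{w}) and (\ref{etatheta})--(\ref{Phi}); the resulting top-order factors are absorbed into the available dissipation and into the $\|\Delta_H\eta\|_2^2$ coming from Proposition \ref{propnhetatheta} (this is exactly the role of the $\sigma$-terms in the $q\le4$ statement).

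The heart of the matter, and the \emph{main obstacle}, is the term $\int_\Omega|\nabla_Hv||\nabla_HT|^q$: it essentially demands an $L^\infty$ bound on $\nabla_Hv$, yet the absence of horizontal diffusivity means no norm producing $\nabla_H^2T$ is at our disposal, so a direct estimate fails. To overcome this I would use the decomposition $v=\zeta-\varpi$ of (\ref{zeta})--(\ref{beta}), splitting $\nabla_Hv$ into the ``temperature-independent'' part $\nabla_H\zeta$, whose horizontal divergence and curl are $\eta,\theta$ by (\ref{zeeithta}), and the ``temperature-dependent'' part $\nabla_H\varpi$. For the $\varpi$-part I invoke the Beale--Kato--Majda type bound of Proposition \ref{propzetavarpi}, giving $\int_\Omega|\nabla_H\varpi||\nabla_HT|^q\le C\log(e+\|\nabla_HT\|_q)\|\nabla_HT\|_q^q$. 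For the $\zeta$-part I apply the Br\'ezis--Gallouet--Wainger type bound of Proposition \ref{propzetavarpi} to $\int_{-h}^h\|\nabla_H\zeta\|_{\infty,M}\,dz$, and convert the remaining factor $\sup_z\|\nabla_HT(\cdot,z)\|_{q,M}^q$ into $\|\nabla_HT\|_q^q$ plus a controllable multiple of the vertical dissipation by the fundamental theorem of calculus in $z$, namely $\sup_z\|\nabla_HT(\cdot,z)\|_{q,M}^q\le C\|\nabla_HT\|_q^q+C\|\nabla_HT\|_q^{q/2}\||\nabla_HT|^{\frac q2-1}\nabla_H\partial_zT\|_2$, followed by Young's inequality.

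The decisive point of this splitting is that in both pieces the logarithms of the high-order norms $\|\Delta_H(\eta,\theta)\|_2$ and $\|\nabla_HT\|_q$ multiply only the lower-order quantity $(\|\nabla_H(\eta,\theta)\|_2^2+1)(\|\nabla_HT\|_q^q+1)$, which is precisely the form needed to later invoke the logarithmic Gronwall inequality of Lemma \ref{LogGron1}. The same decomposition, now with $|\nabla_HT|^2$ in place of $|\nabla_HT|^q$ and using $\|\nabla_HT\|_2\le C\|\nabla_HT\|_q$ (valid since $q>2$), produces the $L^2$-identity term $C\log(e+\|\nabla_HT\|_q)\|\nabla_HT\|_q^q$. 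Finally I would treat the two cases separately according to which vertical dissipation is available: when $q\in(4,\infty)$ the term $\||\nabla_HT|^{\frac q2-1}\nabla_H\partial_zT\|_2^2$ absorbs the corresponding factor and the quadratic quantities are kept in $\|\nabla_HT\|_2^2$, whereas when $q\in(2,4]$ one retains the full $\|\nabla T\|_2^2$ together with $\|\partial_z^2T\|_2^2$ and carries the free parameter $\sigma$ so that $\|\Delta_H\eta\|_2^2$ and $\|\nabla_H\partial_zT\|_2^2$ can be absorbed later; collecting all the estimates above and absorbing the top-order terms yields the two stated inequalities.
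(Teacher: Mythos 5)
Your proposal follows essentially the same route as the paper: the same three test functions ($-\partial_z^2T$, $-\Delta_HT$, and $-\nabla_H\cdot(|\nabla_HT|^{q-2}\nabla_HT)$), the same decomposition $v=\zeta-\varpi$ with the Br\'ezis--Gallouet--Wainger and Beale--Kato--Majda bounds of Proposition \ref{propzetavarpi} for the critical term $\int_\Omega|\nabla_Hv||\nabla_HT|^q\,dxdydz$, the same fundamental-theorem-of-calculus bound on $\sup_z\|\nabla_HT(\cdot,z)\|_{q,M}^q$ against the dissipation $\||\nabla_HT|^{\frac q2-1}\nabla_H\partial_zT\|_2$, and the same case split at $q=4$. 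The only (harmless, easily repaired) deviation is that your full integration by parts of the advection in the $\|\partial_zT\|_2^2$-energy produces the extra commutator term $\int_\Omega\partial_zv\cdot\nabla_HT\,\partial_zT\,dxdydz$, whose natural bound introduces a factor of $\|u\|_4$ that does not appear on the stated right-hand side; the paper avoids this by estimating $\int_\Omega v\cdot\nabla_HT\,\partial_z^2T\,dxdydz$ directly by $\|v\|_\infty\|\nabla_HT\|_2\|\partial_z^2T\|_2$ without integrating by parts in $z$.
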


\begin{proof}
Integration by parts and using the H\"older inequality yield
\begin{align*}
  \int_\Omega|\partial_zT|^4dxdydz=-\int_\Omega\partial_z(|\partial_z T|^2\partial_z T)Tdxdydz
  \leq3\|T\|_\infty\|\partial_z^2T\|_2\|\partial_z T\|_4^2,
\end{align*}
which implies
\begin{equation}
  \label{zt4-0}\|\partial_zT\|_4^2\leq 3\|T\|_\infty\|\partial_z^2T\|_2.
\end{equation}
Multiplying equation (\ref{eq3}) by $-\partial_z^2T$ and integrating the resultant over $\Omega$, it follows from integration by parts, Proposition \ref{prop3.2}, (\ref{zt4-0}), and the H\"older and Young inequalities that
\begin{align}
  &\frac{1}{2}\frac{d}{dt}\|\partial_zT\|_2^2+\|\partial_z^2T\|_2^2+\varepsilon\|\nabla_H\partial_zT\|_2^2\nonumber\\
  =&\int_\Omega(v\cdot\nabla_HT+w\partial_zT)\partial_z^2Tdxdydz\nonumber\\
  =&\int_\Omega(v\cdot\nabla_HT\partial_z^2T+\frac{1}{2}(\nabla_H\cdot v)|\partial_zT|^2)dxdydz\nonumber\\
  =&\int_\Omega(v\cdot\nabla_HT\partial_z^2T+\frac{1}{2}(\eta-\Phi)|\partial_zT|^2)dxdydz\nonumber\\
  \leq&C(\|v\|_\infty\|\nabla_HT\|_2\|\partial_z^2T\|_2+\|\Phi\|_\infty\|\partial_zT\|_2^2+\|\eta\|_2
  \|\partial_zT\|_4^2)\nonumber\\
  \leq&C(\|v\|_\infty\|\nabla_HT\|_2\|\partial_z^2T\|_2+\|\Phi\|_\infty\|\partial_zT\|_2^2+\|\eta\|_2
  \|T\|_\infty\|\partial_z^2T\|_2)\nonumber\\
  \leq&\sigma \|\partial_z^2T\|_2^2 +C_\sigma(\|v\|_\infty^2\|\nabla_HT\|_2^2 +\|\partial_zT\|_2^2+\|\eta\|_2^2)\nonumber\\
  \leq&\sigma \|\partial_z^2T\|_2^2 +C_\sigma(\|v\|_\infty^2+1)
 (
  \|\nabla_HT\|_2^2 +\|\partial_zT\|_2^2+\|\eta\|_2^2+1),\label{nzt2}
\end{align}
for any $\sigma > 0$ (to be chosen later)  and for some $C_\sigma>0$.

Recalling the definitions of $\eta$ and $\Phi$, (\ref{etatheta}) and (\ref{Phi}), respectively, then by the H\"older inequality, one can easily check
$$
\|\nabla_H\partial_zw\|_2, \|\nabla_Hw\|_2\leq C(\|\nabla_H\eta\|_2+\|\nabla_HT\|_2).
$$
Thanks to the above inequality, multiplying equation (\ref{eq3}) by $-\Delta_HT$, and integrating the resultant over $\Omega$, it follows from integration by parts, Proposition \ref{prop3.2}, and the H\"older and Young inequalities that
\begin{align}
  &\frac{1}{2}\frac{d}{dt}\|\nabla_HT\|_2^2+\|\nabla_H\partial_zT\|_2^2
  +\varepsilon\|\Delta_HT\|_2^2\nonumber\\
  =&\int_\Omega(v\cdot\nabla_HT+w\partial_zT)\Delta_HTdxdydz\nonumber\\
  =&-\int_\Omega(\nabla_HT\cdot\nabla_Hv +\partial_zT\nabla_Hw)\cdot\nabla_HTdxdydz\nonumber\\
  =&-\int_\Omega[\nabla_HT\cdot\nabla_Hv\cdot\nabla_HT -(\nabla_H\partial_zw\cdot\nabla_HT+\nabla_Hw\cdot\partial_z\nabla_HT)T]
  dxdydz\nonumber\\
  \leq&\int_\Omega|\nabla_Hv||\nabla_HT|^2dxdydz+C(\|\nabla_H\partial_zw\|_2\|\nabla_HT\|_2 +\|\nabla_Hw\|_2\|\nabla_H\partial_zT\|_2)\nonumber\\
  \leq&\int_\Omega|\nabla_Hv||\nabla_HT|^2dxdydz+C(\|\nabla_H\eta\|_2+\|\nabla_HT\|_2)(
  \|\nabla_HT\|_2+\|\nabla_H\partial_zT\|_2)\nonumber\\
  \leq&\sigma\|\nabla_H\partial_zT\|_2^2+C_\sigma(\|\nabla_H\eta\|_2^2 +\|\nabla_HT\|_2^2)+\int_\Omega|\nabla_Hv||\nabla_HT|^2dxdydz, \label{nht2}
\end{align}
for any $\sigma > 0$ (to be chosen later) and for some $C_\sigma>0$.

Multiplying equation (\ref{eq3}) by $-\text{div}_H(|\nabla_HT|^{q-2}\nabla_HT)$, for $q\in(2,\infty)$,
integrating the resultant over $\Omega$, and noticing that $\|T\|_\infty\leq\|T_0\|_\infty$, it follows from integration by parts, (\ref{w}), (\ref{etatheta}), and (\ref{Phi}), that
\begin{align*}
  &\frac{1}{q}\frac{d}{dt}\|\nabla_HT\|_q^q+ \int_\Omega|\nabla_HT|^{q-2}(|\nabla_H\partial_zT|^2 +(q-2)|\partial_z|\nabla_HT||^2\nonumber\\
  &\qquad+\varepsilon|\nabla^2_HT|^2 +(q-2)\varepsilon|\nabla_H|\nabla_HT||^2)dxdydz\nonumber\\
  =&-\int_\Omega|\nabla_HT|^{q-2}(\nabla_HT\cdot\nabla_H v+\partial_zT\nabla_Hw) \cdot\nabla_HTdxdydz\nonumber\\
  =&-\int_\Omega[|\nabla_HT|^{q-2}\nabla_HT\cdot\nabla_H v\cdot \nabla_HT-T|\nabla_HT|^{q-2}\nabla_H\partial_zw\cdot\nabla_HT
  \nonumber\\
  &+T\nabla_Hw\cdot\partial_z(|\nabla_HT|^{q-2}\nabla_HT)]dxdydz\nonumber\\
  \leq&C\int_M\left(\int_{-h}^h|\nabla_H(\eta-\Phi)|dz\right)\left(\int_{-h}^h |\nabla_HT|^{q-2} |\nabla_H\partial_zT|dz\right)dxdy \nonumber\\
  &+C\int_\Omega \big(|\nabla_Hv||\nabla_HT|^q+|\nabla_H(\eta-\Phi)||\nabla_HT|^{q-1} \big)dxdydz,
\end{align*}
which, summed with (\ref{nht2}) and using the Young inequality, gives
\begin{align}
&\frac{d}{dt}\left(\frac{\|\nabla_HT\|_q^q}{q}+\frac{\|\nabla_HT\|_2^2}{2} \right)+\left\||\nabla_HT|^{\frac{q}{2}-1}\nabla_H\partial_zT\right\|_2^2+
\|\nabla_H\partial_zT\|_2^2+\varepsilon\|\Delta_HT\|_2^2  \nonumber \\
\leq&\sigma\|\nabla_H\partial_zT\|_2^2+C_\sigma(\|\nabla_H\eta\|_2^2 +\|\nabla_HT\|_2^2) \nonumber\\
&+C\int_\Omega |\nabla_Hv|(|\nabla_HT|^q+1)dxdydz+C\int_\Omega|\nabla_H(\eta-\Phi)||\nabla_HT|^{q-1} dxdydz \nonumber\\
  &+C\int_M\left(\int_{-h}^h|\nabla_H(\eta-\Phi)|dz\right)\left(\int_{-h}^h |\nabla_HT|^{q-2} |\nabla_H\partial_zT|dz\right)dxdy\nonumber\\
=:&\sigma\|\nabla_H\partial_zT\|_2^2+C_\sigma(\|\nabla_H\eta\|_2^2 +\|\nabla_HT\|_2^2)+C(J_1+J_2+J_3).\label{nhtq+2}
\end{align}

We have to estimate the terms $J_1, J_2$, and $J_3$ in (\ref{nhtq+2}).
First, we show that $J_2\leq (q-1)J_3$. In fact, since $T$ is odd and
periodic with respect to $z$, one has
$T|_{z=-h}=T|_{z=h}=-T|_{z=-h}=0$. Therefore, we have
\begin{equation*}
|\nabla_HT(x,y,z,t)|^{q-1}\leq (q-1)\int_{-h}^h|\nabla_HT(x,y,z,t)|^{q-2}|\nabla_H\partial_zT(x,y,z,t)|dz
\end{equation*}
and, thus,
\begin{align*}
  J_2=&\int_\Omega |\nabla_H(\eta-\Phi)||\nabla_HT|^{q-1}dxdydz \nonumber\\
\leq& (q-1)\int_M\int_{-h}^h|\nabla_H(\eta-\Phi)|dz\int_{-h}^h|\nabla_HT|^{q-2}|\nabla_H
\partial_zT|dzdxdy
= (q-1)J_3.
\end{align*}

Next, we estimate $J_3$.
By the H\"older and Minkowski inequalities, we have
\begin{align*}
  J_{3,1}:=&\int_M\left(\int_{-h}^h|\nabla_H\eta|dz\right)
  \left(\int_{-h}^h|\nabla_HT|^{q-2}|\nabla_H
  \partial_zT|dz\right)dxdy \nonumber\\
  \leq& \int_M\left(\int_{-h}^h|\nabla_H\eta|dz\right)
  \left(\int_{-h}^h|\nabla_HT|^{q-2}dz
  \right)^{\frac12}\left(\int_{-h}^h|\nabla_HT|^{q-2}|\nabla_H
  \partial_zT|^2dz\right)^{\frac12}dxdy\nonumber\\
  \leq& \left\|\int_{-h}^h|\nabla_H\eta|dz\right\|_{q,M}\left\| \left(\int_{-h}^h|\nabla_HT|^{q-2}dz
  \right)^{\frac12}\right\|_{\frac{2q}{q-2},M}\left\||\nabla_HT |^{\frac{q}{2}-1}\nabla_H\partial_zT\right\|_2\nonumber\\
  \leq& \left(\int_{-h}^h\|\nabla_H\eta\|_{q,M}dz\right)  \left(\int_{-h}^h\|\nabla_HT\|_{q,M}^{q-2}dz
  \right)^{\frac12}\left\||\nabla_HT |^{\frac{q}{2}-1}\nabla_H\partial_zT\right\|_2\nonumber\\
  \leq& C\left(\int_{-h}^h\|\nabla_H\eta\|_{q,M}dz\right) \|\nabla_HT\|_{q}^{\frac{q-2}{2}}\left\||\nabla_HT |^{\frac{q}{2}-1}\nabla_H\partial_zT\right\|_2
\end{align*}
and, similarly,
\begin{align*}
  J_{3,2}:=&\int_M\left( \int_{-h}^h|\nabla_HT|dz\right)\left( \int_{-h}^h|\nabla_HT|^{q-2}|\nabla_H
  \partial_zT|dz\right)dxdy \nonumber\\
  \leq& C\left(\int_{-h}^h\|\nabla_HT\|_{q,M}dz\right) \|\nabla_HT\|_{q}^{\frac{q-2}{2}}\left\||\nabla_HT |^{\frac{q}{2}-1}\nabla_H\partial_zT\right\|_2.
\end{align*}
Therefore, we have by the H\"older and Young inequalities that
\begin{align}
  J_{3,2}
  \leq&\|\nabla_HT\|_{q}^{\frac{q}{2}}\left\||\nabla_HT |^{\frac{q}{2}-1}\nabla_H\partial_zT\right\|_2\nonumber\\
  \leq&\sigma \left\||\nabla_HT |^{\frac{q}{2}-1}\nabla_H\partial_zT\right\|_2^2+C_\sigma
  \|\nabla_HT\|_q^q\label{nhtq2-1}
\end{align}
for any $\sigma>0$ (to be chosen later) and for some $C_\sigma>0$. Moreover, from the above and by the Gagaliardo-Nirenberg and H\"older inequalities we have
\begin{align*}
  J_{3,1}
  \leq& C\left(\int_{-h}^h\|\nabla_H\eta\|_{q,M}dz\right) \|\nabla_HT\|_{q}^{\frac{q-2}{2}}\left\||\nabla_HT |^{\frac{q}{2}-1}\nabla_H\partial_zT\right\|_2\nonumber\\
  \leq&C\left(\int_{-h}^h\|\nabla_H\eta\|_{2,M}^{\frac2q}\|\Delta_H\eta\|_{2,M}
  ^{1-\frac2q}dz\right)\|\nabla_HT\|_{q}^{\frac{q-2}{2}}\left\||\nabla_HT |^{\frac{q}{2}-1}\nabla_H\partial_zT\right\|_2\nonumber\\
  \leq&C\|\nabla_H\eta\|_{2}^{\frac2q}\|\Delta_H\eta\|_{2}
  ^{1-\frac2q}\|\nabla_HT\|_{q}^{\frac{q-2}{2}}\left\||\nabla_HT |^{\frac{q}{2}-1}\nabla_H\partial_zT\right\|_2.
\end{align*}
We further estimate $J_{3,1}$, in accordance with two different ranges, by the Young inequality as follows:
\begin{align*}
  J_{3,1}
  \leq&\sigma\left(\left\||\nabla_HT |^{\frac{q}{2}-1}\nabla_H\partial_zT\right\|_2^2+\|\Delta_H\eta\|_2^2\right) +C_\sigma\|\nabla_H\eta\|_2^2\|\nabla_HT\|_q^{\frac{q-2}{2}q}\nonumber\\
\leq&\sigma\left(\left\||\nabla_HT |^{\frac{q}{2}-1}\nabla_H\partial_zT\right\|_2^2+\|\Delta_H\eta\|_2^2\right) +C_\sigma\|\nabla_H\eta\|_2^2(\|\nabla_HT\|_q^q+1),
\end{align*}
if $q\in(2,4]$, and
\begin{align*}
  J_{3,1}
  \leq&\sigma\left\||\nabla_HT|^{\frac{q}{2}-1}\nabla_H\partial_zT \right\|_2^2+C_\sigma(\|\Delta_H\eta\|_2^2\|\nabla_HT\|_q^q
+\|\nabla_H\eta\|_2^2),
\end{align*}
if $q\in(4,\infty)$, for any $\sigma>0$ and for some $C_\sigma>0$.
Recalling
(\ref{nhtq2-1}) and noticing that $J_3\leq J_{3,1}+J_{3,2}$, we have
\begin{align}
  J_3
\leq&\sigma\left(\left\||\nabla_HT |^{\frac{q}{2}-1}\nabla_H\partial_zT\right\|_2^2+\|\Delta_H\eta\|_2^2\right) +C_\sigma(\|\nabla_H\eta\|_2^2+1)(\|\nabla_HT\|_q^q+1),\label{J3-}
\end{align}
if $q\in(2,4]$, and
\begin{align}
  J_3
\leq&\sigma\left\||\nabla_HT|^{\frac{q}{2}-1}\nabla_H\partial_zT \right\|_2^2+C_\sigma(\|\Delta_H\eta\|_2^2+\|\nabla_H\eta\|_2^2+1)( \|\nabla_HT\|_q^q+1), \label{J3+}
\end{align}
if $q\in(4,\infty)$, for any $\sigma>0$ and for some $C_\sigma>0$.

Finally, we estimate the term $J_1$. Recalling the decomposition $v=\zeta-\varpi$, we have
\begin{align*}
 J_1=& \int_\Omega|\nabla_Hv|(1+|\nabla_HT|^q)dxdydz \nonumber\\ \leq&\int_\Omega(|\nabla_H\zeta|+|\nabla_H\varpi|)(1+|\nabla_H T|^q)dxdydz\nonumber\\
  \leq&\left(\int_{-h}^h\|\nabla_H\zeta\|_{\infty,M}dz\right)\left(\sup_{-h\leq z\leq h}\|\nabla_HT(\cdot,z,t)\|_{q,M}^q+1\right)\nonumber\\
  &+\left(\sup_{-h\leq z\leq h}\|\nabla_H\varpi(\cdot,z,t)\|_{\infty,M}\right)(\|\nabla_HT\|_q^q+1).
\end{align*}
Recalling that $T|_{z=-h}=0$, it follows from the H\"older inequality that
\begin{align*}
\sup_{-h\leq z\leq h}\|\nabla_HT(\cdot,z,t)\|_{q,M}^q=&\sup_{-h\leq z\leq h}\int_M|\nabla_HT(\cdot,z)|^qdxdy\\
  =&q\sup_{-h\leq z\leq h}\int_M\left(\int_{-h}^z|\nabla_HT|^{q-2} \nabla_HT\cdot\nabla_H\partial_zTd\xi\right)dxdy\\
  \leq&q\int_M\int_{-h}^h|\nabla_HT|^{q-1}|\nabla_H\partial_zT|dzdxdy\nonumber\\
\leq& q\|\nabla_HT\|_q^{\frac{q}{2}}
  \left\||\nabla_HT|^{\frac{q}{2}-1}\nabla_H\partial_zT\right\|_2.
\end{align*}
With the aid of the above inequality, applying Proposition \ref{propzetavarpi}, and using the Young inequality, we obtain
\begin{align*}
  &\left(\int_{-h}^h\|\nabla_H\zeta\|_{\infty,M}dz\right)
  \left(\sup_{-h\leq z\leq h}\|\nabla_HT\|_{q,M}^q+1\right)\nonumber\\
  \leq&C(\|\nabla_H\eta\|_2 +\|\nabla_H\theta\|_2 +1 )
  \log^{\frac{1}{2}}(e+\|\Delta_H\eta\|_2+\|\Delta_H\theta\|_{2})\nonumber\\
  &\times\left(\|\nabla_HT\|_q^{\frac{q}{2}}
  \left\||\nabla_HT|^{\frac{q}{2}-1}\nabla_H\partial_zT\right\|_2
  +1\right)\nonumber\\
  \leq&\sigma\left\||\nabla_HT|^{\frac{q}{2}-1}\nabla_H \partial_zT\right\|_2^2 +C_\sigma(\|\nabla_H\eta\|_2^2+\|\nabla_H\theta\|_2^2+1)
  \nonumber\\
  &\times(\|\nabla_HT\|_q^q+1)\log(e+\|\Delta_H\eta\|_2
+\|\Delta_H\theta\|_{2})
\end{align*}
for any $\sigma>0$ and for some $C_\sigma>0$, and
\begin{align*}
  \left(\sup_{-h\leq z\leq h}\|\nabla_H\varpi(\cdot,z,t)\|_{\infty,M}\right)(\|\nabla_HT\|_q^q+1)
\leq C(\|\nabla_HT\|_q^q+1)\log(e+\|\nabla_HT\|_{q}).
\end{align*}
Therefore, we have
\begin{align}
 J_1
\leq&\sigma\left\||\nabla_HT|^{\frac{q}{2}-1}\nabla_H \partial_zT\right\|_2^2 +C_\sigma(\|\nabla_H\eta\|_2^2+\|\nabla_H\theta\|_2^2+1) (\|\nabla_HT\|_q^q+1)\nonumber\\
  &\times\log(e+\|\Delta_H\eta\|_2
+\|\Delta_H\theta\|_{2}+\|\nabla_HT\|_q) \label{J1}
\end{align}
for any $\sigma>0$ and for some $C_\sigma>0$.

Thanks to the estimates for $J_1$ and $J_3$, i.e. (\ref{J3-})--(\ref{J1}), and recalling that $J_2\leq(q-1)J_3$, it follows from (\ref{nhtq+2}) that
\begin{align}
  &\frac{d}{dt}\left(\frac{\|\nabla_HT\|_q^q}{q} +\frac{\|\nabla_HT\|_2^2}{2}\right)+\|\nabla_H\partial_zT\|_2^2+
\left\||\nabla_HT|^{\frac{q}{2}-1}\nabla_H\partial_zT\right\|_2^2
+\varepsilon\|\Delta_HT\|_2^2\nonumber\\
  \leq&\sigma\left(\|\nabla_H\partial_zT\|_2^2+ \|\Delta_H\eta\|_2^2+\left\||\nabla_HT|^{\frac{q}{2}-1} \nabla_H\partial_zT\right\|_2^2\right)+C_\sigma (\|\nabla_H(\eta,\theta)\|_2^2+ 1)  \nonumber\\
  &\times(\|\nabla _HT\|_q^q+1)\log(e +\|\Delta_H\eta\|_2+\|\Delta_H\theta\|_{2}+\|\nabla_HT\|_q)\label{nhtq+2-}
\end{align}
for any $\sigma>0$ and for some $C_\sigma>0$, if $q\in(2,4]$, and
\begin{align}
  &\frac{d}{dt}\left(\frac{2}{q}\|\nabla_HT\|_q^q +
\|\nabla_HT\|_2^2 \right)+\|\nabla_H\partial_zT\|_2^2+
\left\||\nabla_HT|^{\frac{q}{2}-1}\nabla_H\partial_zT\right\|_2^2
\nonumber\\
  \leq&C (\|\nabla_H(\eta,\theta)\|_2^2+ 1) (\|\nabla _HT\|_q^q+1)\log(e +\|\Delta_H\eta\|_2+\|\Delta_H\theta\|_{2}+\|\nabla_HT\|_q)\nonumber\\
&+C(\|\Delta_H\eta\|_2^2+\|\nabla_H\eta\|_2^2+1)(\|\nabla _HT\|_q^q+1), \label{nhtq+2+}
\end{align}
if $q\in(4,\infty)$.

The first conclusion follows from summing (\ref{nzt2}) and (\ref{nhtq+2-}), and the second one follows from
(\ref{nhtq+2+}).
\end{proof}

\subsection{A priori estimates on $(\nabla_H\eta,\nabla_H\theta,\nabla T)$}
Combining the energy inequalities established in the previous two
subsections and applying the logarithmic type Gronwall inequality, i.e.,
Lemma \ref{LogGron1}, we are able to obtain the required a priori estimates
on $\nabla_H\eta$, $\nabla_H\theta$, and $\nabla T$. In fact, we have the
following proposition.

\begin{proposition}\label{propapriALL-N2T}
Given $\mathcal T\in(0,\infty)$. There is a positive number $\varepsilon_0
\in(0,1)$ depending only on $h$ and $\|T_0\|_\infty$, such that, for any $\varepsilon\in(0,\varepsilon_0)$ and any $q\in(2,\infty)$, we have the
following estimate:
\begin{align*}
\sup_{0\leq t\leq\mathcal T}(\|\nabla_H(\eta,\theta)\|_2^2&+\|\nabla T\|_2^2+\|\nabla_HT\|_q^q)+\int_0^{\mathcal T}(\|\Delta_H(\eta,\theta)\|_2^2\nonumber\\
&+\|(\partial_z^2 T,\nabla_H\partial_zT)\|_2^2+\varepsilon\|(\nabla_H\partial_z\eta, \nabla_H\partial_z\theta,\Delta_HT)dt\leq C,
\end{align*}
where $C$ is a positive constant depending only on $h, \mathcal T,$ and $\|v_0\|_{H^2}+\|T_0\|_{H^1\cap L^\infty}+\|\nabla_HT_0\|_q$; in particular, $C$ is independent of $\varepsilon\in(0,\varepsilon_0)$.
\end{proposition}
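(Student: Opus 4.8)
The plan is to add the energy inequality for $(\nabla_H\eta,\nabla_H\theta)$ from Proposition \ref{propnhetatheta} to the energy inequality for $\nabla T$ from Proposition \ref{propnt}, and to cast the resulting single differential inequality into the exact form handled by the logarithmic Gronwall inequality, Lemma \ref{LogGron1}. Concretely, I set $\mathcal A=\|\nabla_H(\eta,\theta)\|_2^2+\|\nabla T\|_2^2+\|\nabla_HT\|_q^q$ and let $\mathcal B$ collect the dissipation $\|\Delta_H(\eta,\theta)\|_2^2+\|(\partial_z^2T,\nabla_H\partial_zT)\|_2^2+\varepsilon\|(\nabla_H\partial_z\eta,\nabla_H\partial_z\theta,\Delta_HT)\|_2^2$. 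Working first with $q\in(2,4]$, I would choose the free parameter $\sigma$ in Proposition \ref{propnt} small enough that the terms $\sigma(\|\Delta_H\eta\|_2^2+\|\nabla_H\partial_zT\|_2^2)$ are absorbed into the dissipation on the left, and I would fix $\varepsilon_0$ so small, depending only on $h$ and $\|T_0\|_\infty$ through the constant in Proposition \ref{propnhetatheta}, that $C\varepsilon^2\|\Delta_HT\|_2^2\leq\tfrac12\varepsilon\|\Delta_HT\|_2^2$ for $\varepsilon<\varepsilon_0$, so the $\varepsilon^2$-term is swallowed by the $\varepsilon\|\Delta_HT\|_2^2$ part of $\mathcal B$. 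After these absorptions the summed inequality reads $\tfrac{d}{dt}\mathcal A+\mathcal B\leq(\text{critical log term})+(\text{coefficient})\cdot(\mathcal A+e)+(\text{forcing})$.

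The decisive point is to recognize that the genuinely critical term fits the $\alpha$-hypothesis of Lemma \ref{LogGron1} while all coefficients of $(\mathcal A+e)$ are integrable in time. The critical term is $C(\|\nabla_H(\eta,\theta)\|_2^2+1)(\|\nabla_HT\|_q^q+1)\log(e+\|\Delta_H(\eta,\theta)\|_2+\|\nabla_HT\|_q)$. Since $\|\nabla_HT\|_q^q+1\leq\mathcal A+e$ and, using $\|\Delta_H(\eta,\theta)\|_2\leq\sqrt{\mathcal B}$ together with $\|\nabla_HT\|_q\leq\mathcal A^{1/q}$, the logarithm is bounded by $C\log(\mathcal A+\mathcal B+e)$, I would write this term as $n(t)(\mathcal A+e)\log(\mathcal A+\mathcal B+e)$ with $n(t)=C(\|\nabla_H(\eta,\theta)\|_2^2+1)$. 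This $n$ satisfies $n(t)\leq K(\mathcal A+e)$, i.e.\ the growth hypothesis with $\alpha=1$, and, crucially, $n\in L^1(0,\mathcal T)$, because Corollary \ref{apriH1} already provides $\int_0^{\mathcal T}\|\nabla_H(\eta,\theta)\|_2^2\,dt\leq C$. The remaining coefficients of $(\mathcal A+e)$, namely $\|v\|_\infty^2$, $\|u\|_4^2$, $\|(\varphi,\psi)\|_2\|\nabla_H(\varphi,\psi)\|_2$ and $\|(\varphi,\psi)\|_2^2$, are all in $L^1(0,\mathcal T)$ thanks to Corollary \ref{apriH1} (which bounds $\sup_t\|u\|_4^4$ and, via (\ref{vinf}), makes $\|v\|_\infty^2$ integrable) and Proposition \ref{aprihu}; the pure forcing terms are likewise integrable. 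Lemma \ref{LogGron1} then yields $\sup_{[0,\mathcal T]}\mathcal A+\int_0^{\mathcal T}\mathcal B\,dt\leq C$, with $C$ independent of $\varepsilon\in(0,\varepsilon_0)$, proving the claim for $q\in(2,4]$.

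For $q\in(4,\infty)$ I would bootstrap. Because $\Omega$ is bounded, $\nabla_HT_0\in L^q(\Omega)\hookrightarrow L^4(\Omega)$, so the case just treated (with exponent $4$) already supplies $\sup_t\|\nabla_H(\eta,\theta)\|_2^2\leq C$, $\int_0^{\mathcal T}\|\Delta_H(\eta,\theta)\|_2^2\,dt\leq C$ and $\int_0^{\mathcal T}\|(\partial_z^2T,\nabla_H\partial_zT)\|_2^2\,dt\leq C$; these dissipation bounds are $q$-independent and already deliver the second and third assertions of the statement. With these quantities now known, the second inequality of Proposition \ref{propnt} becomes, after setting $a=\tfrac2q\|\nabla_HT\|_q^q+\|\nabla_HT\|_2^2$ and using $\|\nabla_HT\|_q\leq C(a+e)^{1/q}$, an inequality of the schematic form $\tfrac{d}{dt}a\leq C(a+e)\log(a+e)+Ch(t)(a+e)$ with $h(t)=1+\|\Delta_H(\eta,\theta)\|_2+\|\Delta_H\eta\|_2^2\in L^1(0,\mathcal T)$. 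Dividing by $a+e$ reduces this to a linear Gronwall inequality for $\log(a+e)$ (a special, $n\equiv0$ instance of Lemma \ref{LogGron1}), whence $\sup_{[0,\mathcal T]}\|\nabla_HT\|_q^q\leq C$. Together with the $q$-independent bounds this completes the proof.

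The main obstacle, and the place where the argument fails under a naive application of a classical logarithmic Gronwall lemma, is precisely the critical term $(\|\nabla_H(\eta,\theta)\|_2^2+1)(\|\nabla_HT\|_q^q+1)\log(\cdots)$: the logarithm sees the top-order dissipation $\|\Delta_H(\eta,\theta)\|_2$ and the coefficient is essentially quadratic in the very quantity $\mathcal A$ we are trying to bound. The scheme closes only because one factor of the product can be absorbed into $(\mathcal A+e)$ while the other factor, $\|\nabla_H(\eta,\theta)\|_2^2+1$, is simultaneously bounded by $K(\mathcal A+e)$ and integrable in time, so that the $\alpha=1$ hypothesis of Lemma \ref{LogGron1} is met; and that integrability is exactly the $L^1_t$ control of $\|\nabla_H(\eta,\theta)\|_2^2$ furnished by Corollary \ref{apriH1}. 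Arranging these two facts to hold at once, while keeping every constant independent of $\varepsilon$, is the heart of the matter.
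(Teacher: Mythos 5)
Your proposal is correct and follows essentially the same route as the paper: summing Propositions \ref{propnhetatheta} and \ref{propnt} with $\sigma$ fixed small, choosing $\varepsilon_0$ so that $C\varepsilon^2\|\Delta_HT\|_2^2$ is absorbed by $\varepsilon\|\Delta_HT\|_2^2$, casting the critical term as $n(t)(\mathcal A+e)\log(\mathcal A+\mathcal B+e)$ with $n(t)=C(\|\nabla_H(\eta,\theta)\|_2^2+1)$ simultaneously in $L^1(0,\mathcal T)$ (by Corollary \ref{apriH1}) and bounded by $K(\mathcal A+e)$ so that Lemma \ref{LogGron1} applies with $\alpha=1$, and then bootstrapping from the $q=4$ case to $q\in(4,\infty)$ via the log-splitting in the second inequality of Proposition \ref{propnt}. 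This is precisely the paper's argument, including the use of (\ref{vinf}) to make $\|v\|_\infty^2$ time-integrable.
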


\begin{proof}
We first consider the case that $q\in(2,4]$. By Proposition \ref{propnhetatheta} and Proposition \ref{propnt}, where we choose $\sigma =\frac{1}{4}$, we have
\begin{align*}
&\frac{d}{dt}\|\nabla_H(\eta,\theta)\|_2^2 +\|(\Delta_H\eta,\Delta_H\theta,\sqrt\varepsilon \nabla_H\partial_z\eta, \sqrt\varepsilon \nabla_H\partial_z\theta)\|_2^2\\
  \leq&C(\|v\|_\infty^2+\|u\|_4^2 +\|(\varphi,\psi)\|_2 \|\nabla_H(\varphi,\psi)\|_2+\|(\varphi,\psi)\|_2^2+1)
\nonumber\\
   &\times\|\nabla_H(\eta,\theta, T)\|_2^2+C(\|(\eta,\theta,\varphi,\psi)\|_2^2+\|u\|_4^4+1)\nonumber\\
   & \times(\|\nabla_H(v,\eta, \theta,\varphi,\psi)\|_2^2 +\|\partial_zT\|_2^2+1) +C\varepsilon^2\|\Delta_HT\|_2^2,
\end{align*}
where $C$ is a positive constant depending only on $h$ and $\|T_0\|_\infty$, and
  \begin{align*}
  &\frac{d}{dt}\left( \frac{\|\nabla T\|_2^2}{2} +\frac{\|\nabla_HT\|_q^q}{q}\right)+\|(\partial_z^2T,\nabla_H\partial_z T,\sqrt\varepsilon \Delta_HT)\|_2^2
  \nonumber\\
  \leq&C(\|\nabla_H(\eta,\theta)\|_2^2+1) (\|\nabla_HT\|_q^q+1)\log(e +\|\Delta_H(\eta,\theta)\|_{2}+\|\nabla_HT\|_q) \nonumber\\
&+C(\|v\|_\infty^2+1)\|\nabla T\|_2^2
+C(\|v\|_\infty^2+1)(\|\eta\|_2^2+1)+\frac14\left(\|\Delta_H\eta\|_2^2+\|\nabla_H\partial_z T\|_2^2\right)
\end{align*}
provided $q\in(2,4]$, where $C$ is a positive
number depending only on $h, q,$ and $\|T_0\|_\infty$. Choose a
small positive number $\varepsilon_0\in(0,1)$ depending only on $h$ and
$\|T_0\|_\infty$ and let $\varepsilon\in(0,\varepsilon_0)$. Summing the
above two inequalities and denoting
\begin{eqnarray*}
  &A_3=\|\nabla_H(\eta,\theta)\|_2^2+\frac{\|\nabla T\|_2^2}{2}+\frac{\|
\nabla_HT\|_q^q}{q}, \\
&B_3=\frac12\|(\Delta_H\eta,\Delta_H\theta,\sqrt\varepsilon \nabla_H\partial_z\eta, \sqrt\varepsilon \nabla_H\partial_z\theta)\|_2^2+\|(\partial_z^2T,\nabla_H\partial_z T,\sqrt\varepsilon \Delta_HT)\|_2^2,\\
&\ell_3(t)=(\|v\|_\infty^2+\|u\|_4^2 +\|(\varphi,\psi)\|_2 \|\nabla_H(\varphi,\psi)\|_2+\|(\varphi,\psi)\|_2^2)(t)+1,\\
&n_3(t)=\|\nabla_H(\eta,\theta)\|_2^2(t)+1,\quad f_3(t)=(\|v\|_\infty^2(t)+1)(\|\eta\|_2^2(t)+1),
\end{eqnarray*}
one obtains
\begin{equation}\label{ineqA3B3}
A_3'+B_3\leq C(\ell_3(t)+n_3(t)\log(A_3+B_3+e))A_3+Cf_3(t).
\end{equation}
Recalling (\ref{vinf}), i.e., $\|v\|_\infty^2\leq\log (A_2+B_2)$, where
$A_2$ and $B_2$ are given by (\ref{A2}) and (\ref{B2}), respectively, and noticing that $\log z\leq\log(1+z)\leq z$, for $z>0$, we have by Corollary \ref{apriH1} that
\begin{equation}
\int_0^{\mathcal T}\|v\|_\infty^2dt\leq C\int_0^{\mathcal T} \log(A_2+B_2)dt\leq C\int_0^{\mathcal T}(A_2+B_2)dt\leq C.\label{vL2Linfty}
\end{equation}
With the aid of this, and applying Corollary \ref{apriH1} and Proposition \ref{aprihu}, we have
\begin{equation*}
  \int_0^{\mathcal T}(\ell_3(t)+n_3(t)+f_3(t))dt\leq C
\end{equation*}
for a positive constant $C$ depending only on $h, \mathcal T, \|(v_0, T_0)\|_\infty,$ and $\|\nabla_Hv_0\|_2+\|\partial_zv_0\|_{H^1}$. Thanks to the above and noticing that $n_3\leq A_3$, one can apply
Lemma \ref{LogGron1} to (\ref{ineqA3B3}) and obtains
\begin{equation}\label{E4}
\sup_{0\leq t\leq T}A_3(t)+\int_0^{\mathcal T}B_3(t)dt\leq C,
\end{equation}
where $C$ is a positive constant depending only on $h, \mathcal T,$
and $\|v_0\|_{H^2}+\|T_0\|_{H^1\cap L^\infty}+\|\nabla_HT_0\|_q$.
This proves the conclusion for the case $q\in(2,4]$.

We now consider the case when $q\in(4,\infty)$. Thanks to what we
have proven in (\ref{E4}), we have
\begin{align}
&\sup_{0\leq t\leq\mathcal T}(\|\nabla_H(\eta,\theta)\|_2^2(t)+\|\nabla T\|_2^2(t)+\|\nabla_HT\|_4^4(t))\nonumber\\
&+\int_0^{\mathcal T}(\|\Delta_H(\eta,\theta)\|_2^2+\|(\partial_z^2 T,\nabla_H\partial_zT)\|_2^2)dt\leq C, \label{apriALL-N2T'}
\end{align}
where $C$ is a positive constant depending only on $h, \mathcal T,$
and $\|v_0\|_{H^2}+\|T_0\|_{H^1\cap L^\infty}+\|\nabla_HT_0\|_4$.
One still need to show the a priori $L^\infty(0,\mathcal T; L^q)$ estimate
on $\nabla_HT$, for $q\in(4,\infty)$. By Proposition \ref{propnt} and noticing that
\begin{align*}
  &\log(e+\|\Delta_H\eta\|_2+\|\Delta_H\theta\|_2+\|\nabla_HT\|_q)\\
\leq&\log[(e+\|\nabla_HT\|_q)(1+\|\Delta_H\eta\|_2+\|\Delta_H\theta\|_2)]\\
\leq&\log(e+\|\nabla_HT\|_q)+\log(1+\|\Delta_H\eta\|_2+\|\Delta_H\theta\|_2)\\
\leq&\log(e+\|\nabla_HT\|_q)+\|\Delta_H\eta\|_2+\|\Delta_H\theta\|_2,
\end{align*}
we have, for $q\in(4,\infty)$,
\begin{align*}
  &\frac{d}{dt}\left(\frac{2}{q}\|\nabla_HT\|_q^q +
\|\nabla_HT\|_2^2 \right)+\|\nabla_H\partial_zT\|_2^2
\nonumber\\
  \leq&C (\|\nabla_H(\eta,\theta)\|_2^2+ 1) (\|\nabla _HT\|_q^q+1)\log(e +\|\nabla_HT\|_q)\\
&+C(\|\nabla_H(\eta,\theta)\|_2^2+ 1)(\|\Delta_H\eta\|_2+\|\Delta_H\theta\|_{2})(\|\nabla _HT\|_q^q+1)\\
&+C(\|\Delta_H\eta\|_2^2+\|\nabla_H\eta\|_2^2+1)(\|\nabla _HT\|_q^q+1),
\end{align*}
from which, denoting
\begin{eqnarray*}
 & A_4=\frac{2}{q}\|\nabla_HT\|_q^q +
\|\nabla_HT\|_2^2,\quad B_4=\|\nabla_H\partial_zT\|_2^2,\\
&\ell_4(t)=(\|\nabla_H(\eta,\theta)\|_2^2+ 1)(\|\Delta_H\eta\|_2+\|\Delta_H\theta\|_{2}) +\|\Delta_H\eta\|_2^2+\|\nabla_H\eta\|_2^2+1,\\
&m_4(t)=\|\nabla_H(\eta,\theta)\|_2^2+ 1,
\end{eqnarray*}
one obtains
$$
A_4'+B_4\leq C(\ell_4(t)+m_4(t)\log A_4)A_4.
$$
Thanks to (\ref{apriALL-N2T'}) and applying Lemma \ref{LogGron1} to the above inequality, we have
$$
\sup_{0\leq t\leq\mathcal T}\|\nabla_HT\|_q(t)\leq C,
$$
where $C$ is a positive constant depending only on $h, \mathcal T,$
and $\|v_0\|_{H^2}+\|T_0\|_{H^1\cap L^\infty}+\|\nabla_HT_0\|_q$. Thus, this proves the case that $q\in(4,\infty)$.
\end{proof}
\subsection{A priori estimates on $\nabla^2T$} This subsection is devoted
to establishing the a priori estimates on the second order spatial
derivatives of $T$, which is stated in the following proposition:

\begin{proposition}
    \label{prop3.6}
Given a positive time $\mathcal T\in(0,\infty)$, let $\varepsilon_0\in(0,1)$
be the constant given in Proposition \ref{propapriALL-N2T}, and assume that $\varepsilon\in(0,\varepsilon_0)$. The following a priori estimate holds:
\begin{align*}
&\sup_{0\leq t\leq\mathcal T}\|\nabla^2T\|_2^2(t)+\int_0^{\mathcal T}
(\|\partial_z\nabla^2 T\|_2^2+\varepsilon\|\nabla_H\nabla^2T\|_2^2) dt\leq C
\end{align*}
for a positive constant $C$ depending only on $h, \mathcal T,$ and $\|(v_0,T_0)\|_{H^2}$; in particular, $C$ is independent of $\varepsilon\in(0,\varepsilon_0)$.
\end{proposition}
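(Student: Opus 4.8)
The plan is to run the standard $\dot H^2$ energy estimate for the temperature equation (\ref{eq3}), taking full advantage of the fact that all the a priori bounds obtained so far already provide $v\in L^\infty(0,\mathcal T;H^2(\Omega))$ (the second derivatives $\nabla_H^2v$, $\nabla_H\partial_zv$, $\partial_z^2v$ being recovered from $\nabla_H(\eta,\theta)$, $(\varphi,\psi)$, $\partial_zu$ via the elliptic relations and Propositions \ref{aprizu}, \ref{aprihu}, \ref{propapriALL-N2T}), together with $L^2_t$ control of the relevant third-order velocity quantities. First I would apply an arbitrary second-order derivative $\partial_i\partial_j$ ($i,j\in\{x,y,z\}$) to (\ref{eq3}), multiply by $\partial_i\partial_jT$, integrate over $\Omega$ and sum over $i,j$. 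Using the incompressibility relation $\nabla_H\cdot v+\partial_zw=0$ and integrating the transport terms by parts, the convective contributions $\int_\Omega(v\cdot\nabla_H+w\partial_z)(\partial_i\partial_jT)\,\partial_i\partial_jT$ cancel, while the dissipation reproduces exactly the left-hand side of the claim:
\[
\tfrac12\tfrac{d}{dt}\|\nabla^2T\|_2^2+\|\partial_z\nabla^2T\|_2^2 +\varepsilon\|\nabla_H\nabla^2T\|_2^2 = \mathcal R ,
\]
where $\mathcal R$ collects the commutators between $\partial_i\partial_j$ and the advection operator.

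The remainder $\mathcal R$ is a finite sum of integrals of two types: those carrying two derivatives on the velocity, schematically $\int_\Omega\nabla^2(v,w)\,\nabla_HT\,\nabla^2T$ and $\int_\Omega\nabla^2(v,w)\,\partial_zT\,\nabla^2T$, and those carrying one derivative on the velocity, $\int_\Omega\nabla(v,w)\,\nabla(\nabla T)\,\nabla^2T$. I would substitute for $w$ through (\ref{w}), recalling the pointwise identities $\partial_zw=-\nabla_H\cdot v$ and $\partial_z^2w=-(\varphi-T)$, so that every velocity factor is expressed through $v$ and the auxiliary functions. The uniform resources are $v\in L^\infty_tH^2$, $\nabla_HT\in L^\infty_tL^q$ and $T\in L^\infty_tL^\infty$ (Corollary \ref{apriH1}, Propositions \ref{aprizu}, \ref{aprihu}, \ref{propapriALL-N2T}), together with the $L^2_t$ bounds $\int_0^{\mathcal T}(\|\Delta_H(\eta,\theta)\|_2^2+\|\nabla_H\partial_zu\|_2^2+\|(\partial_z^2T,\nabla_H\partial_zT)\|_2^2)\,dt\le C$ from Proposition \ref{propapriALL-N2T}, which encode precisely the third-order derivatives of $v$ entering $\nabla^2(v,w)$.

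To estimate $\mathcal R$ I would use the anisotropic Ladyzhenskaya-type inequalities of Lemmas \ref{lad}--\ref{lem2.3}, distributing the vertical integration so as to generate factors controlled by the \emph{good} vertical dissipation $\|\partial_z\nabla^2T\|_2$. Here the symmetry (\ref{BC2}) is essential: since $T$ is odd in $z$, both $\nabla_H^2T$ and $\partial_z^2T$ vanish at $z=\pm h$, so $|\nabla_H^2T(x,y,z)|\le\int_{-h}^h|\partial_z\nabla_H^2T|\,dz$ (and similarly for $\partial_z^2T$), which lets me control even the weakly dissipated horizontal Hessian $\nabla_H^2T$ through the vertical dissipation rather than through the $\varepsilon$-dependent term. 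Applying the Hölder, Gagliardo--Nirenberg and Young inequalities, each term in $\mathcal R$ is bounded by $\sigma(\|\partial_z\nabla^2T\|_2^2+\varepsilon\|\nabla_H\nabla^2T\|_2^2)+g(t)(\|\nabla^2T\|_2^2+1)$ with $\sigma$ small and a coefficient $g\in L^1(0,\mathcal T)$ assembled from the bounds listed above. Absorbing the $\sigma$-terms yields $\frac{d}{dt}\|\nabla^2T\|_2^2+\|\partial_z\nabla^2T\|_2^2+\varepsilon\|\nabla_H\nabla^2T\|_2^2\le g(t)(\|\nabla^2T\|_2^2+1)$, after which Gronwall's inequality and $\|\nabla^2T_0\|_2\le C\|T_0\|_{H^2}$ give the assertion, uniformly in $\varepsilon\in(0,\varepsilon_0)$.

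The main obstacle will be the family of terms carrying the second velocity derivatives $\nabla^2v$ and $\nabla^2w$ paired with $\nabla_HT$ and $\nabla^2T$: here $\nabla^2v$ lies only in $L^\infty_tL^2$ and $\nabla_HT$ only in $L^q$, so these products cannot be closed by a direct Hölder estimate. The resolution is to trade one derivative onto the vertical dissipation via the boundary-vanishing identities above and to pay for $\nabla^2(v,w)$ with the $L^2_t$ third-order bounds $\|\Delta_H(\eta,\theta)\|_2$ and $\|\nabla_H\partial_zu\|_2$, never invoking a horizontal dissipation of $T$ of order exceeding $\varepsilon$; this is exactly what keeps $g$ integrable and renders the whole estimate uniform in $\varepsilon$.
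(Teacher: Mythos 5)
Your overall architecture matches the paper's: a second-order energy estimate for (\ref{eq3}) (the paper tests $\partial_z(\ref{eq3})$ with $-\partial_z^3T$ and $\nabla_H(\ref{eq3})$ with $-\nabla_H\Delta_HT$ rather than summing over all $\partial_i\partial_j$, but this is cosmetic), cancellation of the transport terms by incompressibility, the oddness of $T$ in $z$ to convert $\sup_z\|\nabla_H^2T\|_{2,M}^2$ into $\|\nabla_H^2T\|_2\|\partial_z\nabla_H^2T\|_2$, the anisotropic Lemmas \ref{ladlemma}--\ref{lem2.3} plus the $L^2_t$ bounds on $\Delta_H(\eta,\theta)$ and $\nabla_H\partial_zu$ for the commutator terms with two derivatives on the velocity, and a standard Gronwall at the end. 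All of that is sound and is essentially what the paper does in (\ref{3.50})--(\ref{3.52}).

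However, there is a genuine gap: you have misidentified the critical term. The terms $\int\nabla^2(v,w)\,\nabla T\,\nabla^2T$ that you single out as ``the main obstacle'' are exactly the ones your toolkit handles. The term that does \emph{not} close with H\"older, Gagliardo--Nirenberg, the anisotropic Ladyzhenskaya lemmas and the vertical-dissipation trade is $\int_\Omega|\nabla_Hv|\,|\nabla_H^2T|^2\,dxdydz$, coming from the commutator $[\nabla_H^2,v\cdot\nabla_H]T$ paired with $\nabla_H^2T$. After you use $|\nabla_H^2T|\le\int_{-h}^h|\partial_z\nabla_H^2T|\,dz$ on one factor, you are left with a product of $\nabla_Hv$ against the remaining factor of $\nabla_H^2T$ inside the horizontal integral; Lemmas \ref{lad}--\ref{lem2.3} then force you either to place $\nabla_H^2T$ in $L^4(M)$ (which costs $\nabla_H^3T$, available only with the $\varepsilon$-weight) or to place $\nabla_Hv$ in $L^\infty(M)$. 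The latter is not reachable from $v\in L^\infty_tH^2$ together with $\Delta_H(\eta,\theta)\in L^2_tL^2$: the obstruction is the temperature-dependent part of $\nabla_Hv$, since $\Delta_Hv=\nabla_H(\eta-\Phi)-\nabla_H^\perp\theta$ and bounding $\nabla_H^2\Phi$ requires $\nabla_H^2T$ itself --- the estimate becomes circular. The paper resolves this with the decomposition $v=\zeta-\varpi$ of (\ref{beta})--(\ref{zeta}) and Proposition \ref{propzetavarpi}: the ``temperature-independent'' part $\zeta$ is put in $L^1_zL^\infty_{xy}$ by the Br\'ezis--Gallouet--Wainger inequality (paying only $\log^{1/2}(e+\|\Delta_H(\eta,\theta)\|_2)$, which Young's inequality converts into an $L^1_t$ coefficient), while the temperature-dependent part $\varpi$ is controlled by the Beale--Kato--Majda-type inequality, using that $\nabla_H\cdot\varpi$ is bounded in $L^\infty$ (from $\|T\|_\infty\le\|T_0\|_\infty$) and $\nabla_HT\in L^\infty_tL^q$, $q>2$, giving $\sup_z\|\nabla_H\varpi\|_{\infty,M}\le C\log(e+\|\nabla_HT\|_q)\le C$. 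This logarithmic machinery is not a refinement of your H\"older/Gagliardo--Nirenberg scheme but a necessary additional idea (Riesz transforms of an $L^\infty$ function are not in $L^\infty$), and without it the estimate (\ref{3.55}) --- and hence the Gronwall closure --- cannot be obtained uniformly in $\varepsilon$.
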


\begin{proof}
By virtue of (\ref{eq2}), one can easily check that
$$
|\nabla_H\cdot v(x,y,z,t)|\leq\int_{-h}^h|\nabla_H\cdot u(x,y,\xi,t)|d\xi.
$$
By the aid of this inequality, differentiating equation (\ref{eq3}) with respect to $z$, multiplying the resulting equation by $-\partial_z^3T$, and integrating over $\Omega$, it follows from integration by parts and using the H\"older inequality that
\begin{align}
  &\frac{1}{2}\frac{d}{dt}\|\partial_z^2T\|_2^2+\|\partial_z ^3T\|_2^2+\varepsilon\|\nabla_H\partial_z^2T\|_2^2\nonumber\\
  =&\int_\Omega\partial_z(v\cdot\nabla_HT+w\partial_zT)\partial_z^3Tdxdydz\nonumber\\
  =&\int_\Omega(v\cdot\nabla_H\partial_zT+\partial_zv\cdot\nabla_HT-(\nabla_H\cdot v)\partial_zT+w\partial_z^2T)\partial_z^3Tdxdydz\nonumber\\
  =&\int_\Omega[(v\cdot\nabla_H\partial_zT+u\cdot\nabla_HT-(\nabla_H\cdot v)\partial_zT)\partial_z^3T+\frac{1}{2}(\nabla_H\cdot v)|\partial_z^2T|^2]dxdydz\nonumber\\
  =&\int_\Omega[(v\cdot\nabla_H\partial_zT+u\cdot\nabla_HT-(\nabla_H\cdot v)\partial_zT)\partial_z^3T-v\cdot\nabla_H\partial_z^2T\partial_z^2T]dxdydz\nonumber\\
  \leq&(\|v\|_\infty\|\nabla_H\partial_zT\|_2+\|u\|_4\|\nabla_HT\|_4)\|\partial_z^3T\|_2 +\|v\|_\infty \|\nabla_H\partial_z^2T\|_2\|\partial_z^2T\|_2\nonumber\\
  &+\int_M\left(\int_{-h}^h|\nabla_H\cdot  u|dz\right)\left(\int_{-h}^h|\partial_zT||\partial_z^3T|dz\right)dxdy.  \label{nazt2eps1}
\end{align}

By Lemma \ref{ladlemma} and Lemma \ref{lem2.3}, and recalling that $\|T\|_\infty\leq\|T_0\|_\infty$, guaranteed by Proposition \ref{prop3.2}, it follows from the H\"older inequality that
\begin{align}
  &\int_M\left(\int_{-h}^h|\nabla_H \cdot u|dz\right)\left(\int_{-h}^h|\partial_zT||\partial_z^3T|dz\right)dxdy\nonumber\\
\leq&\int_M\left(\int_{-h}^h(|\varphi|+|T|)dz\right)\left( \int_{-h}^h|\partial_zT||\partial_z^3T|dz\right)dxdy\nonumber\\
  \leq& \left(\int_{-h}^h\|(\varphi,T)\|_{4,M}dz\right)\left(\int_{-h}^h\|\partial_zT\|_{4,M}^2dz\right)^{\frac{1}{2}} \|\partial_z^3T\|_2\nonumber\\
  \leq&C\left(\|\varphi\|_2^{\frac{1}{2}}\|\nabla_H\varphi\|_2^{\frac{1}{2}}+ \|\varphi\|_2+1\right)\|\partial_zT\|_4\|\partial_z^3T\|_2.\label{add2}
\end{align}
Recalling (\ref{zt4-0}), i.e.,
\begin{equation}
  \label{zt4}\|\partial_zT\|_4^2\leq 3\|T\|_\infty\|\partial_z^2T\|_2.
\end{equation}
Similarly, we have
\begin{equation}
  \label{ht4}
\|\nabla_HT\|_4^2\leq 3\|T\|_\infty\|\Delta_HT\|_2.
\end{equation}

It follows from integration by parts and the H\"older and Cauchy-Schwarz inequalities that
\begin{align*}
\int_\Omega|\nabla_H\partial_zT|^2dxdydz=&\int_\Omega\Delta_HT\partial_z^2Tdxdydz\nonumber\\
  \leq&\|\Delta_H T\|_2\|\partial_z^2T\|_2\leq\frac{1}{2}(\|\Delta_HT\|_2^2 +\|\partial_z^2T\|_2^2)
\end{align*}
and, similarly, $\|\nabla_H\partial_z^2T\|_2^2\leq\frac{1}{2}(\|\partial_z^3T\|_2^2 +\|\Delta_H\partial_zT\|_2^2)$.
On account of these facts and recalling that $\|T\|_\infty\leq\|T_0\|_\infty$, guaranteed by Proposition \ref{prop3.2}, it follows from (\ref{nazt2eps1})--(\ref{ht4}) and using the Young inequality that
\begin{align*}
  &\frac{1}{2}\frac{d}{dt}\|\partial_z^2T\|_2^2+\|\partial_z ^3T\|_2^2+\varepsilon\|\nabla_H\partial_z^2T\|_2^2\nonumber\\
  \leq& (\|v\|_\infty\|\nabla_H\partial_zT\|_2 +\sqrt3\|u\|_4\|T\|_\infty^{\frac12}\|\Delta_HT\|_2^{\frac12}) \|\partial_z^3T\|_2 \nonumber\\
  &+\|v\|_\infty\|\nabla_H\partial_z^2T\|_2 \|\partial_z^2 T\|_2+C\left(\|\varphi\|_2^{\frac{1}{2}} \|\nabla_H\varphi\|_2^{\frac{1}{2}}+ \|\varphi\|_2+1\right) \|\partial_z^2T \|_2^{\frac{1}{2}}\|\partial_z^3T\|_2\nonumber\\
  \leq&\frac14(\|\partial_z^3T\|_2^2+\|\Delta_H\partial_zT\|_2^2) +C[\|\varphi\|_2^2  \|\nabla_H\varphi\|_2^2 + \|\varphi\|_2^4+1\nonumber\\
  &+( \|v\|_\infty^2+\|u\|_4^2)(\|\Delta_HT\|_2^2+\|\partial_z^2T\|_2^2 +1)]
\end{align*}
and, thus,
\begin{align}
  &\frac{1}{2}\frac{d}{dt}\|\partial_z^2T\|_2^2+\frac34\|\partial_z ^3T\|_2^2+\varepsilon\|\nabla_H\partial_z^2T\|_2^2\nonumber\\
  \leq&\frac14 \|\Delta_H\partial_zT\|_2^2 +C[\|\varphi\|_2^2  \|\nabla_H\varphi\|_2^2 + \|\varphi\|_2^4+1\nonumber\\
  &+( \|v\|_\infty^2+\|u\|_4^2)(\|\Delta_HT\|_2^2+\|\partial_z^2T\|_2^2 +1)].\label{nazt2eps}
\end{align}

Applying the horizontal gradient $\nabla_H$ to equation (\ref{eq3}), multiplying the resulting equation by $-\nabla_H\Delta_HT$, and integrating over $\Omega$, it follows from integrating by parts and the H\"older inequality that
\begin{align}
  &\frac{1}{2}\frac{d}{dt}\|\Delta_HT\|_2^2+\|\Delta_H
\partial_zT\|_2^2+\varepsilon\|\nabla_H\Delta_H T\|_2^2\nonumber\\
  =&-\int_\Omega\Delta_H(v\cdot\nabla_HT+w\partial_zT)\Delta_HTdxdydz\nonumber\\
  =&-\int_\Omega(2\nabla_Hv\cdot\nabla_H^2T
  +\Delta_Hv\cdot\nabla_HT +2\nabla_Hw\cdot\nabla_H\partial_zT+\Delta_Hw\partial_zT)\Delta_HTdxyddx\nonumber\\
  =&-\int_\Omega(2\nabla_Hv\cdot\nabla_H^2T+\Delta_Hv\cdot\nabla_HT)\Delta_HTdxdydz\nonumber\\
  &+2\int_\Omega(-\nabla_H(\nabla_H\cdot v)\cdot\nabla_HT\Delta_HT+\nabla_Hw\cdot\nabla_HT\Delta_H\partial_zT)dxdydz\nonumber\\
  &+\int_\Omega(-\Delta_H(\nabla_H\cdot v)T\Delta_HT+\Delta_HwT\Delta_H\partial_zT)dxdydz\nonumber\\
  \leq&2\int_\Omega|\nabla_Hv||\nabla_H^2T|^2dxdydz+3\int_\Omega|\nabla_H^2v||\nabla_HT||\Delta_HT|dx dydz\nonumber\\
  &+2\int_\Omega\left(\int_{-h}^z|\nabla_H(\nabla_H\cdot v)|d\xi\right)|\nabla_HT||\Delta_H\partial_zT|dxdydz\nonumber\\
  &+\|T\|_\infty(\|\Delta_H(\nabla_H\cdot v)\|_2\|\Delta_HT\|_2+\|\Delta_Hw\|_2\|\Delta_H\partial_zT\|_2).\label{nah2teps1}
\end{align}

Next, we are going to estimate the terms on the right-hand side of the above inequality.
Recalling that $T|_{z=-h}=0$, we have
$$
|\Delta_HT(x,y,z,t)|\leq\int_{-h}^h|\Delta_HT(x,y,\xi,t)|d\xi.
$$
Recalling the definitions of $\eta, \theta$ and $\Phi$, (\ref{etatheta})
and (\ref{Phi}), it follows from the two-dimensional horizontal elliptic
estimates, the Ladyzhenskaya and Poincar\'e inequalities that
\begin{align*}
  \|\nabla_H^2v\|_{4,M}^2\leq&C\|\Delta_Hv\|_{4,M}^2=C\|\nabla_H\nabla_H\cdot v-\nabla_H^\perp\nabla_H^\perp\cdot v\|_{4,M}^2\\
  \leq&C(\|\nabla_H(\nabla_H\cdot v)\|_{4,M}^2+\|\nabla_H(\nabla_H^\perp\cdot v)\|_{4,M}^2)\\
  \leq&C(\|\nabla_H\eta\|_{4,M}^2+\|\nabla_H\Phi\|_{4,M}^2+\|\nabla_H\theta\|_{4,M}^2)\\
  \leq&C(\|\nabla_H(\eta,\theta)\|_{2,M}\|\Delta_H(\eta,\theta)\|_{2,M} +\|\nabla_H\Phi\|_{4,M}^2).
\end{align*}
On account of the above inequality, applying Lemma \ref{ladlemma}, recalling that $\|T\|_\infty\leq\|T_0\|_\infty$, and using (\ref{ht4}), it follows from the H\"older and Young inequalities that
\begin{align}
  &3\int_\Omega|\nabla_H^2v||\nabla_HT||\Delta_HT|dxdydz\nonumber\\
  \leq&C\int_M\left(
  \int_{-h}^h|\nabla_H^2v||\nabla_HT|dz\right)\left(
  \int_{-h}^h|\Delta_H\partial_zT|dz\right)dxdy\nonumber\\
  \leq&C\left(\int_{-h}^h\|\nabla_HT\|_{4,M}^2dz\right)^{\frac{1}{2}} \left(\int_{-h}^h\|\nabla_H^2v\|_{4,M}^2dz\right)^{\frac{1}{2}} \left(\int_{-h}^h\|\Delta_H\partial_zT\|_{2,M}dz\right)\nonumber\\
  \leq&C\|\nabla_HT\|_4\bigg[\int_{-h}^h(\|\nabla_H(\eta,\theta)\|_{2,M} \|\Delta_H(\eta,\theta)\|_{2,M}+\|\nabla_H\Phi\|_{4,M}^2)dz\bigg]^{\frac{1}{2}}\|\Delta_H\partial _zT\|_2\nonumber\\
  \leq&C\|T\|_\infty^{\frac12}\|\Delta_HT\|_2^{\frac12} (\|\nabla_H(\eta,\theta)\|_2^{\frac{1}{2}} \|\Delta_H(\eta,\theta)\|_2^{\frac{1}{2}} +\|\nabla_HT\|_4)\|\Delta_H \partial_zT\|_2\nonumber\\
\leq&C\|T\|_\infty^{\frac12}\|\Delta_HT\|_2^{\frac12} (\|\nabla_H(\eta,\theta)\|_2^{\frac{1}{2}} \|\Delta_H(\eta,\theta)\|_2^{\frac{1}{2}} +\|T\|_\infty^{\frac12}\|\Delta_HT\|_2^{\frac12})\|\Delta_H \partial_zT\|_2\nonumber\\
  \leq&\frac{1}{16}\|\Delta_H\partial_zT\|_2^2 +C(\| \nabla_H(\eta,\theta)\|_2^2\|\Delta_H(\eta,\theta)\|_2^2 +\|\Delta_HT\|_2^2). \label{3.50}
\end{align}

By Lemma \ref{ladlemma} and Lemma \ref{lem2.3}, it follows from (\ref{ht4}) and the H\"older and Young inequalities that
\begin{align}
  &2\int_\Omega\left(\int_{-h}^z|\nabla_H(\nabla_H\cdot v)|d\xi\right)|\nabla_HT||\Delta_H\partial_zT|dxdydz\nonumber\\
  \leq&4\int_M\left(\int_{-h}^h(|\nabla_H \eta|+|\nabla_HT|)dz\right)
  \left( \int_{-h}^h|\nabla_HT||\Delta_H\partial_zT|dz\right)dxdy \nonumber\\
  \leq&4\left(\int_{-h}^h(\|\nabla_H\eta\|_{4,M}+\|\nabla_HT\|_{4,M})dz\right)
  \left(\int_{-h}^h\|\nabla_HT\|_{4,M} ^2dz\right)^{\frac{1}{2}}\|\Delta_H\partial_zT\|_2\nonumber\\
  \leq&C(\|\nabla_H\eta\|_2^{\frac{1}{2}}\|\Delta_H\eta\|_2^{\frac{1}{2}}
  +\|\nabla_HT\|_4)\|\nabla_H T\|_4
  \|\Delta_H\partial_zT\|_2\nonumber\\
  \leq&\frac{1}{16}\|\Delta_H\partial_zT\|_2^2 +C (\|\nabla_H T\|_4^4+\|\nabla_H\eta\|_2^2\|\Delta_H\eta\|_2^2)\nonumber\\
\leq&\frac{1}{16}\|\Delta_H\partial_zT\|_2^2 +C (\|\Delta_H T\|_2^2+\|\nabla_H\eta\|_2^2\|\Delta_H\eta\|_2^2).\label{3.51}
\end{align}

Recalling the definitions of $\eta$ and $\Phi$, (\ref{etatheta}) and (\ref{Phi}), respectively, and using the Young inequality, one has
\begin{align}
  &\|T\|_\infty(\|\Delta_H(\nabla_H\cdot v)\|_2\|\Delta_HT\|_2+\|\Delta_Hw\|_2\|\Delta_H\partial_zT\|_2)\nonumber\\
  \leq&C[(\|\Delta_H\eta\|_2+\|\Delta_HT\|_2)\|\Delta_HT\|_2+(\|\Delta_H\eta\|_2+\|\Delta_HT\|)\|\Delta _H\partial_zT\|_2]\nonumber\\
  \leq&\frac{1}{16}\|\Delta_H\partial_zT\|_2 +C (\|\Delta_H\eta\|_2^2+\|\Delta_HT\|_2^2).\label{3.52}
\end{align}

Recalling that $\Delta_HT|_{z=-h}=0$, we have
\begin{align*}
  &\sup_{-h\leq z\leq h}\|\Delta_HT(\cdot, z,t)\|_{2,M}^2
  =\sup_{-h\leq z\leq h}\int_M|\Delta_HT(x,y,z,t)|^2dxdy\nonumber\\
  =&2\sup_{-h\leq z\leq h}\int_{-h}^z\int_M\Delta_HT\Delta_H\partial_zTdxdyd\xi\leq 2\|\Delta_HT\|_2\|\Delta_H\partial_zT\|_2.
\end{align*}
Thanks to the above, recalling the decomposition of $v$, i.e. (\ref{zeta}), it follows from Proposition \ref{propzetavarpi} and the Young inequality that
\begin{align*}
&2\int_\Omega|\nabla_Hv||\nabla_H^2T|^2dxdydz\nonumber\\
  \leq&2\int_{-h}^h(\|\nabla_H\zeta\|_{\infty, M}+\|\nabla_H\varpi\|_{\infty, M})\|\nabla_H^2T\|_{2,M}^2dz\nonumber\\
  \leq& 2\left(\int_{-h}^h\|\nabla_H\zeta\|_{\infty,M}dz\right)\left(\sup_{-h\leq z\leq h}\|\Delta_HT(\cdot,z,t)\|_{2,M}^2\right) \\
  &+2 \left(\sup_{-h\leq z\leq h}\|\nabla_H\varpi(\cdot,z,t)\|_{\infty, M}\right)\|\Delta_HT\|_2^2\nonumber\\
  \leq&C\|\Delta_HT\|_2\|\Delta_H\partial_zT\|_2(\|\nabla_H(\eta,\theta)\|_2+1)\log^{\frac{1}{2}}(e+ \|\Delta_H(\eta, \theta)\|_2)\nonumber\\
  &+C\|\Delta_HT\|_2^2\log(e+\|\nabla_HT\|_q)\nonumber\\
  \leq&\frac{1}{16}\|\Delta_H\partial_zT\|_2^2 +C(1+\|\nabla_H(\eta,\theta)\|_2^2)\|\Delta_HT\|_2^2\nonumber\\
&\times\log
  (e+\|\Delta_H(\eta,\theta)\|_2+\|\nabla_HT\|_4),
\end{align*}
from which, noticing that $\log z\leq\log(1+z)\leq z$, for $z>0$, and using the Young inequality, one obtains
\begin{align}
\int_\Omega|\nabla_Hv||\nabla_H^2T|^2dxdydz
\leq&\frac{1}{16}\|\Delta_H\partial_zT\|_2^2 +C(1+\|\nabla_H(\eta,\theta)\|_2^2)\|\Delta_HT\|_2^2\nonumber\\
&\times
  (1+\|\Delta_H(\eta,\theta)\|_2^2+\|\nabla_HT\|_4).\label{3.55}
\end{align}

Substituting (\ref{3.50})--(\ref{3.55}) into (\ref{nah2teps1}) yields
\begin{align*}
&\frac12\frac{d}{dt}\|\Delta_HT\|_2^2+ \frac34\|\partial_z\Delta _HT\|_2^2+\varepsilon\|\nabla_H\partial_z^2T\|_2^2\\
  \leq&C(1+\|\nabla_H(\eta,\theta)\|_2^2)(1+\|\Delta_H(\eta,\theta)\|_2^2
+\|\nabla_HT\|_4)(\|\Delta_HT\|_2^2+1).
\end{align*}
Summing the above
with (\ref{nazt2eps}) yields
\begin{align*}
  &\frac{d}{dt}\|(\partial_z^2T,\Delta_HT)\|_2^2+\|(\partial_z ^3T,\partial_z\Delta _HT,\sqrt\varepsilon\nabla_H\partial_z^2 T,\sqrt
\varepsilon\nabla_H\Delta_HT )\|_2^2\nonumber\\
\leq&C[(1+\|\nabla_H(\eta,\theta)\|_2^2)(1+\|\Delta_H(\eta,\theta)\|_2
+\|\nabla_HT\|_4) +\|v\|_\infty^2+\|u\|_4^2]\nonumber\\
&\times(\|\Delta_HT\|_2^2+\|\partial_z^2T\|_2^2 +1)+C(\|\varphi\|_2^2\|\nabla_H\varphi\|_2^2+\|\varphi\|_2^4+1),
\end{align*}
from which, by Corollary \ref{apriH1}, Proposition \ref{aprihu}, Proposition \ref{propapriALL-N2T}, recalling (\ref{vL2Linfty}), and using the Gronwall inequality, one obtains
$$
\sup_{0\leq t\leq\mathcal T}\|(\partial_z^2T,\Delta_HT)\|_2^2(t)+\int_0^{\mathcal T}\|(\partial_z ^3T,\partial_z\Delta _HT,\sqrt\varepsilon\nabla_H\partial_z^2 T,\sqrt
\varepsilon\nabla_H\Delta_HT )\|_2^2dt\leq C
$$
for a positive constant $C$ depending only on $h, \mathcal T,$ and $\|(v_0,T_0)\|_{H^2}$. The conclusion follows from the above estimates
by the elliptic estimates.
\end{proof}

\subsection{Uniform a prior estimates} With the aid of the energy inequalities established in the previous subsections, we can obtain the uniform estimates, which are independent of the regularization parameter $\varepsilon$, stated in the following proposition.

\begin{proposition}
  \label{prop3.7}
Given a positive time $\mathcal T\in(0,\infty)$ and let $\varepsilon_0\in(0,1)$ be as  in Proposition \ref{propapriALL-N2T}.
Suppose that $(v_0, T_0)\in H^2(\Omega)$ and $\varepsilon\in(0,\varepsilon_0)$. Let $(v,T)$ be the unique global strong solution to system (\ref{eq1})--(\ref{eq3}), subject to the boundary and initial conditions (\ref{BC1})--(\ref{IC}), and $u, \eta$ and $\theta$ the functions defined by (\ref{etatheta}). Define two quantities $\mathcal Q_1$ and $\mathcal Q_2$ as follows
$$
\mathcal Q_1:=\|v_0\|_{H^2}^2+\|T_0\|_{H^1}^2+\|\nabla_HT_0\|_q^q+\|T_0\|_\infty^2,\quad \mathcal Q_2:=\|v_0\|_{H^2}^2+\|T_0\|_{H^2}^2,
$$
where $q\in(2,\infty)$.

Then, for any $\varepsilon\in(0,\varepsilon_0)$, we have the following a priori
estimate:
\begin{align*}
\sup_{0\leq t\leq\mathcal T}&(\|v\|_{H^2}^2(t)+\|T\|_{H^1}^2(t)+\|\nabla_HT\|_q^q(t)+\|p_s\|_{H^1(M)(t)}^2 +\|T\|_\infty^2(t))\\
&+\int_0^{\mathcal T} (\|\nabla_Hu\|_{H^1}^2+\|\partial_zT\|_{H^1}^2+\|\partial_tv\|_{H^1}^2 +\|\partial_tT\|_2^2\\
&+\|\eta\|_{H^2}^2 +\|\theta\|_{H^2}^2+\|\partial_t\eta\|_2^2+\|\partial_t\theta\|_2^2)dt\leq C_1,
\end{align*}
for a positive constant $C_1$ depending only on $h, \mathcal T,$ and the upper bound of $\mathcal Q_1$, but is independent of $\varepsilon\in(0,\varepsilon)$, and
\begin{align*}
  \sup_{0\leq t\leq\mathcal T}\|T\|_{H^2}^2(t)+\int_0^{\mathcal T}(\|\nabla_Hv\|_{H^2}^2+\|\partial_zT\|_{H^2}^2+\|\partial_tT\|_{H^1}^2) dt\leq C_2,
\end{align*}
for a positive constant $C_2$, depending only on $h, \mathcal T$ and the upper bound of $\mathcal Q_2$, but is independent of $\varepsilon$.
\end{proposition}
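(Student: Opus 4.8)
The plan is to treat Proposition \ref{prop3.7} as the assembly step that repackages the $\varepsilon$-independent bounds of Proposition \ref{prop3.2}, Corollary \ref{apriH1}, Proposition \ref{aprizu}, Proposition \ref{aprihu} and Proposition \ref{propapriALL-N2T} (and, for the second estimate, Proposition \ref{prop3.6}) into the stated regularity classes, rather than to derive any genuinely new differential inequality. The mechanism for this repackaging is a short list of exact algebraic identities among the auxiliary functions. From \eqref{etatheta}, \eqref{Phi} and \eqref{varphipsi}, and using $\partial_z\Phi=T$, one has the differentiation relations $\partial_z\eta=\nabla_H\cdot u+T=\varphi$ and $\partial_z\theta=\nabla_H^\perp\cdot u=\psi$, together with the horizontal reconstructions $\Delta_Hv=\nabla_H(\eta-\Phi)-\nabla_H^\perp\theta$ and $\Delta_Hu=\nabla_H(\varphi-T)-\nabla_H^\perp\psi$. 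For periodic fields the horizontal elliptic estimate is in fact an identity, $\|\nabla_Hg\|_2^2=\|\nabla_H\cdot g\|_2^2+\|\nabla_H^\perp\cdot g\|_2^2$, so these relations let me pass freely between horizontal curls/divergences and honest derivatives of $v$ and $u$ without ever invoking $\nabla_H^2T$ where it is not available.

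For the first estimate I would argue term by term. The $L^\infty_t$ control of $\|T\|_\infty$, $\|T\|_{H^1}$ and $\|\nabla_HT\|_q$ is quoted from Proposition \ref{prop3.2}(ii) and Proposition \ref{propapriALL-N2T}; the bound $v\in L^\infty_t(H^2)$ combines $\nabla_Hu\in L^\infty_t(L^2)$ (Proposition \ref{aprihu}) and $\partial_zu\in L^\infty_t(L^2)$ (Proposition \ref{aprizu}) with the identity for $\Delta_Hv$, whose right-hand side $\nabla_H(\eta,\theta)$, $\nabla_HT$ lies in $L^\infty_t(L^2)$ by Proposition \ref{propapriALL-N2T}. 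The interior integrals are equally direct: $\nabla_Hu\in L^2_t(H^1)$ follows from $\Delta_Hu=\nabla_H(\varphi-T)-\nabla_H^\perp\psi$ with $\nabla_H(\varphi,\psi)\in L^2_t(L^2)$ (Proposition \ref{aprihu}) and $\nabla_H\partial_zu\in L^2_t(L^2)$ (Proposition \ref{aprizu}); $\partial_zT\in L^2_t(H^1)$ is immediate from Proposition \ref{propapriALL-N2T}, while $\eta,\theta\in L^2_t(H^2)$ follows from its horizontal Laplacian bounds together with $\partial_z\eta=\varphi$, $\partial_z\theta=\psi$, which reduce the remaining second $z$-derivatives to $\partial_z\varphi=\nabla_H\cdot\partial_zu+\partial_zT$ and $\partial_z\psi=\nabla_H^\perp\cdot\partial_zu$. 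The time derivatives $\partial_tT\in L^2_t(L^2)$ and $\partial_t\eta,\partial_t\theta\in L^2_t(L^2)$ I would read off equations \eqref{eq3}, \eqref{etaeps} and \eqref{thetaeps} by transposing the spatial terms and inserting the bounds just obtained, the $\varepsilon$-terms being harmless since $\varepsilon<1$ and their $\sqrt\varepsilon$-weighted counterparts are bounded. Finally $p_s\in L^\infty_t(H^1(M))$ comes from the two-dimensional elliptic estimate for \eqref{ps}: its right-hand side is $\nabla_H\cdot F$ with $F\in L^\infty_t(L^2(M))$, the quadratic part being controlled by $v\in L^\infty_t(H^2)\hookrightarrow L^\infty_t(L^\infty)$ and the rest by $\nabla_HT\in L^\infty_t(L^2)$.

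For the second estimate I would first invoke Proposition \ref{prop3.6}, which under $T_0\in H^2$ yields $\nabla^2T\in L^\infty_t(L^2)$ and $\partial_z\nabla^2T\in L^2_t(L^2)$; combined with the first estimate this gives $T\in L^\infty_t(H^2)$ and $\partial_zT\in L^2_t(H^2)$. The genuinely new item is $\nabla_Hv\in L^2_t(H^2)$, i.e.\ the third-order derivatives of $v$ carrying at least one horizontal derivative. The mixed ones $\nabla_H^2\partial_zv=\nabla_H^2u$ and $\nabla_H\partial_z^2v=\nabla_H\partial_zu$ already lie in $L^2_t(L^2)$ by the first estimate and Proposition \ref{aprizu}; the purely horizontal $\nabla_H^3v$ I would get by differentiating $\Delta_Hv=\nabla_H(\eta-\Phi)-\nabla_H^\perp\theta$ once more, which bounds it by $\nabla_H^2(\eta,\theta)$ (in $L^2_t(L^2)$ via $\Delta_H(\eta,\theta)\in L^2_t(L^2)$) and by $\nabla_H^2\Phi$, hence by $\nabla_H^2T$. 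It is exactly here that $T_0\in H^2$, through $T\in L^\infty_t(H^2)$, is indispensable. The bound $\partial_tT\in L^2_t(H^1)$ then follows by differentiating \eqref{eq3} and using these $H^2$ estimates for $v$ and $T$.

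The step I expect to be the main obstacle is $\partial_tv\in L^2_t(H^1)$ in the first estimate, where $T_0\in H^2$ is \emph{not} assumed. Reading $\nabla\partial_tv$ off the momentum equation \eqref{eq1} would force $\nabla\Delta_Hv$, hence $\nabla_H^3v$, hence $\nabla_H^2T$ --- which, as Remark \ref{remark1.1} makes explicit, is genuinely out of reach under the $\mathcal Q_1$-hypotheses. The way around it is to abandon the momentum equation for this particular bound and reconstruct $\nabla\partial_tv$ from its horizontal divergence and curl instead: $\nabla_H\cdot\partial_tv=\partial_t\eta-\partial_t\Phi$ and $\nabla_H^\perp\cdot\partial_tv=\partial_t\theta$ both lie in $L^2_t(L^2)$ (using $\partial_t\eta,\partial_t\theta\in L^2_t(L^2)$, and $\partial_t\Phi$, being a vertical primitive of $\partial_tT$ up to its vertical average, lies in $L^2_t(L^2)$ since $\partial_tT\in L^2_t(L^2)$), so the exact horizontal identity gives $\nabla_H\partial_tv\in L^2_t(L^2)$, while $\partial_z\partial_tv=\partial_tu\in L^2_t(L^2)$ is obtained directly from \eqref{4.1u}. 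This structural detour is precisely the mechanism that absorbs the ``mismatching'' of regularities, and recognizing it is the crux; the remaining verifications are routine applications of Hölder's, the Ladyzhenskaya-type (Lemmas \ref{ladlemma} and \ref{lem2.3}) and Young's inequalities.
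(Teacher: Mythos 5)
Your proposal is correct and follows essentially the same route as the paper: collect the $\varepsilon$-independent bounds from Propositions \ref{prop3.2}--\ref{propapriALL-N2T} (and Proposition \ref{prop3.6} for the second estimate), convert between $(\eta,\theta,\varphi,\psi)$ and derivatives of $v,u$ via the horizontal div/curl elliptic identities, and read the time derivatives off the equations. In particular, your resolution of the $\partial_tv\in L^2_t(H^1)$ issue --- taking $\partial_z\partial_tv=\partial_tu$ from \eqref{4.1u} and recovering $\nabla_H\partial_tv$ from $\partial_t\eta,\partial_t\theta,\partial_tT$ rather than from the momentum equation --- is exactly the final step of the paper's proof.
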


\begin{proof}
By Proposition \ref{prop3.2}, Corollary \ref{apriH1}, Proposition \ref{aprizu}, Proposition \ref{aprihu}, and Proposition \ref{propapriALL-N2T}, we have the following
\begin{eqnarray}
&\|v\|_{L^\infty(0,\mathcal T; L^2)}+\|T\|_{L^\infty(0,\mathcal T; L^\infty)}+\|(\nabla_Hv,\partial_zT)\|_{L^2(0,\mathcal T; L^2)}\leq C, \label{APR1}\\
&\|(\eta,\theta)\|_{L^\infty(0,\mathcal T; L^2)}+\|u\|_{L^\infty(0,\mathcal T; L^4)}\nonumber\\
&+\|(\nabla_H\eta,\nabla_H\theta,\nabla_Hu,
\sqrt\varepsilon\partial_zu)\|_{L^2(0,\mathcal T; L^2)}\leq C,\label{APR2}\\
&\|\partial_zu\|_{L^\infty(0,\mathcal T; L^2)}+\|(\nabla_H\partial_zu,\sqrt\varepsilon
\partial_z^2u)\|_{L^2(0,\mathcal T; L^2)}\leq C,\label{APR3}\\
&\|(\varphi,\psi)\|_{L^\infty(0,\mathcal T; L^2)}+\|(\nabla_H\varphi,\nabla_H\psi)\|_{L^2(0,\mathcal T; L^2)}\leq C,\label{APR4}\\
&\|\nabla T\|_{L^\infty(0,\mathcal T; L^2)}+\|\nabla_HT\|_{L^\infty(0,
\mathcal T; L^q)}\nonumber\\
&+\|(\partial_z^2 T,\nabla_H\partial_zT,\sqrt\varepsilon\Delta_HT)\|_{L^2(0,\mathcal T; L^2)}\leq C,\label{APR5}\\
&\|(\nabla_H\eta,\nabla_H\theta)\|_{L^\infty(0,\mathcal T; L^2)}
+\|(\Delta_H\eta,\Delta_H\theta,\sqrt\varepsilon\partial_z^2\eta,
\sqrt\varepsilon\partial_z^2\theta)\|_{L^2(0,\mathcal T; L^2)}\leq C,\label{APR6}
\end{eqnarray}
where the constant $C$ depending only on $h, q, \mathcal T$, and $\mathcal Q_1$, but is independent of $\varepsilon$.

Thanks to (\ref{APR1})--(\ref{APR6}), and recalling the definitions of
$\eta, \theta, \varphi$, and $\psi$, it follows from the two-dimensional
horizontal elliptic estimate that
\begin{align}
  \|v\|_{L^\infty(0,\mathcal T; H^2)}\leq&C(\|v\|_{L^\infty(0,\mathcal T; L^2)}+\|\Delta_Hv\|_{L^\infty(0,\mathcal T; L^2)}+\|\partial_zu\|_{L^\infty(0,\mathcal T; L^2)})\nonumber\\
  \leq&C(1+\|\Delta_Hv\|_{L^\infty(0,\mathcal T; L^2)})\nonumber\\
  =&C(1+\|\nabla_H\nabla_H\cdot v-\nabla_H^\perp\nabla_H^\perp\cdot v\|_{L^\infty(0,\mathcal T; L^2)})\nonumber\\
  \leq&C(1+\|\nabla_H(\eta,\theta,T)\|_{L^\infty(0,\mathcal T; L^2)})\leq C,
  \label{APR7}\\
  \|\nabla_Hu\|^2_{L^2(0,\mathcal T; H^1)}=&\|\nabla_Hu\|_{L^2(0,\mathcal T; L^2)}^2+\|\nabla_H\partial_zu\|_{L^2(0,\mathcal T; L^2)}^2+\|\nabla_H^2u\|_{L^2(0,\mathcal T; L^2)}^2\nonumber\\
  =&\|\nabla_Hu\|_{L^2(0,\mathcal T; L^2)}^2+\|\nabla_H\partial_zu\|_{L^2(0,\mathcal T; L^2)}^2+\|\Delta_Hu\|_{L^2(0,\mathcal T; L^2)}^2\nonumber\\
  =&\|\nabla_H(u, \partial_zu)\|_{L^2(0,\mathcal T; L^2)}^2+
  \|\nabla_H\nabla_H\cdot u-\nabla_H^\perp\nabla_H^\perp\cdot u\|_{L^2(0,\mathcal T; L^2)}^2\nonumber\\
  \leq&\|\nabla_H(u, \partial_zu)\|_{L^2(0,\mathcal T; L^2)}^2
  +C\|\nabla_H(\varphi,\psi,T)\|_{L^2(0,\mathcal T; L^2)}^2\leq C,\label{APR8}
\end{align}
and
\begin{align}
  \|(\eta,\theta)\|_{L^2(0,\mathcal T; H^2)}\leq&C(\|(\eta,\theta)\|_{L^2(0,
  \mathcal T; L^2)}+\|\Delta_H(\eta,\theta)\|_{L^2(0,\mathcal T; L^2)}+ \|\partial_z^2(\eta,\theta)\|_{L^2(0,\mathcal T; L^2)})\nonumber\\
  \leq&C(1+\|\partial _z\nabla_Hu\|_{L^2(0,\mathcal T; L^2)}+\|\partial_zT
  \|_{L^2(0,\mathcal T; L^2)})\leq C,\label{APR9}
\end{align}
where the constant $C$ depending only on $h, q, \mathcal T$, and $\mathcal Q_1$, but is independent of $\varepsilon$.

With the aid of (\ref{w}), (\ref{etatheta}), (\ref{Phi}), (\ref{APR1}), and (\ref{APR2}), it follows from the Sobolev inequality, two-dimensional horizontal elliptic estimates, and the H\"older inequality that
\begin{align}
\|w\|_\infty=&\left\|\int_{_h}^z(\nabla_H\cdot v)d\xi\right\|_\infty\leq\int_{-h}^h\|\nabla_H\cdot v\|_{\infty,M}dz\nonumber\\
  \leq&\int_{-h}^h(\|\eta\|_{\infty,M}+\|\Phi\|_{\infty,M})dz\leq C\int_{-h}^h(\|\eta\|_{H^2(M)}+\|\Phi\|_{\infty,M})dz\nonumber\\
  \leq&C\int_{-h}^h(\|\eta\|_{2,M}+\|\Delta_H\eta\|_{2,M}+\|\Phi\|_{\infty, M})dz\nonumber\\
  \leq&C(\|\eta\|_{2 }+\|\Delta_H\eta\|_{2 }+\|\Phi\|_{\infty })\leq C(1+\|\Delta_H\eta\|_2),
  \label{3.63}
\end{align}where the constant $C$ depending only on $h, q, \mathcal T$, and $\mathcal Q_1$, but is independent of $\varepsilon$.

Thanks to (\ref{APR5}), (\ref{APR7}), and (\ref{3.63}), it follows from equation (\ref{eq3}) and using the Sobolev embedding inequality that
\begin{align*}
  \|\partial_tT\|_2^2\leq&(\|v\|_\infty^2\|\nabla_HT\|_2^2+\|w\|_\infty^2\|\partial_zT\|_2^2+\| \partial_z^2T\|_2^2+\varepsilon^2\|\Delta_HT\|_2^2)\nonumber\\
  \leq&C[\|v\|_{H^2}^2\|\nabla_HT\|_2^2+(1+\|\Delta_H\eta\|_2^2)\|\partial_zT\|_2^2 +\|\partial_z^2T\|_2^2+\varepsilon^2\|\Delta_HT\|_2^2]\nonumber\\
  \leq&C(1+\|\Delta_H\eta\|_2^2+\|\partial_z^2T\|_2^2+\varepsilon^2 \|\Delta_HT\|_2^2)
\end{align*}
and, thus, recalling (\ref{APR5}) and (\ref{APR9}), we have
\begin{equation}
  \label{patt}
  \|\partial_tT\|_{L^2(0,\mathcal T; L^2)}\leq C,
\end{equation}
where the constant $C$ depending only on $h, q, \mathcal T$, and $\mathcal Q_1$, but is independent of $\varepsilon$.
Recalling that $p_s(x,y,t)$ satisfies (see Appendix A (\ref{apxa3}))
\begin{equation*}
\left\{
\begin{array}{l}
-\Delta_Hp_s=\frac{1}{2h}\nabla_H\cdot\int_{-h}^h
\left(\nabla_H\cdot(v\otimes v)+f_0\overrightarrow{k}\times v-\int_{-h}^z\nabla_HTd\xi\right)dz,\\
\int_Mp_s(x,y,t)dxdy=0,\quad p_s\mbox{ is periodic in }x,y.
\end{array}
\right.
\end{equation*}
By elliptic estimates, Poincar\'e inequality, and recalling (\ref{APR5}) and (\ref{APR7}), we have
\begin{align}
  &\|p_s\|_{H^1(M)}^2=(\|p_s\|_{2,M}^2+\|\nabla_Hp_s\|_{2,M}^2)\nonumber\\
  \leq& C\|\nabla_Hp_s\|_{2,M}^2\leq C(\|\nabla_HT\|_2^2+\|\nabla_H\cdot(v\otimes v)\|_2^2+\|v\|_2^2) \nonumber\\
  \leq&C(\|\nabla_HT\|_2^2+\|v\|_\infty^2\|\nabla_Hv\|_2^2+\|v\|_2^2) \leq C(1+\|v\|_{H^2}^2)\leq C,\label{hps}
\end{align}
where the constant $C$ depending only on $h, q, \mathcal T$, and
$\mathcal Q_1$, but is independent of $\varepsilon$. Therefore,
recalling (\ref{APR5}), (\ref{APR7}), (\ref{3.63}), it follows
from (\ref{eq1}) and the Sobolev inequality that
\begin{align*}
  \|\partial_tv\|_2^2\leq&C(\|v\|_\infty^2\|\nabla_Hv\|_2^2+\|w\|_\infty^2\|\partial_zv\|_2^2+\| \Delta_Hv\|_2^2+\varepsilon^2\|\partial_z^2v\|_2^2\nonumber\\
  &+\|v\|_2^2+\|\nabla_Hp_s\|_2^2 +\|\nabla_HT\|_2^2)\nonumber\\
  \leq&C(\|v\|_{H^2}^2+1+\|\Delta_H\eta\|_2)
  \leq  C(1+\|\Delta_H\eta\|_2^2), \label{patv}
\end{align*}
which, recalling (\ref{APR9}), gives
\begin{equation}
  \|\partial_tv\|_{L^2(0,\mathcal T; L^2)}\leq C, \label{patv}
\end{equation}
where the constant $C$ depending only on $h, q, \mathcal T$,
and $\mathcal Q_1$, but is independent of $\varepsilon$.

For simplifying the notations, we introduce $S$ and $R$ as follows
\begin{eqnarray*}
  S=(v\cdot\nabla_H)v+w\partial_zv+f_0\overrightarrow{k}\times v, \quad R=\int_{-h}^z(\nabla_H\cdot(vT)-\varepsilon\Delta_HT)d\xi.
\end{eqnarray*}
By the H\"older and Sobolev embedding inequalities, we have
\begin{align*}
  &\int_\Omega|\nabla_H w|^2|\partial_zv|^2dxdydz\nonumber\\
  \leq& C\int_M\left(\int_{-h}^h (|\nabla_H\eta|+|\nabla_HT|)dz\right)^2\left( \int_{-h}^h|u|^2dz\right)dxdy\nonumber\\
  \leq& C\left\|\int_{-h}^h|u|^2dz\right\|_{\infty,M}(\|\nabla_H\eta\|_2^2+ \|\nabla_HT\|_2^2)\leq C\int_{-h}^h\|u\|_{\infty,M}^2dz\nonumber\\
\leq& C\int_{-h}^h(\|u\|_{2,M}^2+\|\Delta_Hu\|_{2,M}^2)dz\leq C\|(u,\Delta_Hu)\|_2^2\nonumber\\
\leq& C(1+\|\nabla_Hu\|_{H^1}^2).
\end{align*}
Thanks to the above and recalling (\ref{APR5}), (\ref{APR7}), and (\ref{3.63}), it follows from the Sobolev inequality that
\begin{align}
  \|\nabla_HS\|_2^2\leq&\int_\Omega(|v|^2|\nabla_H^2v|^2 +|\nabla_Hv|^4+|w|^2|\nabla_H\partial_zv|^2 \nonumber\\
  &+|\nabla_Hw|^2|\partial_zv|^2+f_0^2|\nabla_Hv|^2)dxdydz\nonumber\\
  \leq&C(\|v\|_\infty^2\|\nabla_H^2v\|_2^2+\|\nabla_Hv\|_4^4 +\|w\|_\infty^2\|\nabla_H\partial_zv\|_2^2 +1+\|\nabla_Hu\|_{H^1}^2)\nonumber\\
  \leq&C(\|v\|_{H^2}^4+\|\Delta_H\eta\|_2^2 +1+\|\nabla_Hu\|_{H^1}^2)\nonumber\\
  \leq&C(1+\|\Delta_H\eta\|_2^2+\|\nabla_Hu\|_{H^1}^2) \label{ns}
\end{align}
and
\begin{align}
\|R\|_2^2\leq & C(\|\nabla_H\cdot(vT)\|_2^2+\varepsilon^2\|\Delta_HT\|_2^2)\nonumber\\
  \leq&C(\|T\|_\infty^2\|\nabla_Hv\|_2^2+\|v\|_\infty^2\|\nabla_HT\|_2^2 +\varepsilon^2\|\Delta_HT\|_2^2 )\nonumber\\
  \leq&C(1+\|v\|_{H^2}^2 +\varepsilon^2\|\Delta_HT\|_2^2 )
  \leq C(1+\|\sqrt\varepsilon\Delta_HT\|_2^2) , \label{r}
\end{align}
where the constant $C$ depending only on $h, q, \mathcal T$, and $\mathcal Q_1$, but is independent of $\varepsilon$. Recalling the expression of $f(x,y,t)$ in (\ref{f}), one can easily check that
$$
f=\frac{1}{2h}\int_{-h}^h(\nabla_H\cdot S+R+wT)dz,
$$
and thus
\begin{align}
  \label{ef}\|f\|_2^2\leq& C(\|\nabla_HS\|_2^2+\|R\|_2^2+\|\nabla_Hv\|_2^2)
  \nonumber\\
  \leq& C(1+\|(\Delta_H\eta,\sqrt\varepsilon\Delta_HT)\|_2^2+\|\nabla_Hu\|_{H^1}^2),
\end{align}
where the constant $C$ depending only on $h, q, \mathcal T$, and $\mathcal Q_1$, but is independent of $\varepsilon$.

Thanks to (\ref{APR3}), (\ref{APR5}), (\ref{APR7}), and (\ref{3.63}), it follows from equation  (\ref{4.1u}) and the Sobolev and H\"older inequalities that
\begin{align}
  \|\partial_t u\|_2^2\leq&C(\|v\|_\infty^2\|\nabla_Hu\|_2^2+\|w\|_\infty^2\|\partial_zu\|_2^2 +\|\Delta_Hu\|_2^2 +\varepsilon^2\|\partial_z^2u\|_2^2\nonumber\\
  &+\|u\|_2^2+\|u\|_4^2\|\nabla_Hv\|_4^2+\|\nabla_HT\|_2^2)\nonumber\\
  \leq&C[\|v\|_{H^2}^4+(1+\|\Delta_H\eta\|_2^2)\|v\|_{H^2}^2+\|\Delta_Hu\|_2^2+\varepsilon^2\| \partial_z^2u\|_2^2+1]\nonumber\\
  \leq&C(1+\|\Delta_H\eta\|_2^2+\|\nabla_Hu\|_{H^1}^2+ \|\sqrt\varepsilon\partial_z^2u\|_2^2), \label{patu}
\end{align}
where the constant $C$ depending only on $h, q, \mathcal T$,
and $\mathcal Q_1$, but is independent of $\varepsilon$.
Using (\ref{ns})--(\ref{ef}), and recalling (\ref{APR5}) and (\ref{APR7}),
it follows from equations (\ref{etaeps})--(\ref{thetaeps}) and the Sobolev
and H\"older inequalities that
\begin{align}
  \|\partial_t\theta\|_2^2\leq & \|\Delta_H\theta\|_2^2+\varepsilon^2\|\partial_z^2\theta\|_2^2+\| \nabla_HS\|_2^2\nonumber\\
  \leq &C( \|(\Delta_H\theta,\sqrt\varepsilon\partial_z^2\theta)\|_2^2
  +1+\|\Delta_H\eta\|_2^2+\|\nabla_Hu\|_{H^1}^2) \label{patheta}
\end{align}
and
\begin{align}
  \|\partial_t\eta\|_2^2\leq&\|\Delta_H\eta\|_2^2+\varepsilon^2\|\partial_z^2\eta\|_2^2
  +\|\nabla_HS\|_2^2+(1-\varepsilon)^2\|\partial_zT\|_2^2+\|wT\|_2^2+\|R\|_2^2+\|f\|_2^2\nonumber\\
  \leq &C( \|(\Delta_H\eta,\sqrt\varepsilon\partial_z^2\eta,\sqrt\varepsilon\Delta_HT)
  \|_2^2
  +1+\|\nabla_Hu\|_{H^1}^2)\label{pateta}
\end{align}
for a positive constant $C$ depending only on $h, q, \mathcal T$, and $\mathcal Q_1$, but is independent of $\varepsilon$.

Thanks to (\ref{APR3}), (\ref{APR5}), (\ref{APR6}), (\ref{APR8}), (\ref{patt}), (\ref{patv}), and using the elliptic estimates, it follows from (\ref{patu})--(\ref{pateta}) that
\begin{align*}
\|\partial_tT\|_{L^2(0,\mathcal T; L^2)}^2& +\|\partial_tv\|_{L^2(0,\mathcal T; H^1)}^2=\|(\partial_tT,\partial_tv,\partial_t u,\partial_t\nabla_Hv)\|_{L^2(0,\mathcal T; L^2)}^2\nonumber\\
  \leq&\|(\partial_tT,\partial_tv,\partial_t u)\|_{L^2(0,\mathcal T; L^2)}^2+C(\|\nabla_H\cdot\partial_tv\|_2^2+\|\nabla_H^\perp\cdot\partial_tv\|_2^2)\nonumber\\
  \leq&\|(\partial_tT,\partial_tv,\partial_t u)\|_{L^2(0,\mathcal T; L^2)}+C\|(\partial_t\eta,\partial_tT,\partial_t\theta)\|_{L^2(0,\mathcal T; L^2)}^2\leq C
\end{align*}
for a positive constant $C$ depending only on $h, q, \mathcal T$, and $\mathcal Q_1$, but is independent of $\varepsilon$.
The first conclusion follows from the above inequality,
(\ref{APR1}), (\ref{APR5}), (\ref{APR7})--(\ref{APR9}), and (\ref{hps}).

We now prove the second conclusion.
By Proposition \ref{prop3.6}, one has
\begin{align}
&\sup_{0\leq t\leq\mathcal T}\|\nabla^2T\|_2^2(t)+\int_0^{\mathcal T}
(\|\partial_z\nabla^2 T\|_2^2+\varepsilon\|\nabla_H\nabla^2T\|_2^2) dt\leq C \label{estsec3}
\end{align}
for a positive constant $C$ depending only on $h, \mathcal T,$ and $\|(v_0,T_0)\|_{H^2}$, but is independent of $\varepsilon$.
Recalling the expressions of $\eta$ and $\theta$, it follows from the elliptic estimates that
\begin{equation}
  \|\nabla_Hv\|_{H^2}^2\leq C(\|\nabla_H\cdot v\|_{H^2}^2+\|\nabla_H^\perp\cdot v\|_{H^2}^2)
  \leq C(\|\theta\|_{H^2}^2+\|\eta\|_{H^2}^2+\|T\|_{H^2}^2) \label{JIA}
\end{equation}
for a positive constant $C$ depending only on $h$, but is independent of $\varepsilon$.
Recalling (\ref{3.63}) and the first conclusion, it follows from equation (\ref{eq3}), the estimate (\ref{estsec3}), and the H\"older and Sobolev inequalities that
\begin{align}
  \|\nabla\partial_tT\|_2^2\leq&\int_\Omega(|v|^2|\nabla_H^2T|^2+|\nabla_Hv|^2||\nabla_HT|^2+|w|^2| |\nabla_H\partial_zT|^2\nonumber\\
  &+|\nabla_Hw|^2|\partial_zT|^2+|\nabla\partial_z^2T|^2+\varepsilon^2|\nabla\Delta_HT|^2) dxdydz\nonumber\\
  \leq&C(\|v\|_\infty^2\|\nabla_H^2T\|_2^2+\|\nabla_Hv\|_4^2\|\nabla_HT\|_4^2+\|w\|_\infty^2\| \nabla_H\partial_zT\|_2^2\nonumber\\
  &+\|\nabla_Hw\|_4^2\|\partial_zT\|_4^2+\|\nabla\partial_z^2T\|_2^2+\varepsilon^2\|\nabla\Delta_H T\|_2^2)\nonumber\\
  \leq&C[\|v\|_{H^2}^2\|T\|_{H^2}^2+(1+\|\Delta_H\eta\|_2^2)\|T\|_{H^2}^2\nonumber\\
  &+\|\nabla_Hv\|_{H^2}^2\|T\|_{H^2}^2+\|\partial_zT\|_{H^2}^2+\varepsilon^2\|\nabla_H\Delta_HT \|_2^2]\nonumber\\
  \leq&C(1+\|\Delta_H\eta\|_2^2+\|\nabla_Hv\|_{H^2}^2+\|\partial_zT\|_{H^2}^2 +\varepsilon\|\nabla_H\Delta_HT\|_2^2)  \label{patnabt}
\end{align}
for a positive constant $C$ depending only on $h, \mathcal T,$ and $\|(v_0,T_0)\|_{H^2}$, but is independent of $\varepsilon$.
Combining this inequality with (\ref{estsec3})--(\ref{JIA}), as well as the first conclusion, yields the second conclusion.
\end{proof}
\section{Proof of Theorem \ref{thm1}}
\label{sec4}
In this section, we consider the system with only horizontal viscosities
and only vertical diffusivity, i.e., system (\ref{MAIN1})--(\ref{MAIN3}), subject to the boundary and initial conditions (\ref{BC1})--(\ref{IC}), and establish the global well-posedness
of strong solutions. In other words, we prove our main result, Theorem \ref{thm1}.

\begin{proof}[Proof of Theorem \ref{thm1}]
\textbf{Global existence. } As in Proposition \ref{prop3.7}, we set
$$
\mathcal Q_1=\|v_0\|_{H^2}^2+\|T_0\|_{H^1}^2+\|\nabla_HT_0\|_q^q+\|T_0\|_\infty^2,
$$
with $q\in(2,\infty)$. Thanks to the regularities and spatial symmetries
of $v_0$ and $T_0$, one can choose periodic functions $v_{0\varepsilon}$
and $T_{0\varepsilon}$, which are even and odd in $z$, respectively, such
that $v_{0\varepsilon}\in H^2(\Omega)$, $T_{0\varepsilon}\in H^2(\Omega)$,
$$
\int_{-h}^h\nabla_H\cdot v_{0\varepsilon}(x,y,z)dz=0, \quad\|T_{0\varepsilon}\|_\infty\leq\|T_0\|_\infty,
$$
and
$$
v_{0\varepsilon}\rightarrow v_0\mbox{ in }H^2(\Omega),\quad T_{0\varepsilon}\rightarrow T_0\mbox{ in }H^1(\Omega),\quad\nabla_HT_{0\varepsilon}\rightarrow\nabla_HT_0\mbox{ in }L^q(\Omega).
$$
Note that such $v_{0\varepsilon}$ and $T_{0\varepsilon}$ can be chosen as the standard mollification of $v_0$ and $T_0$, respectively. Set
$$
\mathcal Q_{1\varepsilon}=\|v_{0\varepsilon}\|_{H^2}^2+\|T_{0\varepsilon}\|_{H^1}^2 +\|\nabla_HT_{0\varepsilon}\|_q^q+\|T_{0\varepsilon}\|_\infty^2
$$
then $\mathcal Q_{1\varepsilon}\leq 2\mathcal Q_1$, for sufficiently small $\varepsilon$. By Proposition \ref{prop3.1}, there is a unique global strong
solution $(v_\varepsilon, T_\varepsilon)$ to system (\ref{eq1})--(\ref{eq3}),
subject to the boundary conditions (\ref{BC1})--(\ref{BC2}) and the
initial condition
$$
(v_\varepsilon, T_\varepsilon)|_{t=0}=(v_{0\varepsilon}, T_{0\varepsilon}).
$$

By Proposition \ref{prop3.7}, the following uniform estimate
\begin{align}
\sup_{0\leq t\leq \mathcal T}&(\|v_\varepsilon\|_{H^2}^2+\|T_\varepsilon\|_{H^1}^2+\|\nabla_H T_\varepsilon\|_q^q+\|\nabla_Hp_{\varepsilon}\|_2^2+\|T_\varepsilon\|_\infty^2)\nonumber\\
  &+\int_0^{\mathcal T}(\|\nabla_Hu_\varepsilon\|_{H^1}^2+\|\theta_\varepsilon\|_{H^2}^2+ \|\eta_\varepsilon\|_{H^2}^2+\|\partial_zT_\varepsilon\|_{H^1}^2 +\|\partial_t\eta_\varepsilon\|_2^2\nonumber\\
  &+\|\partial_t\theta_\varepsilon\|_2^2+\|\partial_tv_\varepsilon\|_{H^1}^2 +\|\partial_tT_\varepsilon\|_{H^1}^2)\leq C,\label{apri1}
\end{align}
for a positive constant $C$ depending only on $h, \mathcal T,$ and
$\mathcal Q_1$ and, thus, is independent of $\varepsilon$,
here $u_\varepsilon, \eta_\varepsilon,
\theta_\varepsilon$ are the associated functions defined by
(\ref{etatheta}) and $p_\varepsilon=p_\varepsilon(x,y,t)$ is
the associated pressure function determined by (\ref{ps}).

On account of the above a priori estimates, by the Aubin-Lions lemma, i.e. Lemma \ref{AL}, there is a subsequence, still denoted by $(v_\varepsilon, T_\varepsilon)$, and $(v,T)$, such that
\begin{eqnarray*}
  &&v_\varepsilon\rightarrow v\mbox{ in }C([0,\mathcal T];H^1(\Omega)),\quad T_\varepsilon\rightarrow T \mbox{ in }C([0,\mathcal T];L^2(\Omega)),\\
  &&v_\varepsilon\overset{*}{\rightharpoonup}v\mbox{ in }L^\infty(0,\mathcal T; H^2(\Omega)),\quad\partial_t v_\varepsilon\rightharpoonup\partial_tv\mbox{ in }L^2(0,\mathcal T; H^1(\Omega)),\\
  &&T_\varepsilon\overset{*}{\rightharpoonup}T\mbox{ in }L^\infty(0,\mathcal T; H^1(\Omega)),\quad\partial_tT_\varepsilon\rightharpoonup\partial_tT\mbox{ in }L^2(0,\mathcal T;L^2(\Omega)),\\
  &&\nabla_Hu_\varepsilon\rightharpoonup\nabla_Hu\mbox{ in }L^2(0,\mathcal T;H^1(\Omega)),\quad \partial_zT_\varepsilon\rightharpoonup\partial_zT\mbox{ in }L^2(0,\mathcal T;H^1(\Omega)),\\
  &&\nabla_HT_\varepsilon\overset{*}{\rightharpoonup}\nabla_HT\mbox{ in }L^\infty(0,\mathcal T;L^q(\Omega)),\quad p_\varepsilon\rightharpoonup p_s\mbox{ in }L^2(0,\mathcal T;H^1(M)),\\
  &&\theta_\varepsilon\rightharpoonup\theta\mbox{ in }L^2(0,\mathcal T;H^2(\Omega)),\quad\partial_t\theta_\varepsilon\rightharpoonup
  \partial_t\theta \mbox{ in }L^2(0,\mathcal T;L^2(\Omega)),\\
  &&\eta_\varepsilon\rightharpoonup\eta\mbox{ in }L^2(0,\mathcal T;H^2(\Omega)),\quad\partial_t\eta_\varepsilon\rightharpoonup
  \partial_t\eta \mbox{ in }L^2(0,\mathcal T;L^2(\Omega)),
\end{eqnarray*}
where $\rightharpoonup$ and $\overset{*}{\rightharpoonup}$ denote the weak and weak-* convergences, respectively. Due to these convergences, one can take the limit $\varepsilon\rightarrow0$ in systems (\ref{eq1})--(\ref{eq3}) and (\ref{4.1u})--(\ref{thetaeps}),
to show that $(v,T)$ satisfies system (\ref{MAIN1})--(\ref{MAIN3}), and $(u,\eta,\theta)$, defined by (\ref{etatheta}), satisfies
\begin{align}
\partial_tu+(v\cdot\nabla_H)u&+w\partial_zu-\Delta_Hu+f_0k\times u\nonumber\\
&+(u\cdot\nabla_H)v-(\nabla_H\cdot v)u-\nabla_HT=0,\label{uu}\\
\partial_t\eta-\Delta_H\eta=&-\nabla_H\cdot[(v\cdot\nabla_H)v  +w\partial_zv+f_0k\times v]\nonumber\\
  &+\partial_zT-wT-\int_{-h}^z\nabla_H\cdot(vT)d\xi+f(x,y,t),\label{eta}\\
\partial_t\theta-\Delta_H\theta=&-\nabla_H^\perp\cdot[(v\cdot\nabla_H)v  +w\partial_zv+f_0\overrightarrow{k}\times v],\label{theta}
\end{align}
in the sense of distribution, where the function $f=f(x,y,t)$ is now given by
\begin{align}
  f=&\frac{1}{2h}\int_{-h}^h\left(\int_{-h}^z\nabla_H\cdot(vT)d\xi+wT+\nabla_H\cdot \big(\nabla_H\cdot(v\otimes v)+f_0\overrightarrow{k}\times v\big)\right)\label{ff}
   dz.
\end{align}
Moreover, by the weakly lower semi-continuity of the norms and recalling
that $(v_\varepsilon, T_\varepsilon)$ satisfies the a priori estimate (\ref{apri1}), we can see that $(v,T)$ still satisfies the same a priori
estimate as (\ref{apri1}). This implies the regularity properties stated
in Definition \ref{def1.1} and, as a result, systems
(\ref{MAIN1})--(\ref{MAIN3}) and (\ref{uu})--(\ref{theta}) are satisfied
a.e.\,in $\Omega\times(0,\mathcal T)$. Furthermore, recalling the first
line of the previous convergences, one can easily show that $(v,T)$
satisfies the initial condition (\ref{IC}) and, therefore, $(v,T)$ is a
strong solution to system (\ref{MAIN1})--(\ref{MAIN3}), subject to the
boundary and initial conditions (\ref{BC1})--(\ref{IC}).

Now, if we assume, in addition, that $T_0\in H^2(\Omega)$, then the
mollification $T_{0\varepsilon}$ converges strongly to $T_0$ in
$H^2(\Omega)$. As a result, the quantity
$\mathcal Q_{2\varepsilon}:=\|v_{0\varepsilon}\|_{H^2}^2+\|T_{0\varepsilon}\|_{H^2}^2$
is bounded by $2\mathcal Q_2=2(\|v_{0}\|_{H^2}^2+\|T_{0}\|_{H^2}^2)$
for small $\varepsilon$. By Proposition \ref{prop3.7}, for small
$\varepsilon$,
we have the following uniform estimate
\begin{align*}
  \sup_{0\leq t\leq\mathcal T}\|T_\varepsilon\|_{H^2}^2(t)+\int_0^{\mathcal T}(\|\nabla_Hv_\varepsilon\|_{H^2}^2+\|\partial_zT_\varepsilon\|_{H^2}^2 +\|\partial_tT_\varepsilon\|_{H^1}^2) dt\leq C
\end{align*}
for a positive constant $C$ depending only on $h, \mathcal T,$ and
$\mathcal Q_2$, but is independent of $\varepsilon$. This a priori estimate,
by the weakly lower semi-continuity of the norms and the Aubin-Lions lemma,
implies the additional regularities as stated in the theorem. This completes
the proof of the existence part of the theorem.

\textbf{Continuous dependence on the initial data. }Let $(v_1,T_1)$ and
$(v_2,T_2)$ be two strong solutions to the same system with initial data
$(v_{01}, T_{01})$ and $(v_{02}, T_{02})$, respectively. Denote
$v=v_1-v_2$,
$w=w_1-w_2$, and $T=T_1-T_2$. Then, $(v,T)$ satisfies
\begin{align}
  &\partial_t v+(v_1\cdot\nabla_H)v+w_1\partial_zv-\Delta_Hv+f_0\overrightarrow{k}\times v+\nabla_Hp_s(x,y,t)\nonumber\\
  &\qquad =\int_{-h}^z\nabla_H T(x,y,\xi,t)d\xi-(v\cdot\nabla_H)v_2-w\partial_zv_2,\label{dv}\\
  &\partial_tT+v_1\cdot\nabla_HT+w_1\partial_zT-\partial_z^2T=-v\cdot\nabla_HT_2-w\partial_zT_2, \label{dt}
\end{align}
with initial data $(v_0,T_0)=(v_{01}-v_{02}, T_{01}-T_{02})$. Recalling the
regularities of strong solutions stated in Definition \ref{def1.1}, it is
clear that all terms the above equations are well defined pointwisely.

Note that
$$
|\nabla_Hv_2(x,y,z,t)|\leq\frac{1}{2h}\int_{-h}^h|\nabla_Hv_2(x,y,z,t)|dz+\int_{-h}^h|\nabla_H \partial_zv_2(x,y,z,t)|dz.
$$
Recalling the regularities of $(v,T)$, multiplying equation (\ref{dv}) by $v$ and integrating the resultant over $\Omega$, it follows from integration by parts, the H\"older inequality, and Lemma \ref{lad} that
\begin{align*}
  &\frac{1}{2}\frac{d}{dt}\|v\|_2^2+\|\nabla_Hv\|_2^2\\
  =&\int_\Omega\left[\nabla_H\left(\int_{-h}^zTd\xi\right)-(v\cdot\nabla_H)v_2-w\partial_zv_2\right] \cdot v dxdydz\\
  =&-\int_\Omega\left[\left(\int_{-h}^zTd\xi\right)(\nabla_H\cdot v)+((v\cdot\nabla_H)v_2+w\partial_zv_2)\cdot v\right]dxdydz\\
  \leq&C\|T\|_2\|\nabla_Hv\|_2+C\int_\Omega\left[|v|^2\left(\int_{-h}^h (|\nabla_H\partial_zv_2|+|\nabla_Hv_2|) dz\right)\right.\\
  &\left.+\left(\int_{-h}^h|\nabla_H\cdot v|dz\right)|\partial_zv_2||v|\right]dxdydz\\
  \leq&C\|T\|_2\|\nabla_Hv\|_2+C\int_M\left(\int_{-h}^h|v|^2dz\right)
  \left(\int_{-h}^h(|\nabla_Hv_2|+|\nabla_H\partial_z v_2|)dz\right)dxdy\\
  &+C\int_M\left(\int_{-h}^h|\nabla_Hv|dz\right)\left(
  \int_{-h}^h|\partial_zv_2||v|dz\right)dxdy\\
  \leq&C\|T\|_2\|\nabla_Hv\|_2+C\|v\|_2(\|v\|_2+\|\nabla_Hv\|_2)(\|\nabla_Hv_2\|_2+\|\nabla_H\partial_z v_2\|_2)\\
  &+C\|\nabla_Hv\|_2\|\partial_zv_2\|_2^{\frac{1}{2}}(\|\partial_zv_2\|_2^{\frac{1}{2}}+\|\nabla_H \partial_zv_2\|_2^{\frac{1}{2}})\|v\|_2^{\frac{1}{2}}(\|v\|_2^{\frac{1}{2}}+\|\nabla_Hv\|_2^{ \frac{1}{2}})\\
  \leq&\frac{1}{2}\|\nabla_Hv\|_2^2+C[\|T\|_2^2+(1+\|\nabla_Hv_2\|_2^2+\|\nabla_H\partial_zv_2\|_2^2) \|v\|_2^2\\
  &+\|\partial_zv_2\|_2^2(\|\partial_zv_2\|_2^2+\|\nabla_H\partial_zv_2\|_2^2)\|v\|_2^2]\\
  \leq&\frac{1}{2}\|\nabla_Hv\|_2^2+C(1+\|\nabla v_2\|_2^2)^2(1+\|\nabla_H\partial_zv_2\|_2^2)(\|T\|_2^2+\|v\|_2^2)
\end{align*}
and, thus,
\begin{equation}
  \frac{d}{dt}\|v\|_2^2+\|\nabla_Hv\|_2^2\leq C(1+\|\nabla v_2\|_2^2)^2(1+\|\nabla_H\partial_zv_2\|_2^2)(\|T\|_2^2+\|v\|_2^2)
  \label{uniq1}
\end{equation}
for $t\in(0,\mathcal T)$.

Recalling the regularities of $(v,T)$, multiplying equation (\ref{MAIN3}) by $T$, integrating the resulting
equation over $\Omega$, and noticing that
$\|T_2\|_\infty\leq\|T_{02}\|_\infty$, it follows from integration by
parts, (\ref{ineqlad}), and the Ladyzhenskaya inequality that
\begin{equation}\label{ADD0}
\frac{1}{2}\frac{d}{dt}\|T\|_2^2 +\|\partial_zT\|_2^2
  =-\int_\Omega(v\cdot\nabla_H T_2+w\partial_zT_2)Tdxdydz
\end{equation}
Noticing that $\|T_2\|_\infty\leq\|T_{02}\|_\infty$, it follows from integration by parts and the Young inequalities that
\begin{align}
 &\int_\Omega w\partial_zT_2Tdxdydz
=-\int_\Omega T_2(\partial_zwT+w\partial_zT) dxdydz\nonumber\\
=&\int_\Omega((\nabla_H\cdot v)T_2T-w\partial_zTT_2)dxdydz
\leq \|T_2\|_\infty(\|\nabla_Hv\|_2\|T\|_2+ \|\partial_zT\|_2\|w\|_2) \nonumber\\
\leq& C\|T_2\|_\infty\|\nabla_Hv\|_2(\|T\|_2+\|\partial_zT\|_2)
\leq \frac14\|\partial_zT\|_2^2+C(\|\nabla_Hv\|_2^2+\|T\|_2^2). \label{ADD1}
\end{align}
Note that $T|_{z=-h}=0$, we have $|T|\leq\int_{-h}^h|\partial_zT|dz.$ With
the aid of this, by the H\"older, Minkowski, Gagaliardo-Nirenberg, and Young inequalities, we deduce
\begin{align}
-\int_\Omega v\cdot&\nabla_HT_2Tdxdydz\leq\int_M\left(\int_{-h}^h |v||\nabla_Hv_2| dz\right)\left(\int_{-h}^h|\partial_zT|dz\right)dxdy\nonumber\\
\leq&\int_M\left(\int_{-h}^h|v|^2dz\right)^{\frac12}\left(\int_{-h}^h |\nabla_HT_2|^2dz\right)^{\frac12}\left( \int_{-h}^h|\partial_zT|dz\right)dxdy\nonumber \\
\leq&\left\|\left(\int_{-h}^h|v|^2dz\right)^{\frac12} \right\|_{\frac{2q}{q-2},M}
\left\|\left(\int_{-h}^h|\nabla_HT_2|^2dz\right)^{\frac12}\right\|_{q,M} \left\|\int_{-h}^h|\partial_zT|dz\right\|_{2,M}\nonumber\\
\leq&\left(\int_{-h}^h\|v\|_{\frac{2q}{q-2},M}^2dz\right)^{\frac12} \left(\int_{-h}^h\|\nabla_HT_2\|_{q,M}^2dz\right)^{\frac12} \left(\int_{-h}^h\|\partial_zT\|_{2,M}dz\right) \nonumber\\
\leq& C\left[\int_{-h}^h\left(\|v\|_{2,M}^2+\|v\|_{2,M}^{\frac{2}{q}(q-2)} \|\nabla_Hv\|_{2,M}^{\frac4q}\right)dz\right]^{\frac12}\|\nabla_HT_2\|_q \|\partial_zT\|_2\nonumber\\
\leq&C\left(\|v\|_2+\|v\|_2^{\frac{q-2}{q}}\|\nabla_Hv\|_2^{\frac2q} \right)\|\nabla_HT_2\|_q \|\partial_zT\|_2\nonumber\\
\leq&\frac14(\|\partial_zT\|_2^2+\|\nabla_Hv\|_2^2)+C\left(\|\nabla_HT_2\|_q^2+ \|\nabla_HT_2\|_q^{\frac{2q}{q-2}}\right)\|v\|_2^2.\label{ADD2}
\end{align}
Substituting (\ref{ADD1}) and (\ref{ADD2}) into (\ref{ADD0}) yields
\begin{equation}
  \frac{d}{dt}\|T\|_2^2+\|\partial_zT\|_2^2\leq C\|\nabla_Hv\|_2^2+C\left(1+\|\nabla_HT_2\|_q^2+ \|\nabla_HT_2\|_q^{\frac{2q}{q-2}}\right)(\|v\|_2^2+\|T\|_2^2), \label{uniq2}
\end{equation}
for $t\in(0,\mathcal T)$.

Multiplying (\ref{uniq1}) by a sufficiently large positive constant $A$, and summing the resulting inequality with (\ref{uniq2}) up yiedls
\begin{align*}
  &\frac{d}{dt}(A\|v\|_2^2+\|T\|_2^2)+\frac{1}{2}(A\|\nabla_Hv\|_2^2+\|T\|_2^2)\\
  \leq&C\left[(1+\|\nabla v_2\|_2^2)^2(1+\|\nabla_H\partial_zv_2\|_2^2)+ \left(\|\nabla_HT_2\|_q^2+ \|\nabla_HT_2\|_q^{\frac{2q}{q-2}}\right)\right](\|T\|_2^2+\|v\|_2^2),
\end{align*}
from which, by the Gronwall inequality, one obtains
\begin{align*}
  &\sup_{0\leq s\leq t}(\|v\|_2^2(s)+\|T\|_2^2(s))+\int_0^t(\|\nabla_Hv\|_2^2+\|\partial_zT\|_2^2)ds\\
  \leq&Ce^{\int_0^t\left[(1+\|\nabla v_2\|_2^2)^2(1+\|\nabla_H\partial_zv_2\|_2^2)+ \left(\|\nabla_HT_2\|_q^2+ \|\nabla_HT_2\|_q^{\frac{2q}{q-2}}\right)\right]ds} (\|v_0\|_2^2+\|T_0\|_2^2)
\end{align*}
for any $t\in(0,\mathcal T)$. This proves the continuous dependence of the strong solutions on the initial data, in particular the uniqueness. This completes the proof of the theorem.
\end{proof}

\section{Appendix A: Equations for $\eta$ and $\theta$}
\label{appendixa}
In this appendix, we present the details of the derivation of the equations for $\eta$ and $\theta$, where $\eta$ and $\theta$ are the same functions as defined by (\ref{etatheta}), i.e.,
\begin{eqnarray*}
  &&\eta=\nabla_H\cdot v+\int_{-h}^zT(x,y,\xi,t)d\xi-\frac{1}{2h} \int_{-h}^h\left(\int_{-h}^zT(x,y,\xi,t)d\xi\right) dz,\\
  &&\theta=\nabla_H^\perp\cdot v,\qquad \nabla_H^\perp=(-\partial_y, \partial_x),
\end{eqnarray*}
with $(v,T)$ being a strong solution to system (\ref{eq1})--(\ref{eq3}), subject to the boundary and initial conditions (\ref{BC1})--(\ref{IC}).

Applying the operator $\nabla_H^\perp\cdot$ to equation (\ref{eq1}), and noticing that $\nabla_H^\perp\cdot\nabla_Hp_s=0$, one obtains
\begin{equation}
  \label{apxa1}
  \partial_t\theta-\Delta_H\theta-\varepsilon\partial_z^2\theta=-\nabla_H^\perp\cdot[(v\cdot\nabla_H)v+w\partial_zv+f_0k\times v],
\end{equation}
obtaining the equation for $\theta$.

Applying the operator $\nabla_H\cdot$, i.e., $\text{div}_H$, to equation (\ref{eq1}), one gets
\begin{align}
\partial_t(\nabla_H\cdot v)-\Delta_H&\bigg(\nabla_H\cdot v+\int_{-h}^zT(x,y, \xi, t)d\xi-p_s(x,y,t)\bigg)-\varepsilon\partial_z^2(\nabla_H\cdot v)\nonumber\\
 =&-\nabla_H\cdot [(v\cdot\nabla_H)v+w\partial_zv+f_0k\times v].\label{apxa2}
\end{align}
Integrating the above equation with respect to $z$ over the interval $(-h, h)$, and noticing
\begin{equation*}
  \int_{-h}^h[(v\cdot\nabla_H)v+w\partial_zv]dz=\int_{-h}^h[(v\cdot\nabla_H)v+(\nabla_H\cdot v)v]dz=\int_{-h}^h\nabla_H\cdot(v\otimes v)dz,
\end{equation*}
and (recalling $\int_{-h}^h\nabla_H\cdot v dz=0$)
$$
\int_{-h}^h[\partial_t(\nabla_H\cdot v)-\Delta_H(\nabla_H\cdot v)-\partial_z^2(\nabla_H\cdot v)]dz=0,
$$
we obtain
\begin{equation}
-\Delta_Hp_s=\frac{1}{2h}\nabla_H\cdot\int_{-h}^h\left(\nabla_H\cdot(v\otimes v)+f_0k\times v-\int_{-h}^z\nabla_HTd\xi\right)dz. \label{apxa3}
\end{equation}
Substituting (\ref{apxa3}) into (\ref{apxa2}), one has
\begin{align*}
  &\partial_t(\nabla_H\cdot v)-\Delta_H\bigg(\nabla_H\cdot v+\int_{-h}^zTd\xi-\frac{1}{2h}\int_{-h}^h\int_{-h}^zTd\xi dz\bigg)-\varepsilon\partial_z^2(\nabla_H\cdot v)\nonumber\\
  =&-\nabla_H\cdot [(v\cdot\nabla_H)v+w\partial_zv+f_0k\times v]+\frac{1}{2h}\int_{-h}^h\nabla_H\cdot(\nabla_H\cdot(v\otimes v)+f_0k\times v)dz,
\end{align*}
from which, recalling the definition of $\eta$, one arrives at
\begin{eqnarray}
  &\partial_t\eta-\Delta_H\eta-\varepsilon\partial_z^2\eta
  = -\nabla_H\cdot[(v\cdot\nabla_H)v+w\partial_zv+f_0k\times v]-\varepsilon\partial_zT \nonumber\\
  &+\frac{1}{2h}\int_{-h}^h\nabla_H\cdot(\nabla_H\cdot(v\otimes v)+f_0k\times v)+\partial_t\left(\int_{-h}^zTd\xi-\frac{1}{2h}\int_{-h}^h\int_{-h}^zTd\xi dz\right).\label{apxa4}
\end{eqnarray}
We compute the last term on the right-hand side of (\ref{apxa4}) as follows. On account of equation (\ref{eq3}), we have
\begin{align*}
  \int_{-h}^z\partial_t Td\xi=&-\int_{-h}^z(\nabla_H\cdot (v T)+\partial_z(wT)-\varepsilon\Delta_HT-\partial_z^2T)d\xi\\
=&-\int_{-h}^z(\nabla_H\cdot(vT)-\varepsilon\Delta_HT)d\xi-wT+\partial_zT+(wT-\partial_zT)|_{z=-h},
\end{align*}
and thus
\begin{align*}
  &\int_{-h}^z\partial_tTd\xi-\frac{1}{2h}\int_{-h}^h\int_{-h}^z\partial_t Td\xi dz\\
  =&-\int_{-h}^z(\nabla_H\cdot (vT)-\varepsilon\Delta_HT)d\xi-wT+\partial_zT\\
  &+\frac{1}{2h}\int_{-h}^h\left(\int_{-h}^z(\nabla_H\cdot(vT) -\varepsilon\Delta_HT)d\xi\right) dz+\frac{1}{2h}\int_{-h}^hwTdz.
\end{align*}
Substituting the above equality into (\ref{apxa4}) yields
\begin{align}
  \partial_t\eta-\Delta_H\eta-\varepsilon\partial_z^2\eta=&-\nabla_H\cdot[(v\cdot\nabla_H)v  +w\partial_zv+f_0k\times v]+(1-\varepsilon)\partial_zT-wT\nonumber\\
  &-\int_{-h}^z(\nabla_H\cdot(vT)-\varepsilon\Delta_HT)d\xi+f(x,y,t),\label{apxa5}
\end{align}
with function $f=f(x,y,t)$ given by
\begin{align*}
  f=&\frac{1}{2h}\int_{-h}^h\left(\int_{-h}^z(\nabla_H\cdot(vT)-\varepsilon\Delta_HT)d\xi+wT\right)
   dz\\
   &+\frac{1}{2h}\int_{-h}^h \nabla_H\cdot \big(\nabla_H\cdot(v\otimes v)+f_0k\times v\big)dz.
\end{align*}

\section*{Acknowledgments}
{J.L.\ and E.S.T.\ would like to thank the ICERM, Brown University, for the warm and kind hospitality where this work was completed. Part of this work was done when J.L.\ was a postdoctoral fellow at the Weizmann Institute of Science. The work of J.L.\ was supported in part by the Direct Grant for Research 2016/2017 (Project Code: 4053216) from The Chinese University of Hong Kong. The work of E.S.T.\ was supported in part by the ONR grant N00014-15-1-2333 and the NSF grants DMS-1109640 and DMS-1109645.}
\par


\begin{thebibliography}{50}
\bibitem{AZGU}
Az\'erad,~P.; Guill\'en,~F.: \emph{Mathematical justification of the hydrostatic approximation in the primitive equations of geophysical fluid dynamics}, SIAM J. Math. Anal., \bf33 \rm(2001), 847--859.

\bibitem{BLNNT}
Bardos,~C.; Lopes Filho,~M.~C.; Niu,~Dongjuan; Nussenzveig Lopes,~H.~J.; Titi,~E.~S.: \emph{Stability of two-dimensional viscous incompressible flows under three-dimensional perturbations and inviscid symmetry breaking}, SIAM J. Math. Anal., \bf45~\rm(2013), 1871--1885.

\bibitem{BKM}
Beale,~J.~T., Kato,~T., Majda,~A.: \emph{Remarks on the breakdown of smooth solutions for the 3-D Euler equations}, Commun. Math. Phys., \bf94~\rm(1984), 61--66.

\bibitem{BGMR03}
Bresch,~D., Guill\'en-Gonz\'alez,~F., Masmoudi,~N., Rodr\'iguez-Bellido,~M.~A.: \emph{On the
uniqueness of weak solutions of the two-dimensional primitive equations}, Differential Integral Equations, \bf16~\rm(2003), 77--94.

\bibitem{BW}
Brezis,~H., Wainger,~S.: \emph{A note on limiting cases of Sobolev embeddings},
Comm. Partial Differential Equations, \bf5~\rm(1980), 773--789.

\bibitem{BG}
Brezis,~H., Gallouet,~T.: \emph{Nonlinear Schr\"odinger evolution equations}, Nonlinear
Anal., \bf4~\rm(1980), 677--681.


\bibitem{BKL04}
Bresch,~D., Kazhikhov, A., Lemoine,~J.: \emph{On the two-dimensional hydrostatic Navier-Stokes equations}, SIAM J.~Math. Anal., \bf36~\rm(2004), 796--814.

\bibitem{CINT}
Cao,~C., Ibrahim,~S., Nakanishi,~K., Titi,~E.~S.: \emph{Finite-time blowup for the inviscid primitive equations of oceanic and atmospheric dynamics}, Comm. Math. Phys.,  \bf337~\rm(2015), 473--482.

\bibitem{CAOFARHATTITI}
Cao,~C., Farhat, A., Titi, E.~S.: \emph{Global well-posedness of an inviscid three-dimensional pseudo-Hasegawa-Mima model,} Comm. Math. Phys., \bf319~\rm(2013),  195--229.

\bibitem{CAOLITITI1}
Cao,~C., Li, J., Titi, E.~S.: \emph{Local and global well-posedness of strong solutions to the 3D primitive equations with vertical eddy diffusivity,} Arch. Rational Mech. Anal., \bf214~\rm(2014), 35--76.

\bibitem{CAOLITITI2}
Cao,~C., Li, J., Titi, E.~S.: \emph{Global well-posedness of strong solutions to the 3D primitive equations with horizontal eddy diffusivity}, J. Differential Equations, \bf257~\rm(2014), 4108--4132.

\bibitem{CAOLITITI3}
Cao,~C., Li, J., Titi, E.~S.: \emph{Global well-posedness of the 3D primitive equations with only horizontal viscosity and diffusivity}, Comm. Pure Appl. Math., \bf69~\rm(2016), 1492--1531.

\bibitem{CAOLITITIH1}
Cao,~C., Li, J., Titi, E.~S.: \emph{Strong solutions to the 3D primitive equations with only horizontal dissipation: Near $H^1$ initial data}, J. Funct. Anal. (2017), http://dx.doi.org/10.1016/j.jfa.2017.01.018.


\bibitem{CAOWU}
Cao,~C., Wu,~Jia.: \emph{Global regularity for the two-dimensional anisotropic Boussinesq equations with vertical dissipation}, Arch. Ration. Mech. Anal., \bf208~\rm(2013), 985--1004.

\bibitem{CAOTITI1}
Cao,~C., Titi,~E.~S.: \emph{Global well-posedness and finite-dimensional global attractor for a 3-D planetary geostrophic viscous model}, Comm. Pure Appl. Math., \bf56~\rm(2003), 198--233.

\bibitem{CAOTITI2}
Cao,~C., Titi,~E.~S.: \emph{Global well-posedness of the three-dimensional viscous primitive equations of large scale ocean and atmosphere dynamics}, Ann. of Math., \bf166~\rm(2007), 245--267.

\bibitem{CAOTITI3}
Cao,~C., Titi,~E.~S.: \emph{Global well-posedness of the 3D primitive equations with partial vertical turbulence mixing heat diffusivity}, Comm. Math. Phys., \bf310~\rm(2012), 537--568.

\bibitem{CONFOINSBOOK}
Constantin,~P.; Foias,~C.: \emph{Navier-Stokes equations}, Chicago Lectures in Mathematics, University of Chicago Press, Chicago, IL, 1988.


\bibitem{DANCHINPAICU}
Danchin,~R., Paicu,~M.: \emph{Global existence results for the anisotropic Boussinesq system
in dimension two,} Math. Models Methods Appl. Sci., \bf21~\rm(2011), 421--457.

\bibitem{GMR01}
Guill\'en-Gonz\'alez,~F., Masmoudi,~N., Rodr\'iguez-Bellido,~M.~A.: \emph{Anisotropic estimates and strong solutions of the primitive equations,} Differ. Integral Equ., \bf14~\rm(2001), 1381--1408.

%

\bibitem{HIEHUSKAS}
Hieber,~M., Hussein,~A., Kashiwabara,~T.: \emph{Global strong $L^p$ well-posedness of the 3D primitive equations with heat and salinity diffusion.}, J. Differential Equations, \bf261~\rm(2016),
6950--6981.

\bibitem{HIEKAS}
Hieber,~M., Kashiwabara,~T: \emph{Global strong well-posedness of the three dimensional primitive equations in $L^p$-spaces.}, Arch. Ration. Mech. Anal., \bf221~\rm(2016), 1077--1115.

\bibitem{KOB06}
Kobelkov,~G.~M.: \emph{Existence of a solution in the large for the 3D large-scale ocean dynamics equaitons,} C. R. Math. Acad. Sci. Paris, \bf343~\rm(2006), 283--286.

\bibitem{KPRZ}
Kukavica,~I.; Pei,~Y.; Rusin,~W.; Ziane,~M.: \emph{Primitive equations with continuous initial data}, Nonlinearity, \bf27~\rm(2014), 1135--1155.


\bibitem{KZ07B}
Kukavica,~I., Ziane,~M.: \emph{On the regularity of the primitive equations of the ocean,} Nonlinearity, \bf20~\rm(2007), 2739--2753.

\bibitem{LADYZHENSKAYA}
Ladyzhenskaya,~O.~A.: \emph{The mathematical theory of viscous incompressible flow}, Second English edition, revised and enlarged. Translated from the Russian by Richard A. Silverman and John Chu. Mathematics and its Applications, Vol. 2 Gordon and Breach, Science Publishers, New York-London-Paris 1969.

\bibitem{LEWAN}
Lewandowski~R.: \emph{Analyse Math\'ematique et Oc\'eanographie}, Masson,
Paris, 1997.

\bibitem{LITITIBOU}
Li,~J.; Titi,~E.~S.: \emph{Global well-posedness of the 2D Boussinesq equations with vertical dissipation}, Arch. Ration. Mech. Anal., \bf220~\rm(2016), 983--1001.

\bibitem{LITITITROPMOIS}
Li,~J.; Titi,~E.~S.: \emph{A tropical atmosphere model with moisture: global well-posedness and relaxation limit}, Nonlinearity, \bf29~\rm(2016),  2674--2714.

\bibitem{LITITIUNIQ}
Li,~J.; Titi,~E.~S.: \emph{Existence and uniqueness of weak solutions to viscous primitive equations for a certain class of discontinuous initial data.}, SIAM J. Math. Anal., \bf49~\rm(2017), 1--28.

\bibitem{LITITIHYDRO}
Li,~J.; Titi,~E.~S.: \emph{Small aspect ratio limit from Navier-Stokes equations to primitive equations: mathematical justification of hydrostatic approximation}, preprint.

\bibitem{LTW92A}
Lions,~J.~L., Temam,~R., Wang,~S.: \emph{New formulations of the primitive equations of the atmosphere and appliations,} Nonlinearity, \bf5~\rm(1992), 237--288.

\bibitem{LTW92B}
Lions,~J.~L., Temam,~R., Wang,~S.: \emph{On the equations of the large-scale ocean}, Nonlinearity, \bf5~\rm(1992), 1007--1053.

\bibitem{LTW95}
Lions,~J.~L., Temam,~R., Wang,~S.: \emph{Mathematical study of the coupled models of atmosphere and ocean (CAO III)}, J. Math. Pures Appl., \bf74~\rm(1995), 105--163.

\bibitem{MAJDA}
Majda,~A.: \emph{Introduction to PDEs and Waves for the Atmosphere and Ocean,}
New York University, Courant Institute of Mathematical Sciences, New York; American Mathematical Society, Providence, RI, 2003.

\bibitem{PED}
Pedlosky,~J.: \emph{Geophysical Fluid Dynamics, 2nd edition}, Springer, New York, 1987.

\bibitem{PTZ09}
Petcu,~M., Temam,~R., Ziane,~M.: \emph{Some mathematical problems in geophysical fluid dynamics,}
Elsevier: Handbook of Numarical Analysis, \bf14~\rm(2009), 577--750.

\bibitem{Simon}
Simon,~J.: \emph{Compact sets in the space $L^p(0, T; B)$}, Ann. Mat. Pure Appl., \bf146~\rm(1987), 65--96.

\bibitem{TACHIM}
Tachim-Medjo,~T.: \emph{On the uniqueness of $z$-weak solutions of the three-dimensional primitive equations of the ocean}, Nonlinear Anal. Real World Appl., \bf11~\rm(2010), 1413--1421.

\bibitem{TEMNSBOOK}
Temam,~R: \emph{Navier-Stokes Equations. Theory and Numerical Analysis}, Revised edition, Studies in Mathematics and its Applications, 2., North-Holland Publishing Co., Amsterdam-New York, 1979.

\bibitem{TZ04}
Temam,~R., Ziane,~M.: \emph{Some mathematical problems in geohpysical fluid dynamics},
Elsevier: Handbook of Mathematical Fluid Dynamics, \bf3~\rm(2004), 535--657.

\bibitem{VALLIS}
Vallis,~G.~K.: \emph{Atmospheric and Oceanic Fluid Dynamics,} Cambridge Univ. Press, 2006.

\bibitem{WP}
Washington,~W.~M., Parkinson,~C.~L.: \emph{An Introduction to Three Dimensional Climate Modeling}, Oxford University Press, Oxford, 1986.

\bibitem{TKW}
Wong,~T.~K.: \emph{Blowup of solutions of the hydrostatic Euler equations.}, Proc. Amer. Math. Soc., \bf143~\rm(2015), 1119--1125.

\bibitem{ZENG}
Zeng,~Q.~C.: \emph{Mathematical and Physical Foundations of Numerical Weather Prediction,} Science Press, Beijing, 1979.
\end{thebibliography}
\end{document}